\newcommand{\be}{\begin{equation}}
\newcommand{\ee}{\end{equation}}
\newcommand{\bea}{\begin{eqnarray}}
\newcommand{\eea}{\end{eqnarray}}
\newcommand{\bel}{\begin{align}}
\newcommand{\eel}{\end{align}}
\newcommand{\nno}{\nonumber}
\newcommand{\ujump}[1]{\lfloor #1\rfloor}                        
\renewcommand{\k}{\kappa}
\newcommand{\ud}{\,\mathrm{d}}
\newcommand{\ndg}[1]{| \kern -.25mm \|{#1}| \kern -.25mm \|}
\newcommand{\ncdg}[1]{| \kern -.25mm \|{#1}| \kern -.25mm \|_{\rm DG}}
\newcommand{\nsdg}[1]{| \kern -.25mm \|{#1}| \kern -.25mm \|_{\rm s}}
\newcommand{\nstdg}[1]{| \kern -.25mm \|{#1}| \kern -.25mm \|_{L_2 (J; \mathcal{D} )}}
\newcommand{\ltwo}[2]{\|{#1}\|_{{#2}}}
\newcommand{\no}{[\kern -.8mm [}
\newcommand{\nc}{]\kern -.8mm ]}
\newcommand{\norm}[2]{\| {#1} \|_{#2}}
\begin{document}

\title{On the exponent of  exponential convergence  of   {$\boldsymbol{\lowercase{p}}$-version FEM} spaces
}
\subtitle{}

\titlerunning{On the exponent of  exponential convergence  of   {$p$-version FEM} spaces}        

\author{Zhaonan Dong }

\authorrunning{Z. Dong} 

\institute{Z. Dong \at
	Department of Mathematics, University of Leicester,   Leicester, United Kingdom\\              
              \email{zd14@le.ac.uk}           
}

\date{Received: date / Accepted: date}

\maketitle

\begin{abstract}
We study the exponent of the exponential rate of convergence in terms of the   number of degrees of freedom for various non-standard   {$p$-version} finite element spaces employing  reduced cardinality basis. 
More specifically, we show that  serendipity finite element methods  and  discontinuous Galerkin finite element methods  with  total degree $\mathcal{P}_p$ basis have a faster   exponential convergence with respect to  the number of degrees of freedom than their counterparts employing  the tensor product $\mathcal{Q}_p$ basis for quadrilateral/hexahedral elements, for piecewise  analytic problems under $p$-refinement. The above results are proven by using a new $p$-optimal error bound for  the $L^2$-orthogonal projection onto the total degree $\mathcal{P}_p$ basis, and for  the  $H^1$-projection onto the serendipity finite element space  over  tensor product elements with dimension $d\geq2$. These new $p$-optimal error bounds  lead to a larger exponent of the exponential rate of convergence  with respect to the   number of degrees of freedom. Moreover, these results show that part of the basis functions in $\mathcal{Q}_p$ basis  {plays} no roles  in achieving the $hp$-optimal error bound in the Sobolev space. The sharpness of theoretical results is also  verified  by a series of numerical examples.

\keywords{$hp$-finite element method; discontinuous Galerkin   method; serendipity basis; $\mathcal{P}_p$ basis;  reduced cardinality basis; exponential convergence. }
\subclass{65N30 \and  65N15\and  65N50.}
\end{abstract}


\section{Introduction}

Polynomial approximation on tensor product domains plays an important role in deriving the exponential rate of convergence  with respect to the number of degrees of freedom for $hp$-version finite element methods (FEMs) \cite{gui1986h,guo1986hp1,guo1986hp2,MR946376,MR954788,MR1301344,MR3351174,schwab,melenk1999hp} and $hp$-version discontinuous Galerkin finite element methods (DGFEMs)   \cite{hss,newpaper,Wihler2002,schotzau2013hp1,schotzau2013hp2,schotzau2016hp}.   In general, the proof of  the exponential rate of convergence   usually depends on the $hp$-approximation results for  some suitable projection operators onto a  local polynomial space consisting of polynomials with degree less or equal than $p$ in  each variable (known as ${\cal Q}_p$ basis) over a tensor product element (quadrilateral/hexahedral elements), for dimension $d\geq 2$. 

The key reason for using the ${\cal Q}_p$ basis over a tensor product element is because  $hp$-optimal  approximation results for the multi-dimensional  projection operators can be derived by using the stability and approximation results of the one-dimensional  projections  via tensor product arguments.  On the other hand, the $hp$-approximation results for $L^2$-orthogonal projections onto  polynomial basis with total degree less or equal than $p$ (${\cal P}_p$ basis) and $H^1$-projections onto serendipity basis (${\cal S}_p$ basis) have not been fully explored. Typically, $hp$-error bounds for  projections  onto the  ${\cal P}_p$  or   ${\cal S}_p$ basis  {have} been  derived using the fact that there exists a $q \leq p$ such that   the bases ${\cal P}_p$ or ${\cal S}_p$ contain ${\cal Q}_q$ as a subset, together with the help of the $hp$-optimal  approximation results for the projections onto the basis  ${\cal Q}_q$, see Corollary 4.52  in\cite{schwab}.

For instance, we consider the $L^2$--norm error bound of the two-dimensional $L^2$-orthogonal projection $\Pi_{{\cal Q}_p}$ onto the ${\cal Q}_p$ basis   as an example, cf. \cite{canuto1982approximation,newpaper}.  Let $\hat{\k}=(-1,1)^2$ and   $u\in H^{l}(\hat{\k})$, $l$ is an integer with $l\geq 0$. Then, the following estimate holds,
\begin{equation}\label{sec1: ex1}
\norm{u -  \Pi_{{\cal Q}_p}u }{L^2({\hat{\kappa}})}^2   \leq   C(s)( {p+1})^{-2s}
|u|_{H^{s}(\hat{\kappa})}^2,
\end{equation}
where the constant $C(s)$ is independent of $p$ and   $0\leq s  \leq \min\{p+1,l \}$. 
It is straightforward to see that   the above error bound is sharp in the sense that it is $p$-optimal in both Sobolev regularity index $l$ and polynomial approximation order $p$.

Next, we consider the $L^2$--norm error bound of $L^2$-orthogonal projection $\Pi_{{\cal P}_p}$  onto the  ${\cal P}_p$ basis. Following the Lemma 6 in \cite{MR1974174}, we define $\Pi_{{\cal P}_p} = \Pi_{{\cal Q}_{\ujump{p/2}}}$, with $\ujump{p/2}$ denoting the  largest integer which is less than or equal to $p/2$. Then, the following bound holds:
\begin{equation}\label{sec1: ex2}
\norm{u - \Pi_{{\cal P}_{p}} u }{L^2({\hat{\kappa}})}^2=\norm{u - \Pi_{{\cal Q}_{\ujump{p/2}}} u }{L^2({\hat{\kappa}})}^2   \leq   \tilde{C}(s)( {\ujump{p/2}+1})^{-2s}
|u|_{H^{s}(\hat{\kappa})}^2
,
\end{equation}
where the constant $\tilde{C}(s)$ is independent of $p$ and   $0\leq s \leq \min\{\ujump{p/2}+1,l \}$.  {We emphasize that  \emph{for function $u\in H^l(\hat{\k})$, with $p$ sufficiently large, the above error bound is $p$-optimal because $s = l$. However, if function $u$ is sufficiently smooth or even analytic, then the above error bound is $p$-suboptimal by at least ${p/2}$ orders because $s  = {\ujump{p/2}+1}$. } The similar $p$-suboptimal error bound holds for $H^1$-projections onto  ${\cal S}_p$ basis. }


Using the $p$-suboptimal error bound for $L^2$-orthogonal projections onto the ${\cal P}_p$ basis and $H^1$-projections onto  the ${\cal S}_p$ basis, it is  possible to derive an exponential rate of convergence for $hp$-FEMs employing the ${\cal S}_p$ basis  and $hp$-DGFEMs employing the ${\cal P}_p$ basis, \emph{but the resulting exponent   is much smaller with respect to the number of degrees of freedom than the exponent of  FEMs and DGFEMs employing the ${\cal Q}_p$ basis.} This contradicts the numerical observation in  work \cite{cangiani2013hp,cangiani2015hp,cangiani2016hp,Zhaonan}, where it is observed that  the error with respect to  the number of degrees of freedom for DGFEMs with the ${\cal P}_p$ basis on tensor product elements
has a steeper exponential convergence compared to  DGFEMs with the ${\cal Q}_p$ basis, for sufficiently smooth problems. This situation has been numerically tested on many different examples. We also observed numerically  that the ratio of the slope of the exponential error decay for DGFEMs with  the ${\cal P}_p$ basis compared to that of the ${\cal Q}_p$ basis depends only on the space dimension.  The same phenomenon is also observed when comparing  conforming FEMs with the ${\cal S}_p$ basis and the  ${\cal Q}_p$ basis. 

The disagreement  between the numerical observations  and theoretical results implies that the error bound \eqref{sec1: ex2} is not a sharp bound for  ${\cal P}_p$ and ${\cal S}_p$ basis. To address this, in this work, we  derive an $hp$-optimal  error bound for the $L^2$-orthogonal projection onto the ${\cal P}_p$ basis in the $L^2$--norm, and   for  the $H^1$-projection onto the ${\cal S}_p$ basis in the $L^2$--norm and $H^1$--seminorm.

The technique for proving the new error  bounds is different from the existing techniques for $hp$-approximation with the ${\cal Q}_p$ basis, due to the lack of a  tensor product structure in the ${\cal P}_p$  and  ${\cal S}_p$ bases, thereby hindering  the  use of the usual  tensor product arguments. The key tools used in this work are: a multi-dimensional orthogonal polynomial expansion and  the careful selection of basis functions.  To the best author's knowledge, the new error bounds for both projections never appeared in the literatures.  The resulting bounds are \emph{$hp$-optimal} with respect to both Sobolev regularity  and polynomial approximation order.  Moreover, it also shows that  the ${\cal Q}_p$ basis contains in a sense  ``extra'' basis functions that are
unnecessary for optimal convergence. These basis functions  do not increase the order in $p$ of the error bound, but instead only reduce its ``constant".


 By using the new $hp$-optimal error bound for the  $L^2$-orthogonal projection onto the ${\cal P}_p$ basis and the $H^1$-projection onto the ${\cal S}_p$ basis, we can prove that methods using  ${\cal P}_p$ and  ${\cal S}_p$ bases offer exponential convergence with a larger exponent with respect to  the number of degrees of freedom than comparable methods using  ${\cal Q}_p$ basis for piecewise analytic problem under $p$-refinement.   Furthermore,  the approximation results also show that there are a lot of basis functions in ${\cal Q}_p$ basis  with no roles in improving the $hp$-optimal error bound, which can be generalized to  other FEM with the local polynomial space employing reduced cardinality basis.     {Finally, we emphasize that we are using DGFEM employing ${\cal P}_p$ basis for quadrilateral and hexahedral elements  also and this is the key novelty of the approach, since this is possible for DGFEM and essentially for serendipity spaces. }

The remainder of this work is structured as follows. In Section \ref{weighted Sobolev spaces}, we introduce the required  notation and  the weighted Sobolev spaces together with some properties about the orthogonal polynomials. Then, the $p$-optimal error bound for  the $L^2$-orthogonal projection onto the ${\cal P}_p$ basis in $L^2$--norm is proved in Section 3.   In Section 4, we derive the $p$-optimal error bound for  $H^1$-projection onto the ${\cal S}_p$ basis in both  $L^2$--norm and $H^1$--seminorm. Section 5 is devoted to deriving  the exponential rate of convergence for the  $L^2$- and the $H^1$-projections employing different local polynomial bases.  The sharpness of the approximation results  is  verified through a series of numerical examples in Section 6.  

\section{Preliminaries}  \label{weighted Sobolev spaces}




\subsection{Notation}
We employ the multi-indices  ${i}=(i_1, \dots, i_d) $, and ${\alpha}=(\alpha_1,  \dots, \alpha_d)$, where each component is  non-negative.   We denote by $|\cdot|$   the $l_1$--norm of the multi-index ${i}$, with $|{i}| = \sum_{j=1}^d |i_k|$. Further, for multi-indices, the relation $ {i}\geq {\alpha}$ means that $ i_k\geq \alpha_k $ for all $ k=1,\dots,d$.  

Next, we define the following shorthand notation for the summations of indices. For   multi-indices   $i$ and $\alpha$  satisfying  $i \geq \alpha$ we define
$
\sum_{i\geq \alpha}^\infty:=
\sum_{i_1 = \alpha_1}^\infty   \dots  \sum_{i_d = \alpha_d}^\infty
$
and the summation for  multi-indices $i$ satisfying  $|i|\geq p$ is defined as 
$
\sum_{ |i| = p}^\infty.
$
Moreover, we also define a  summation for a multi-index $i$ satisfying multiple conditions, e.g. multi-index $i$ satisfying the  condition $ i \geq \alpha $ and  the condition $|i|\geq p$ is defined  as 
$
\sum_{|i| = p, i\geq \alpha}^\infty.
$

We introduce a function $\Phi_d(m,n)$ which will be used frequently in this work, given by
\begin{align} \label{Gamma-function}
\Phi_d(m,n) =\Big( \frac{\Gamma(\frac{m-n}{d}+1)}{\Gamma(\frac{m+n}{d}+1)} \Big)^d,
\end{align}
where $\Gamma$ is the Gamma function satisfying $\Gamma(n+1)=n!$ for integer  $n\geq 0$.

\subsection{Weighted Sobolev spaces}


For the reference element $\hat{\k}:=(-1,1)^d$, let 
$
W^{\alpha} ( {x}) = \prod_{k=1}^d W_k({x}_k) ^{\alpha_k},
$
where the weight function 
$
W_k(x_k):=(1-x^2_k)^{1/2},
$
 for $k=1,\dots,d$,  and   $\alpha_k \geq0$ are integers.
 
Next, we define the weighted Sobolev spaces $V^{l}(\hat{\k})$ as a closure of $C^\infty(\hat{\k})$  in the norm with the   weights $W^\alpha$, defined by
\begin{equation}
\norm{u}{V^{l}(\hat{\k})}^2 = \sum_{|\alpha|=0}^l  |u|_{V^{l}(\hat{\k})}^2, \quad \text{and}\quad  |u|_{V^{l}(\hat{\k})}^2 = \sum_{|\alpha|=l}  \norm{ W^{\alpha} D^{ \alpha} u}{L^2(\hat{\k})}^2.
\end{equation}
It is easy to see that  $|u|_{V^{l}(\hat{\k})}\leq |u|_{H^{l}(\hat{\k})}$, $\forall u\in H^{l}(\hat{\k})$, with some integer $l \geq0$. We note that the above definition for  weighted Sobolev spaces can be extended to the fractional order weighted Sobolev spaces and weighted Besov spaces  by using the real  interpolation  techniques, cf. \cite{MR0482275}.  

For $u\in L^2(\hat{\kappa})$,  we introduce the Legendre polynomial  expansion over the reference element $\hat{\k}$, given by 
$
u(x) = \sum_{|i| = 0}^\infty a_{i}\prod_{k=1}^d L_{i_k}(x_k),
$
where $x=(x_1,\dots,x_d)$, and  $L_{i_k}({x}_k)$  denotes the Legendre polynomial with order $i_k$ over the variable $x_k$. The coefficients  $a_{i}$ are defined by  
\begin{align} \label{ch7:coefficients}
a_{i} = \int_{\hat{\k}} u({x}) \prod_{k=1}^d   \frac{2i_k+1}{2} L_{i_k}({x}_k) \ud {x}.
\end{align}
The derivatives of the function $u$ can be expressed as
\begin{align} \label{expansion-derivative}
\hspace{0.5cm}
D^\alpha u(x) 
= \sum_{ i \geq \alpha }^\infty  a_{i}\prod_{k=1}^d  L_{i_k}^{(\alpha_k)}({x}_k).
\end{align}
 The derivatives of the Legendre polynomials satisfy the orthogonality property
\begin{align} \label{orthogonal-relation} 
\int_{-1}^1 (1-\xi^2)^k L^{(k)}_i(\xi)  L^{(k)}_j(\xi)  \ud \xi = \frac{2 \delta_{ij}}{2i+1} \frac{\Gamma(i+k+1)}{\Gamma(i-k+1)},
\end{align}
see  \cite[Lemma 3.10]{schwab}. By employing  \eqref{orthogonal-relation}, we have 
\begin{align} \label{weighted-seminorm}
\norm{ W^{\alpha} D^{ \alpha} u}{L^2(\hat{\k})}^2 = 
\sum_{ i \geq \alpha }^\infty
|a_{i}|^2 \prod_{k=1}^d  \frac{2}{2i_k+1} \frac{\Gamma(i_k+\alpha_k +1)}{\Gamma(i_k-\alpha_k +1)}.
\end{align}
Identity  \eqref{weighted-seminorm} establishes a link between the derivatives of the functions in the weighted $L^2$--norms and their Legendre polynomial expansions.
\begin{remark}
The weighted Sobolev space in the above definition is a special case of the general Jacobi-weighted Sobolev spaces introduced  in \cite{babuska2002direct_part1}.
The key reason to introduce the Jacobi-weighted Sobolev spaces is to  deal with the loss of orthogonality suffered by  orthogonal polynomials in standard Sobolev spaces; the $L^2$-orthogonality is  preserved in Jacobi-weighted Sobolev spaces. As we shall see in the forthcoming analysis, orthogonality plays a key role in deriving  optimal error bounds in the polynomial order $p$.
\end{remark}


\section{The $L^2$-orthogonal projection operator  onto the ${\cal P}_p$ basis} 

In this section, we derive an $hp$-optimal error bound for  the $L^2$-orthogonal projection over the reference element $\hat{\k}:=(-1,1)^d$. 

\subsection{The $L^2$-orthogonal projection operator}
For the reference element $\hat{\k}$, we define ${\cal P}_p(\hat{\k})$ and ${\cal Q}_p(\hat{\k})$ be the space of all polynomials with total degree less than or equal to $p$ and with seperate degree less than or equal to $p$, respectively. 

In order to distinguish the same projections onto spaces with different polynomial basis, we  use subscripts to signify the basis type: we use $\Pi_{{\cal Q}_p}:=\Pi_p^{(1)}\Pi_p^{(2)}\dots\Pi_p^{(d)}$ to denote the  $L^2$-projection   onto  ${\cal Q}_p$, which is constructed by using tensor product arguments together with the one dimensional  $L^2$-projection with respect to variable $x_k$, given by   $\Pi_p^{(k)}$. On the other hand, the $L^2$-projection  onto ${\cal P}_p$ is denoted by $\Pi_{{\cal P}_p}$. 

First, we have the following $hp$-optimal approximation result for the $L^2$-orthogonal projection $\Pi_{{\cal Q}_p}$, c.f. \cite[Lemma 3.4]{newpaper}.

\begin{lemma}\label{L2-Q-basis}
Let $\hat{\kappa}=(-1,1)^d$. Suppose  that $u \in H^{l}(\hat{\kappa})$,
for some interger $l\ge 0$. Let $\Pi_{{\cal Q}_p} u$ be the $L^2$-projection of $u$ onto $\mathcal{Q}_{p}(\hat{\kappa})$ with $p\geq 0$. Then, for any integer $s$, with $0 \leq s \leq \min\{p+1, l\}$, and $W_k = W_k({x}_k)$, we have:
\begin{align}\label{approxL2-Q}
\norm{u -  \Pi_{{\cal Q}_{p}}u }{L^2({\hat{\kappa}})}^2  \le   
\Phi_1(p+1,s)
\Big( \sum_{k=1}^d  \ltwo{W^s_k D^s_k u}{\hat{\kappa}}^2  \Big) \leq 
 \Phi_1(p+1,s)
 |u|_{H^{s}(\hat{\kappa})}^2 , 
\end{align}
where $ \Phi_1(p+1,s)$ is defined in \eqref{Gamma-function}.
\end{lemma}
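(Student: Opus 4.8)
The plan is to reduce everything to the Legendre coefficient representation set up in the preliminaries and then carry out a term-by-term comparison between the $L^2$ error and the weighted seminorms. First I would observe that since $\Pi_{{\cal Q}_p}=\Pi_p^{(1)}\cdots\Pi_p^{(d)}$ is the tensor product of the one-dimensional $L^2$-projections, and each $\Pi_p^{(k)}$ simply truncates the Legendre expansion in the variable $x_k$ at degree $p$, the image $\Pi_{{\cal Q}_p}u$ retains exactly those terms of the expansion $u=\sum_{|i|=0}^\infty a_i\prod_k L_{i_k}$ whose multi-index satisfies $i_k\le p$ for every $k$. Consequently the error consists precisely of the terms for which $\max_k i_k\ge p+1$, and the $L^2$-orthogonality of the Legendre polynomials (the $k=0$ case of \eqref{orthogonal-relation}) gives
\[
\ltwo{u-\Pi_{{\cal Q}_p}u}{\hat{\kappa}}^2=\sum_{\max_k i_k\ge p+1}|a_i|^2\prod_{l=1}^d\frac{2}{2i_l+1}.
\]

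Next I would bound this sum by splitting the index set through the inclusion $\{i:\max_k i_k\ge p+1\}\subseteq\bigcup_{k=1}^d\{i:i_k\ge p+1\}$. Since every summand is non-negative, this yields
\[
\ltwo{u-\Pi_{{\cal Q}_p}u}{\hat{\kappa}}^2\le\sum_{k=1}^d\sum_{i_k\ge p+1}|a_i|^2\prod_{l=1}^d\frac{2}{2i_l+1}.
\]
The aim is then to recognise the inner $k$-th sum as a constant multiple of the weighted seminorm $\ltwo{W_k^s D_k^s u}{\hat{\kappa}}^2$, which by \eqref{weighted-seminorm}, applied with the multi-index $\alpha$ having $k$-th component equal to $s$ and all other components $0$, equals
\[
\ltwo{W_k^s D_k^s u}{\hat{\kappa}}^2=\sum_{i_k\ge s}|a_i|^2\,\frac{2}{2i_k+1}\,\frac{\Gamma(i_k+s+1)}{\Gamma(i_k-s+1)}\prod_{l\ne k}\frac{2}{2i_l+1}.
\]

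The crux of the argument, and the step I expect to be the main obstacle, is the factor-by-factor comparison of the two integrands in the $k$-th index. For fixed $k$ and any $i_k\ge p+1$ it suffices to show $1\le\Phi_1(p+1,s)\,\Gamma(i_k+s+1)/\Gamma(i_k-s+1)$, equivalently that the ratio $g(m):=\Gamma(m-s+1)/\Gamma(m+s+1)$ satisfies $g(i_k)\le g(p+1)=\Phi_1(p+1,s)$. This reduces to the monotonicity claim that $g$ is non-increasing in $m$ for $m\ge s$, which follows by writing $g(m)=1/\big[(m+s)(m+s-1)\cdots(m-s+1)\big]$ as the reciprocal of a product of $2s$ consecutive positive integers; the hypothesis $s\le p+1\le i_k$ guarantees that all arguments are legitimate non-negative integers. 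Care is needed to track that this charging is performed against the factor of the seminorm integrand carrying the Gamma ratio, while the remaining factors $2/(2i_k+1)$ and $\prod_{l\ne k}2/(2i_l+1)$ match identically on both sides.

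With the monotonicity in hand, each inner sum is bounded by $\Phi_1(p+1,s)$ times the full seminorm sum, where enlarging the index range from $i_k\ge p+1$ to $i_k\ge s$ only adds non-negative terms, so that $\sum_{i_k\ge p+1}(\cdots)\le\Phi_1(p+1,s)\,\ltwo{W_k^s D_k^s u}{\hat{\kappa}}^2$. Summing over $k=1,\dots,d$ produces the first inequality of \eqref{approxL2-Q}. The final inequality is then immediate: the pure derivatives $D_k^s u$ form a subset of all derivatives $D^\alpha u$ with $|\alpha|=s$, and the weights satisfy $W_k\le1$ on $\hat{\kappa}$, whence $\sum_{k=1}^d\ltwo{W_k^s D_k^s u}{\hat{\kappa}}^2\le\sum_{|\alpha|=s}\ltwo{W^\alpha D^\alpha u}{\hat{\kappa}}^2=|u|_{V^s(\hat{\kappa})}^2\le|u|_{H^s(\hat{\kappa})}^2$, which closes the argument.
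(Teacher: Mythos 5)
Your proof is correct and follows essentially the same route as the paper: the paper's one-line proof invokes the tensor-product structure of $\Pi_{{\cal Q}_p}$ together with orthogonality and stability of the one-dimensional $L^2$-projections, and your union bound over the index sets $\{i:i_k\ge p+1\}$ combined with Parseval is precisely that argument written out at the level of Legendre coefficients. The Gamma-ratio monotonicity you verify is exactly the content of the one-dimensional approximation estimate, and the needed hypothesis $s\le p+1\le i_k$ is correctly tracked throughout.
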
 
\begin{proof}
The result is proved  by modifying the proof of  Lemma 3.4 in \cite{newpaper}. Instead of using triangle inequality, we use   the orthogonality and stability of the one-dimensional  $L^2$-orthogonal projection, which leads to the   error bound  \eqref{approxL2-Q}.  
\end{proof}

We remark on the asymptotic behaviour of the Gamma function. Making use of  sharp double side inequalities for the Gamma function, see  Theorem 1.6. in \cite{Batir2008}, for  all positive real numbers $x\geq1$, we have  
\bea\label{Stirling's formula}
\sqrt{2\pi }  x^{x+\frac{1}{2}} e^{-x}\leq \Gamma(x+1) \leq e  x^{x+\frac{1}{2}} e^{-x},
\eea
and it follows 
\bea\label{Gamma}
\Phi_d(p+1,s) \leq C(s) \Big(\frac{d}{p+1}\Big)^{2s},
\eea
with $0\leq  s\leq \min\{p+1, l\}$ and $C(s)$ depending on the constant  $s$ only. This  implies that the error bound \eqref{approxL2-Q} is optimal in $p$ with respect to both  the Sobolev regularity index $l$ and polynomial order $p$. In fact, by modifying the proof of Theorem 6.2 in \cite{MR3556398},  it is can be  shown that the constant $C(s)=(\frac{e}{2})^{2s}$.


Next, we introduce a useful lemma which is the key tool in proving the optimal error bounds in $p$. The proof of the lemma is postponed until Section \ref{Proof of Lemma}.

\begin{lemma}\label{optimization}
Let  $\xi =  (\xi_1,\xi_2, \dots,\xi_d)$ and $\rho=(\rho_1,\rho_2, \dots,\rho_d)$ be two  non-negative integer valued vectors with  $\rho \geq \xi$,   satisfying  $|\rho| = M$, $|\xi| = m$ for $M,m\in \mathbb{N}$. Then,  we  have the (global) upper bound
\begin{align}\label{nice-bounds}
F(\xi,\rho) := \prod_{k=1}^d \frac{\Gamma(\rho_k-\xi_k+1)}{\Gamma(\rho_k+\xi_k+1)} \leq 
\Phi_d(M,m).
\end{align}
Furthermore, the  maximum value of $F(\xi,\rho)$ under the above  constraints on $\xi$ and $\rho$  is attained  at  $\xi_k = m/d$, $\rho_k = M/d$, $k=1,\dots, d$.
\end{lemma}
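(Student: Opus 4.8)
The plan is to prove \eqref{nice-bounds} by relaxing $\xi$ and $\rho$ to real‑valued vectors, locating the extremum of $F$ on the feasible set by Lagrange multipliers, and recovering the discrete case afterwards by monotonicity. Writing $G:=\log F=\sum_{k=1}^d\big(\log\Gamma(\rho_k-\xi_k+1)-\log\Gamma(\rho_k+\xi_k+1)\big)$ and imposing the two linear constraints $\sum_k\xi_k=m$ and $\sum_k\rho_k=M$, the stationarity conditions read $-\psi(\rho_k-\xi_k+1)-\psi(\rho_k+\xi_k+1)=\lambda$ and $\psi(\rho_k-\xi_k+1)-\psi(\rho_k+\xi_k+1)=\mu$ for every $k$, where $\psi=(\log\Gamma)'$ is the digamma function and $\lambda,\mu$ are the multipliers. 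Adding and subtracting these two families shows that $\psi(\rho_k+\xi_k+1)$ and $\psi(\rho_k-\xi_k+1)$ are independent of $k$; since $\psi$ is strictly increasing it is injective, so both $\rho_k+\xi_k$ and $\rho_k-\xi_k$ must be constant in $k$. Together with the constraints this forces the unique interior stationary point $\xi_k=m/d$, $\rho_k=M/d$, and substituting it into $F$ and comparing with the definition \eqref{Gamma-function} yields exactly $\Phi_d(M,m)$, which is the candidate extremizer asserted in the lemma.

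To convert this stationary point into the bound, I would use the change of variables $u_k:=\rho_k-\xi_k\ge 0$ and $v_k:=\rho_k+\xi_k\ge u_k$, under which $G=\sum_k\log\Gamma(u_k+1)-\sum_k\log\Gamma(v_k+1)$ with the decoupled constraints $\sum_k u_k=M-m$ and $\sum_k v_k=M+m$. The key analytic tool is the logarithmic convexity of $\Gamma$, equivalently $\psi'>0$: by Jensen's inequality the $v$‑sum satisfies $\sum_k\log\Gamma(v_k+1)\ge d\,\log\Gamma(\frac{M+m}{d}+1)$ with equality precisely at the balanced point, and this is what ties the extremal value of $G$ to $\log\Phi_d(M,m)$. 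The case $d=1$ reduces to the identity $F=\Phi_1(M,m)$ and serves as a sanity check and as the base of any coordinatewise reduction.

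The main obstacle is the extremality analysis, because the Hessian of $G$ is not sign‑definite: in the $(u,v)$ variables $G$ is convex in $u$ but concave in $v$, so the balanced configuration is a saddle of the unconstrained problem and the genuine extremum over the closed feasible region can a priori migrate to the boundary faces $\rho_k=\xi_k$ or $\xi_k=0$. The delicate step is therefore to certify that, on the constraint geometry actually used in the error analysis of the $\mathcal{P}_p$ projection, where $\rho$ plays the role of the expansion index and $\xi$ of the freely chosen differentiation multi‑index, the balanced configuration is the governing one. I would do this by pairing the Jensen estimate for the $v$‑sum with the monotonicity in $t$ of the scalar ratio $\Gamma(t-\xi+1)/\Gamma(t+\xi+1)$ to exclude any gain along the boundary faces, and finally by a standard monotone‑interpolation argument to pass from the continuous extremizer back to admissible integer multi‑indices, since $m/d$ and $M/d$ need not be integral.
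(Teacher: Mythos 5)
Your Step~1 (Lagrange multipliers, monotonicity of the digamma function, the balanced stationary point with value $\Phi_d(M,m)$) coincides with the paper's first step and is correct. The gap is exactly where you locate it, and your sketch does not close it. In the decoupled variables $u_k=\rho_k-\xi_k$, $v_k=\rho_k+\xi_k$, Jensen's inequality for the convex function $\log\Gamma$ does control the denominator, $\prod_k\Gamma(v_k+1)\ge\Gamma(\tfrac{M+m}{d}+1)^d$, but for the numerator convexity points the wrong way: $\sum_k\log\Gamma(u_k+1)$ subject to $\sum_k u_k=M-m$ is maximized at a vertex of the simplex, not at its barycenter, so the decoupled estimate only yields $F\le\Gamma(M-m+1)\,\Gamma(\tfrac{M+m}{d}+1)^{-d}$, which is strictly weaker than $\Phi_d(M,m)$. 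Everything therefore rests on the deferred claim that a monotonicity argument "excludes any gain along the boundary faces", and no soft argument of that kind can succeed: for $d=2$, $M=2$, $m=1$, the admissible boundary point $\xi=(1,0)$, $\rho=(1,1)$ gives $F=\Gamma(1)/\Gamma(3)=\tfrac12$, whereas $\Phi_2(2,1)=\bigl(\Gamma(3/2)/\Gamma(5/2)\bigr)^2=\tfrac49$. So the boundary value genuinely exceeds the interior stationary value unless $M$ is large, and any correct completion must be quantitative and asymptotic in $M$ rather than a pointwise exclusion.

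For comparison, the paper treats the boundary by induction on dimension: on a face where some $\xi_k=0$ and $\rho_k=\xi_k$, the optimization reduces to the same problem in $d-1$ variables with extremal value $\Phi_{d-1}(M,m)$, and the argument is closed by the separate inequality $\Phi_{d-1}(M,m)\le\Phi_d(M,m)$, which is itself established via Stirling-type two-sided bounds and only for $M$ sufficiently large. That cross-dimensional comparison of the $\Phi_n(M,m)$ is the missing ingredient in your plan; the Jensen step for the $v$-variables is a clean way to handle the denominator but cannot substitute for it. (The final "monotone interpolation back to integers" step you mention is not needed: a bound valid over real feasible points automatically dominates the integer case.)
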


\begin{theorem} \label{L2-projector-pbasis}
Let $\hat{\kappa}=(-1,1)^d$. Suppose  that $u \in H^{l}(\hat{\kappa})$,
for some integer $l\ge 0$. Let $\Pi_{{\cal P}_p} u$ be the $L^2$-projection of $u$ onto $\mathcal{P}_{p}(\hat{\kappa})$ with $p\geq 0$. Then, for any integer $s$, $0 \leq s \leq \min\{p+1, l\}$, we have:
\begin{align}\label{approxL2-P}
\norm{u -  \Pi_{{\cal P}_p}u }{L^2({\hat{\kappa}})}^2  \le
\Phi_d(p+1,s)
|u|_{V^{s}(\hat{\kappa})}^2 \leq   C(s) \Big( \frac{d}{p+1}\Big)^{2s}
|u|_{H^{s}(\hat{\kappa})}^2.
\end{align}
where $ \Phi_d(p+1,s)$ is defined in \eqref{Gamma-function}.
\end{theorem}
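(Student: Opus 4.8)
The plan is to use the fact that $\Pi_{\mathcal{P}_p}$ acts as a pure truncation of the multivariate Legendre series. Since $\prod_{k=1}^d L_{i_k}(x_k)$ has total degree $|i|$, the space $\mathcal{P}_p(\hat{\kappa})$ is spanned by exactly those tensor-product Legendre polynomials with $|i|\le p$; hence $\Pi_{\mathcal{P}_p}u=\sum_{|i|=0}^{p}a_i\prod_{k=1}^d L_{i_k}(x_k)$ and the error retains precisely the tail $|i|\ge p+1$. By the $L^2$-orthogonality of the Legendre polynomials I immediately obtain the identity
\[
\norm{u-\Pi_{\mathcal{P}_p}u}{L^2(\hat{\kappa})}^2=\sum_{|i|=p+1}^\infty |a_i|^2\prod_{k=1}^d\frac{2}{2i_k+1}.
\]
This is the starting point; the remainder of the argument is a coefficient-by-coefficient comparison between this tail and the weighted seminorm identity \eqref{weighted-seminorm}.

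Next I would compare, for a fixed tail index $i$, the error coefficient $\prod_{k=1}^d\frac{2}{2i_k+1}$ with the seminorm coefficient $\prod_{k=1}^d\frac{2}{2i_k+1}\frac{\Gamma(i_k+\alpha_k+1)}{\Gamma(i_k-\alpha_k+1)}$ associated with a multi-index $\alpha$ satisfying $|\alpha|=s$ and $\alpha\le i$. Their quotient is exactly $\prod_{k=1}^d\frac{\Gamma(i_k-\alpha_k+1)}{\Gamma(i_k+\alpha_k+1)}=F(\alpha,i)$ in the notation of Lemma \ref{optimization}, which therefore yields $F(\alpha,i)\le\Phi_d(|i|,s)$. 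Because $s\le p+1\le|i|$, at least one admissible $\alpha$ (with $|\alpha|=s$, $\alpha\le i$) always exists---one may distribute the $s$ units greedily among the components subject to $\alpha_k\le i_k$---so the comparison is never vacuous.

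It then remains to pass from $\Phi_d(|i|,s)$ to $\Phi_d(p+1,s)$ and to sum. Monotonicity of $\Phi_d(\cdot,s)$ in its first argument follows since the digamma function $\psi$ is increasing: the map $M\mapsto\Gamma(\tfrac{M-s}{d}+1)/\Gamma(\tfrac{M+s}{d}+1)$ has logarithmic derivative $\tfrac1d(\psi(\tfrac{M-s}{d}+1)-\psi(\tfrac{M+s}{d}+1))<0$, so $\Phi_d(|i|,s)\le\Phi_d(p+1,s)$ whenever $|i|\ge p+1$. Fixing one admissible $\alpha$ for each tail index $i$, I obtain $|a_i|^2\prod_{k=1}^d\frac{2}{2i_k+1}\le\Phi_d(p+1,s)\,|a_i|^2\prod_{k=1}^d\frac{2}{2i_k+1}\frac{\Gamma(i_k+\alpha_k+1)}{\Gamma(i_k-\alpha_k+1)}$, and since the right-hand factor is a single nonnegative summand of $\sum_{|\alpha|=s,\,\alpha\le i}(\cdots)$, I may enlarge it to the full inner sum. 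Summing over $|i|\ge p+1$ and then extending the outer sum to all $i$ (adding only nonnegative terms) reproduces $\Phi_d(p+1,s)\,|u|_{V^{s}(\hat{\kappa})}^2$ via \eqref{weighted-seminorm}; the concluding estimate then follows from \eqref{Gamma} together with $|u|_{V^{s}(\hat{\kappa})}\le|u|_{H^{s}(\hat{\kappa})}$.

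I expect the main obstacle to be the summation bookkeeping rather than any sharp estimate. The key is to recognise that it suffices to \emph{charge} each tail coefficient (indexed by a single $i$) to just one admissible $\alpha$, instead of attempting to match the full double sum defining $|u|_{V^s(\hat{\kappa})}^2$ term by term; the remaining summands of the seminorm---the other admissible $\alpha$ and the indices with $|i|\le p$---only help, because every coefficient is nonnegative. The two subsidiary facts that must be checked, namely the existence of an admissible $\alpha$ and the monotonicity of $\Phi_d$, both rely on the hypothesis $s\le p+1$.
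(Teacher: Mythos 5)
Your proposal is correct and follows essentially the same route as the paper: truncate the multivariate Legendre expansion, insert the Gamma-quotient factors, invoke Lemma \ref{optimization} to bound $F(\alpha,i)$ by $\Phi_d(\cdot,s)$, and identify the resulting sum with $|u|_{V^s(\hat{\kappa})}^2$ via \eqref{weighted-seminorm}. The only (welcome) difference is that you make explicit two points the paper leaves implicit --- the existence of an admissible $\alpha$ for each tail index $i$ and the monotonicity of $\Phi_d(M,s)$ in $M$, needed to pass from $\Phi_d(|i|,s)$ to $\Phi_d(p+1,s)$ when $|i|>p+1$.
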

\begin{proof}
Using the  relation   \eqref{orthogonal-relation} ,  for any integer $s$, $0 \leq s \leq \min\{p+1, l\}$, we have 
\begin{align}
  \norm{u-\Pi_{{\cal P}_p} u}{L^2(\hat{\k})}^2 &= \sum_{|i| = p+1}^\infty |a_i|^2 \prod_{k=1}^d  \frac{2}{2i_k+1} 
\leq  \sum_{|\alpha|=s} \hspace{0.2cm}   \sum_{  |i| = p+1, i \geq \alpha }^\infty |a_i|^2 \prod_{k=1}^d  \frac{2}{2i_k+1}   \nno \\
& 
 =    \sum_{|\alpha|=s} \hspace{0.2cm}    \sum_{|i| = p+1, i \geq \alpha }^\infty |a_i|^2 \Big( \prod_{k=1}^d  \frac{2}{2i_k+1}  \frac{\Gamma(i_k+\alpha_k +1)}{\Gamma(i_k-\alpha_k +1)} \Big)      \nno \\
 &
 \times
  \Big( \prod_{k=1}^d  \frac{\Gamma(i_k-\alpha_k +1)}{\Gamma(i_k+\alpha_k +1)}  \Big)  \nno \\
&
 \leq 
\Phi_d(p+1,s)
\sum_{|\alpha|=s}   \hspace{0.2cm}  \sum_{|i| = p+1, i \geq \alpha }^\infty |a_i|^2  \prod_{k=1}^d  \frac{2}{2i_k+1}\frac{\Gamma(i_k+\alpha_k +1)}{\Gamma(i_k-\alpha_k+1)}  \nno \\
&
 \leq 
\Phi_d(p+1,s)
\sum_{ |\alpha|=s}  \norm{ W^{\alpha} D^{ \alpha} u}{L^2(\hat{\k})}^2   \nno \\
&
=  
\Phi_d(p+1,s)
|u|_{V^{s}(\hat{\kappa})}^2    \leq  C(s) \Big( \frac{d}{p+1}\Big)^{2s}
|u|_{H^{s}(\hat{\kappa})}^2, \nno 
\end{align}
where in step one, the index set is enlarged; indeed,  some of the terms with multi-index $|i| \geq p+1$ have been used more than once; in step three,  we use Lemma \ref{optimization}, taking  $\xi_k =  \alpha_k \geq 0$, $\rho_k = i_k \geq 0$, $M = p+1$, $m = s$, together with the restriction $0 \leq s \leq \min\{p+1, l\}$; in step four,  we  used  \eqref{weighted-seminorm} and  in the last step the  bound holds from \eqref{Gamma}. 
\end{proof}
\begin{remark}
We point out that the above proof  for the $L^2$-orthogonal projection $\Pi_{{\cal P}_p}$ on $d$-dimensional reference element is a natural extension of the proof for one-dimensional result, see \cite{schwab} for details.

By comparing  the $L^2$--norm  bound \eqref{approxL2-Q} for the projection $\Pi_{{\cal Q}_p}$ and \eqref{approxL2-P} for the projection $\Pi_{{\cal P}_p}$, it is easy to see that both bounds are $p$-optimal with respect to  Sobolev regularity index $l$ and also for polynomial order $p$. Moreover,  we can see that  the bound in \eqref{approxL2-P} will have a larger constant compared to the bound in \eqref{approxL2-Q}, and this constant  depends on the dimension $d$. This result will play a key role in deriving the exponential convergence for the ${\cal P}_p$ basis.  
\end{remark}


\subsection{The Proof of Lemma \ref{optimization}}\label{Proof of Lemma}

The proof will be split into three steps.

{\bf Step 1:}  The proof follows a constrained optimization procedure. We set, 
\bea
L(\xi, \rho, \mu,\lambda) =F(\xi,\rho)  + \mu (|\xi| - m) +\lambda (|\rho| - M),
\eea
and we calculate the stationary points. We consider the partial derivatives with respect to  $\xi_k$ and $\rho_k$, $k=1,\dots,d$,
$$
\frac{\partial L}{\partial \xi_k} =  -\left( \frac{\Gamma^\prime(\rho_k- \xi_k+1)}{\Gamma(\rho_k - \xi_k+1)}+ \frac{\Gamma^\prime(\rho_k+ \xi_k+1)}{\Gamma(\rho_k + \xi_k+1)} \right) F(\xi,\rho)  + \mu =0,
$$
and 
$$
\frac{\partial L}{\partial \rho_k} = \left( \frac{\Gamma^\prime(\rho_k- \xi_k+1)}{\Gamma(\rho_k - \xi_k+1)}- \frac{\Gamma^\prime(\rho_k+ \xi_k+1)}{\Gamma(\rho_k + \xi_k+1)} \right) F(\xi,\rho)  + \lambda =0,
$$
which satisfy the equations
\bea\label{xi-relation}
\frac{\Gamma^\prime(\rho_k-\xi_k+1)}{\Gamma(\rho_k- \xi_k+1)}   =\frac{\mu -\lambda}{2F(\xi,\rho)} \quad  \text{and} \quad \frac{\Gamma^\prime(\rho_k+\xi_k+1)}{\Gamma(\rho_k+ \xi_k+1)}   =\frac{\mu +\lambda}{2F(\xi,\rho)} , 
\eea
with $k=1,\dots,d$,  by using the fact that $F(\xi,\rho) > 0$.  The right-hand sides of the two equations in  \eqref{xi-relation} are independent of the index $k$. Moreover, the function  $\phi(z) = \Gamma(z)^\prime/\Gamma(z)$ is the so-called Digamma function with the property that (see \cite{MR0167642}, (6.3.16))
$$
\phi(z+1) = -\gamma  + \sum_{n=1}^{\infty} \frac{z}{n(n+z)} = -\gamma  + \sum_{n=1}^{\infty} \Big( \frac{1}{n}-\frac{1}{n+z} \Big), \quad z\neq -1, -2,\dots,
$$
where  $\gamma$ is the  Euler-Mascheroni constant. For $z \geq 0$, the function $\phi(z+1)$ is a continuous monotonically increasing function, which shows that  \eqref{xi-relation} have only one solution. This solution is $\tilde{\xi}_k= m/d$ and $\tilde{\rho}_k= M/d$, $k=1,\dots,d$, and the $F(\xi,\rho)$ will have the extreme value at this stationary point, given by 
\bea\label{maxi-value}
F(\tilde{\xi},\tilde{\rho})  = 
\Phi_d(M,m).
\eea
{\bf Step 2:}  In order to find the global maximum,  we need to prove the following asymptotic relationship:
 \bea\label{maxi-value-asymptotic}
\Phi_n(M,m)
\leq 
\Phi_d(M,m), \qquad  n=1, \dots, d-1.
\eea
This is proven by considering three different cases. We first consider the  special case $m = 0$. In this case,  \eqref{maxi-value-asymptotic} holds trivially. 
  Next, we  consider the  case  $m = \delta M$, with $0< \delta < 1$. By using the property  \eqref{Stirling's formula} of Gamma functions, we have the following bound: 
\begin{align}\label{relation-useful}
\frac{
 \Phi_d(M,m)
 }
 {
 \Phi_n(M,m)
 } 
\geq 
\Big(\frac{\sqrt{2 \pi}}{e}\Big)^{d+n} \Big(\frac{d}{n}\Big)^{2\delta M} \Big( \frac{1-\delta}{1+\delta}\Big)^{\frac{d-n}{2}}.
\end{align}
By recalling that $0< \delta < 1$ and $n=1,\dots, d-1$, we have that $0<\frac{1-\delta}{1+\delta}<1$  and the function $(\frac{d}{n})^{2\delta M}$ is  monotonically increasing  with respect to $M$. For $M\geq \left( (d+n) \log({\frac{e}{\sqrt{2\pi}}}) + \frac{d-n}{2}\log({\frac{1+\delta}{1-\delta}}) \right)\left( 2\delta \log(\frac{d}{n}) \right)^{-1}$,  the above quotient formula is greater than $1$ and therefore \eqref{maxi-value-asymptotic} holds.

Finally, we consider the case $m =M$. Using the same techniques used to derive \eqref{relation-useful} together with the fact that $\Gamma(1)=1$, we have
\begin{align}\label{relation-useful2}
\frac{ 
\Phi_d(M,m)
}
{
\Phi_n(M,m)} 
= \frac{(\Gamma(\frac{2M}{n}+1))^n}{(\Gamma(\frac{2M}{d}+1))^d} \geq \frac{(\sqrt{2\pi})^n}{e^d}\Big( \frac{d}{2M}\Big)^{\frac{d-n}{2}} \Big( \frac{d}{n}\Big)^{2M+\frac{n}{2}}.
\end{align}
By using the fact that exponentially increasing functions  grow faster than polynomials, we know that for sufficiently large $M$ the right hand side of  \eqref{relation-useful2} is greater than $1$ and therefore \eqref{maxi-value-asymptotic} holds. 

{\bf Step 3:} Finally, we need to show that the extreme value \eqref{maxi-value} is the global maximum value of   $F({\xi},{\rho}) $ under the constraints $|\xi| = m$ and $|\rho| = M$.

First, we can see that the function $F({\xi},{\rho}) $ is  symmetric and  continuous with respect to  $\xi$ and $\rho$. The   constraints $|\xi| = m$ and $|\rho| = M$ restrict  the domain of  $\xi$ and $\rho$ to be a $(d-1)$-dimensional simplex, which is convex and  compact. So \emph{the maximum value of the function $F({\xi},{\rho}) $ over the domain will be obtained only at the boundary of the domain or the stationary point of $F(\xi,\rho)$.} We have calculated  the function value at the stationary point in  \eqref{maxi-value} already, so now we just need to check the function values on the boundary of the domain.

This may be proved by induction. We start with the case $d=2$: the domain of $\xi$ and $\rho$ satisfying the constrains are two straight lines, ${\rho_1}+{\rho_2}=M$ and ${\xi_1}+{\xi_2}=m$. Here, the stationary point is the mid-point of each of the two lines $\tilde{\xi}=(m/2,m/2)$, $\tilde{\rho}=(M/2,M/2)$,  and the boundary of the domain consist of the  points $\xi^{b} = (0,m)$, $\rho^{b} = (0,M)$ or $\xi^{b} = (m,0)$, $\rho^{b} = (M,0)$, due to the constraints $\rho\geq \xi$. Using the symmetry of the function and of the domain, we know that at the two boundary points of the domain, $F(\xi,\rho)$ will attain the same value, with $F(\xi^b,\rho^b)  = \Phi_1(M,m)$.  
  By using the asymptotic  relation \eqref{maxi-value-asymptotic}, we find 
$$
F(\xi^b,\rho^b)  = 
\Phi_1(M,m)
\leq 
\Phi_2(M,m)
 =F(\tilde{\xi},\tilde{\rho}).
$$
The above relation shows that the extreme value \eqref{maxi-value} is the global maximum value under the constraints for $d=2$. 

Next, we consider the case $d=3$, where the domain of each of $\xi$ and $\rho$ will be a triangle. In this case, the stationary point of $F(\xi,\rho)$ is when $\xi$ and $\rho$ are located at the barycenter of their respective triangle. The boundary of each domain consists of  $3$ straight lines. We need  to calculate the maximum value of $F(\xi,\rho)$ on the  boundary of the domain. By using the symmetry of $F(\xi,\rho)$, and that fact that $|\xi| = m$ and $|\rho|=M$, we only need to consider one part of domain boundary where $\xi_3$ = 0 and $\rho_3 = 0$. Then, the maximum of $F(\xi,\rho)$ on the domain boundary can be viewed as exactly the same problem with the same constraints as in the case $d=2$. Consequently, the maximum value of $F(\xi,\rho)$ along the boundary of the domain is $F(\xi^b,\rho^b)  = \Phi_2(M,m)$. 
Again, by using the same techniques as for $d=2$, we deduce that
$$
F(\xi^b,\rho^b)  = 
\Phi_2(M,m)
\leq 
\Phi_3(M,m)
=F(\tilde{\xi},\tilde{\rho}).
$$
The above relation shows that the extreme value \eqref{maxi-value} is the global maximum value under the constraints for $d=3$.  For the general $d$-dimensional case, the proof can be carried out  in a similar way. The key observation is that the maximum value of  $F(\xi,\rho)$ on the boundary of $d$-dimensional domain  will be at the stationary point of $F({\xi},{\rho})$ on the $(d-1)$-dimensional domain. By using the relation 
$$
\Phi_{d-1}(M,m)
\leq 
\Phi_d(M,m),
$$
the proof is complete.


\section{The $H^1$-projection operator onto  the ${\cal S}_p$ basis} 

In this section, we shall  consider the $H^1$-projection over the reference element $\hat{\k}:=(-1,1)^d$ with  $d=2,3$. Since the  three dimensional  results depend on the two dimensional results, we start with the two dimensional case.

\subsection{The $H^1$-projection operator on the reference square}

First, we   introduce the two-dimensional  serendipity finite element space, cf.  \cite{schwab}
\bea \label{serendipity space 2D}
{\cal S}_p(\hat{\k}): = {\cal P}_p(\hat{\k}) + \text{span} \{x_1^px_2, x_1x_2^p \}, \quad p\geq1.
\eea
 We can see in Figure \ref{Serendipity basis 2D}  that the serendipity space ${\cal S}_p$ contains two more basis functions than the  ${\cal P}_p$ basis for $p\geq 2$. Another way to define  the serendipity  basis is to consider the decomposition of the $C^0$ finite element space with ${\cal Q}_p$ basis over $\hat{\k}$.  For polynomial order $p$, the ${\cal S}_{p}$ basis  has  the same number of nodal basis functions and edge basis functions  as the ${\cal Q}_{p}$ basis, but the ${\cal S}_{p}$  basis only has internal moment basis functions  (those with  zero value along the element boundary $\partial \hat{\k}$) whose total degree is less than or equal  $p$, cf. \cite{schwab,MR1164869}.  We note that  serendipity FEMs can be defined in a dimension-independent fashion, see  \cite{arnold2011serendipity}. 

\begin{figure}
\begin{center}
\begin{tabular}{cc}
\hspace{-0.7 cm} \includegraphics[scale=0.25]{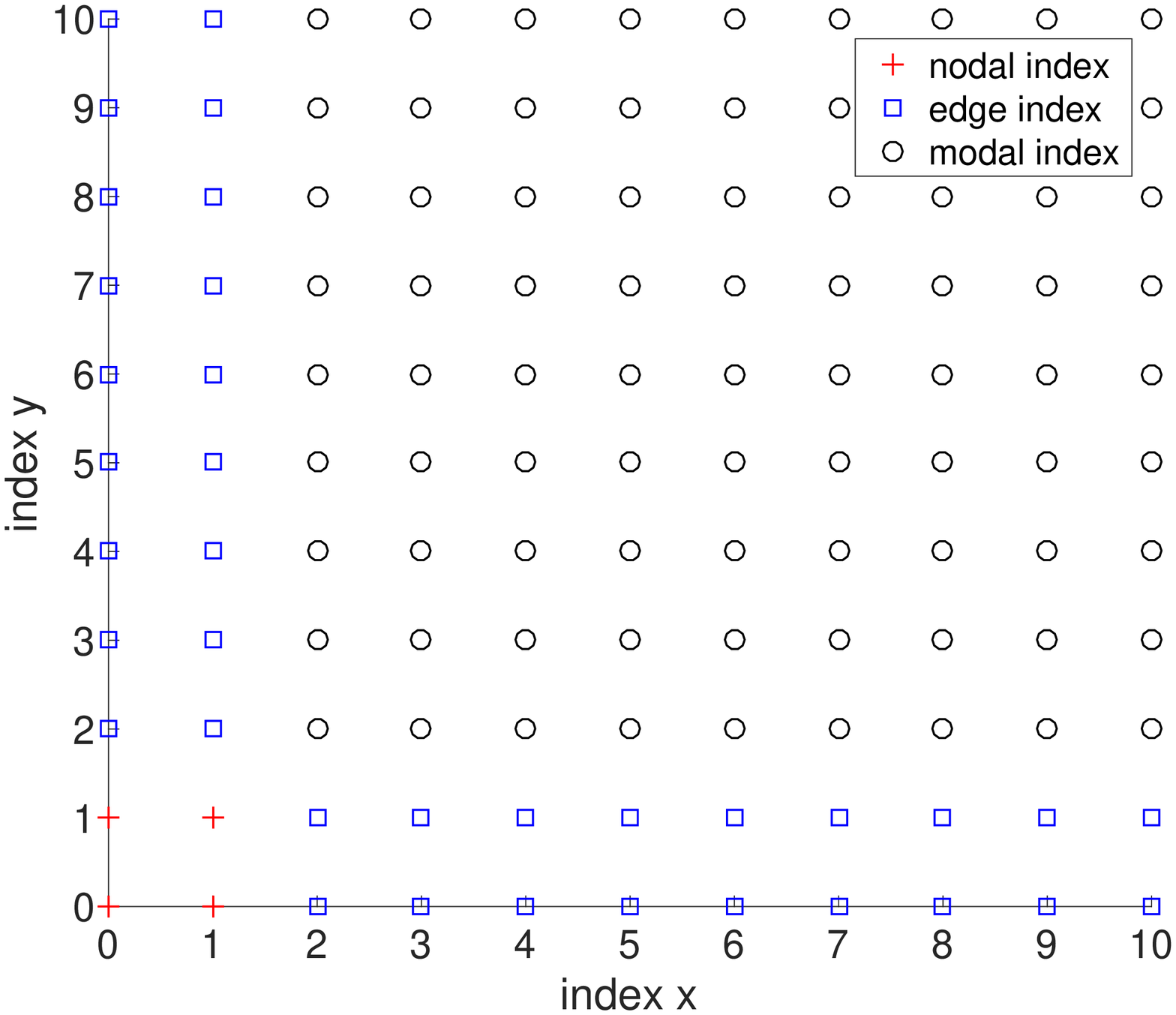} &
\hspace{-0 cm} \includegraphics[scale=0.25]{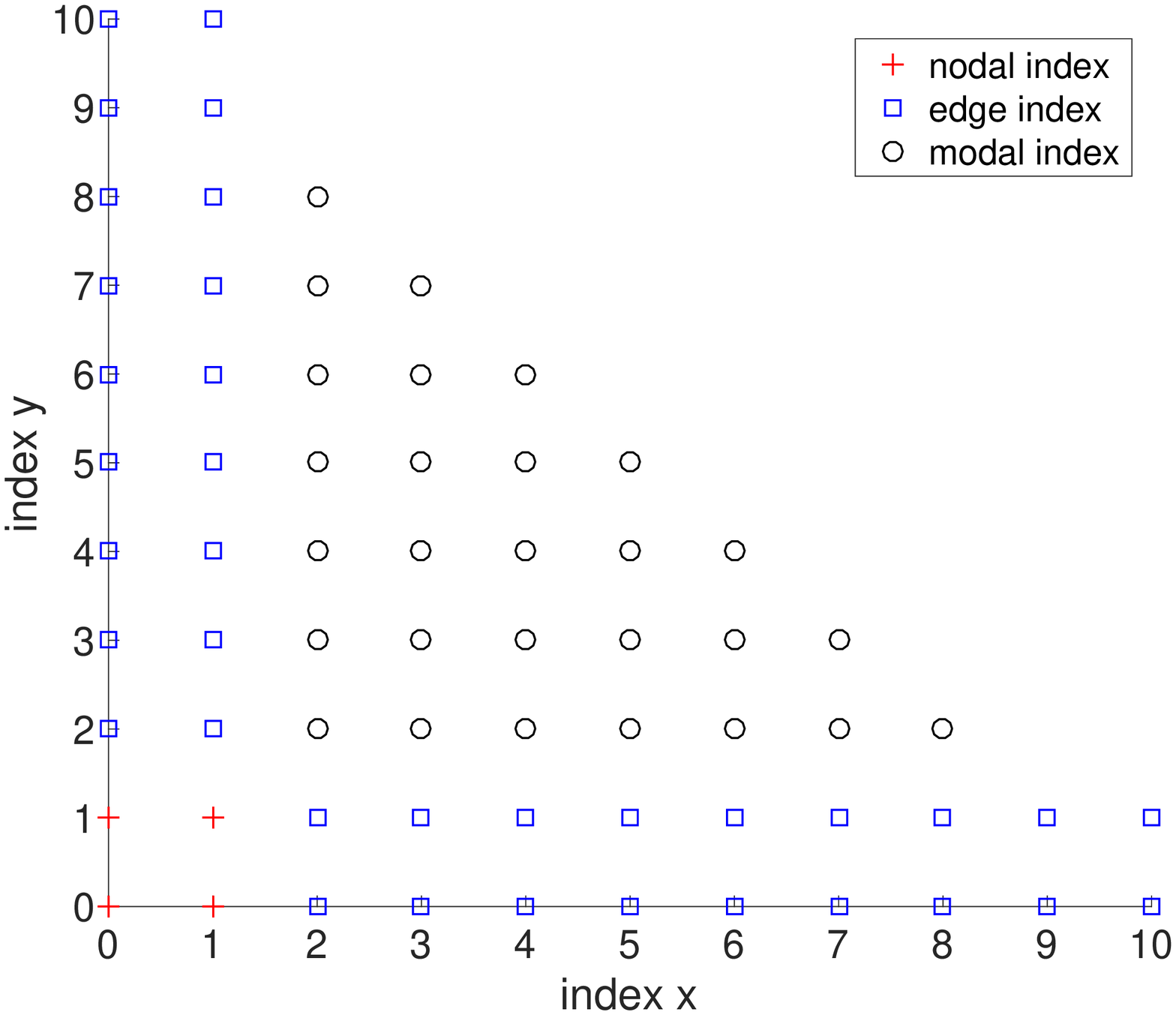}\\
\end{tabular}
\end{center}
\caption{${\cal Q}_p$ (left) and ${\cal S}_p$ (right) with polynomial order $10$.} \label{Serendipity basis 2D}
\end{figure}

Similarly to the case of the $L^2$-projection, we use $\pi_{{\cal Q}_p}:=\pi_p^{(1)}\pi_p^{(2)}$ to denote the  $H^1$-projection  onto the ${\cal Q}_p$ basis, which can be constructed via a tensor product of one dimensional  $H^1$-projection  with respect to variable $x_k$, given by   $\pi_p^{(k)}$. Similarly, the $H^1$-projection onto the ${\cal S}_p$ basis is denoted  by   $\pi_{{\cal S}_p}$, which is defined in \eqref{def:H^1 projector S basis}. 

Now, we construct the  two-dimensional  $H^1$-projection explicitly by using the    one-dimensional $H^1$-projection and tensor product arguments, see \cite{schwab,hss}.
For $u \in    H^l (\hat{\k})$, $l\geq 2$,  the projection $\pi_{{\cal Q}_p} u \in {\cal Q}_p(\hat{\k})$, $p\geq 1$, is defined by
\begin{align} \label{def:U in 2D}
 \pi_{{\cal Q}_p} u(x_1,x_2)  := & \int_{-1}^{x_1}\int_{-1}^{x_2} 
  \Pi_{{\cal Q}_{p-1}} \partial_1  \partial_2 u(x_1,x_2) \ud x_1  \ud x_2  +\int_{-1}^{x_1}    \Pi_{p-1}^{(1)} \partial_1  u(x_1,-1)  \ud x_1  
 \nno \\
&  + \int_{-1}^{x_2}    \Pi_{p-1}^{(2)} \partial_2 u(-1,x_2)   \ud x_2  +u(-1,-1)  \nno \\
=& \sum_{i_1=0}^{p-1}\sum_{i_2=0}^{p-1} a_{i_1 i_2}\psi_{i_1} (x_1)\psi_{i_2}(x_2)  
+ \sum_{{i_1}=0}^{p-1} b_{{i_1}}\psi_{i_1}(x_1)   
+\sum_{{i_2}=0}^{p-1} c_{{i_2}}\psi_{i_2}(x_2)  \nno \\
& +u(-1,-1);
\end{align}
the  projection $ \Pi_{{\cal Q}_{p-1}}$ and $\Pi_{p-1}^{(k)}$ are the two dimensional and one dimensional $L^2$-orthogonal projections, respectively, the coefficients  $a_{{i_1}{i_2}}$, $b_{i_1}$ and  $c_{i_2}$ are given by: 
\begin{align} \label{coefficients 2D}
a_{{i_1}{i_2}} &= \frac{2{i_1}+1}{2}\frac{2{i_2}+1}{2} \int_{\hat{\k}}    \partial_1  \partial_2 u(x_1,x_2) L_{i_1}(x_1) L_{i_2}(x_2)\ud x,  \nno \\
b_{i_1} &= \frac{2{i_1}+1}{2} \int_{-1}^1    \partial_1  u(x_1,-1) L_{i_1}(x_1)\ud x_1, \nno \\
c_{i_2} &= \frac{2{i_2}+1}{2} \int_{-1}^1    \partial_2  u(-1,x_2) L_{i_2}(x_2)\ud x_2 ,
\end{align}
 and the polynomial  function  $\psi_{j}(z)=\int_{-1}^z L_{j}(z)  \ud z$  with degree $j+1$, and satisfies $\psi_{j}(\pm1) = 0$ for $j\geq 1$.  Moreover,  for $j\geq 1$, $\psi_j(z) = -\frac{1}{j(j+1)}(1-z^2)L_j^\prime(z)$ has the following properties, cf. \cite{schwab},
\begin{align}\label{H1 orthogonal-1}
\int_{-1}^{1} \psi_j(z)\psi_k(z)\frac{1}{1-z^2} \ud z = \frac{2\delta_{jk}}{j(j+1)(2i+1)}.
\end{align}

Next, we rearrange the relation \eqref{def:U in 2D} by  separating  the internal moment  basis functions:  
\begin{align} \label{def:H^1 projector Q basis}
\pi_{{\cal Q}_p} u (x_1,x_2)
: =& \sum_{i_1=1}^{p-1}\sum_{i_2=1}^{p-1} a_{{i_1}{i_2}}
  \psi_{i_1}(x_1)\psi_{i_2}(x_2)  
+\sum_{{i_1}=0}^{p-1}  a_{{i_1}0}\psi_{i_1}(x_1)\psi_0(x_2) +u(-1,-1) 
 \nno \\
&+ \sum_{{i_2}=1}^{p-1}  a_{0{i_2}}\psi_0(x_1)\psi_{i_2}(x_2)
+\sum_{{i_1}=0}^{p-1} b_{{i_1}}\psi_{i_1}(x_1) +\sum_{{i_2}=0}^{p-1} c_{{i_2}}\psi_{i_2}(x_2) , 
\end{align}
so that the first  double summation  in \eqref{def:H^1 projector Q basis} only contains the internal moment basis functions.  From the definition of ${\cal S}_p$, $\pi_{{\cal S}_p}$ can be constructed by removing the internal moment  basis functions with polynomial order greater than $p$  in $\pi_{{\cal Q}_p}$. More specifically, $\pi_{{\cal S}_p} u \in {\cal S}_p(\hat{\k})$, $p\geq 4$, is defined by
\begin{align} \label{def:H^1 projector S basis}
\pi_{{\cal S}_p}u(x_1,x_2) :=&  \sum_{\substack {  |{i}| = 2 \\{i_k}\geq1, k=1,2 }}^{p-2 } a_{{i_1}{i_2}}\psi_{i_1}(x_1)\psi_{i_2}(x_2)  
+\sum_{{i_1}=0}^{p-1}  a_{{i_1}0}\psi_{i_1}(x_1)\psi_0(x_2)
+u(-1,-1)
  \nno \\
& 
+ \sum_{{i_2}=1}^{p-1}  a_{0{i_2}}\psi_0(x_1)\psi_{i_2}(x_2) 
+ \sum_{{i_1}=0}^{p-1} b_{{i_1}}\psi_{i_1}(x_1) +\sum_{{i_2}=0}^{p-1} c_{{i_2}}\psi_{i_2}(x_2). 
\end{align}
For $1\leq p\leq 3$, the first term in \eqref{def:H^1 projector S basis} will vanish, because there are no internal moment  basis functions for the serendipity basis in that case. In this work,  we focus on the high order polynomial cases, so we only consider the  $H^1$-projection $\pi_{{\cal S}_p}$ for $p\geq 4$.

Next,  we recall the following approximation lemma for $\pi_{{\cal Q}_p}$ from  \cite{hss}. 
\begin{lemma}\label{H^1 projector Q basis}
Let $\hat{\kappa}=(-1,1)^2$. Suppose that $u \in H^{l+1}(\hat{\kappa})$,
for some $l\ge 1$. Let $\pi_{{\cal Q}_p} u$ be the $H^1$-projection of $u$ onto $\mathcal{Q}_{p}(\hat{\kappa})$ with $p\geq 1$. Then, we have
\bea
\pi_{{\cal Q}_p} u = u \quad \text{at the vertices of $\hat{\k}$},
\eea
and the following error estimates hold:
\begin{align}\label{H1-projector-L2 norm}
\norm{u - \pi_{{\cal Q}_p} u}{L^2(\hat{\k})}^2& \leq
\frac{2}{p(p+1)}
\Phi_1(p,s)
 \Big( \norm{\partial_1^{s+1} u}{L^2(\hat{\k})}^2
+2\norm{\partial_2^{s+1} u}{L^2(\hat{\k})}^2 \Big) \nno \\
& + \frac{4}{p^2(p+1)^2}
 \Phi_1(p,s-1)
\norm{\partial_1 \partial_2^{s} u}{L^2(\hat{\k})}^2  
\leq 
C(s)\Big(  \frac{1}{p}\Big)^{2s+2}|u |^2_{H^{s+1}(\hat{\k})}, 
\end{align}
and
\begin{align}\label{H1-projector-H1 norm}
&\norm{\nabla (u - \pi_{{\cal Q}_p} u)}{L^2(\hat{\k})}^2 
 \leq 
2
 \Phi_1(p,s)
\Big( \norm{\partial_1^{s+1} u}{L^2(\hat{\k})}^2
+\norm{\partial_2^{s+1} u}{L^2(\hat{\k})}^2 \Big) \nno \\
& \hspace{0cm} + \frac{8}{p(p+1)} 
 \Phi_1(p,s-1)
\Big( \norm{\partial_1^{s}\partial_2 u}{L^2(\hat{\k})}^2
+\norm{\partial_1 \partial_2^{s} u}{L^2(\hat{\k})}^2 \Big)  
 \leq 
C(s)\Big(  \frac{1}{p}\Big)^{2s}|u |^2_{H^{s+1}(\hat{\k})},
\end{align}
for any integer $s$, $1 \leq s \leq \min\{p, l\}$.
\end{lemma}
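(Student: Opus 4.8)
The plan is to exploit the explicit tensor-product construction \eqref{def:U in 2D} of $\pi_{{\cal Q}_p}$ and to reduce the two-dimensional estimates, through the orthogonality of the functions $\psi_j$ and of the Legendre polynomials, to one-dimensional $L^2$-projection tails already controlled by Lemma \ref{L2-Q-basis}. First I would prove the vertex interpolation identity directly from \eqref{def:U in 2D}. At any vertex the double integral and at least one single integral collapse whenever a coordinate equals $-1$; since the $L^2$-projections $\Pi_{{\cal Q}_{p-1}}$ and $\Pi_{p-1}^{(k)}$ reproduce constants and hence preserve integrals over $\hat{\kappa}$ and over its edges, the surviving single integrals return the exact increments $u(1,-1)-u(-1,-1)$ and $u(-1,1)-u(-1,-1)$, and (at $(1,1)$) the double integral returns $\int_{\hat{\kappa}}\partial_1\partial_2 u$. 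Adding these telescoping increments to the reproduced value $u(-1,-1)$ gives $\pi_{{\cal Q}_p}u=u$ at all four vertices.

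The engine for both error bounds is to write the error as the truncation tail of the hierarchical expansion \eqref{def:H^1 projector Q basis}. Expanding $u$ itself in the same $\psi_j$-basis with infinite sums and subtracting $\pi_{{\cal Q}_p}u$, the error $u-\pi_{{\cal Q}_p}u$ equals exactly the sum over the discarded indices: the two edge contributions with $i_1\geq p$ (resp.\ $i_2\geq p$) and the interior contribution over $\{i_1\geq p\}\cup\{i_2\geq p\}$, whose coefficients are the Legendre coefficients \eqref{coefficients 2D} of $\partial_1\partial_2 u$ and of the edge traces $\partial_1 u(\cdot,-1)$, $\partial_2 u(-1,\cdot)$.

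For the $H^1$-seminorm bound \eqref{H1-projector-H1 norm} I would differentiate this tail and use $\psi_j'=L_j$, so that each derivative turns a $\psi$-factor into a Legendre factor. The plain Legendre orthogonality (i.e.\ \eqref{orthogonal-relation} with $k=0$) then diagonalizes $\norm{\nabla(u-\pi_{{\cal Q}_p}u)}{L^2(\hat{\kappa})}^2$ into sums of squared coefficients over the truncated index sets, each of which is exactly a one-dimensional or tensorized $L^2$-projection error of a derivative of $u$. Bounding these tails by $\Phi_1(p,s)$ times a weighted seminorm via Lemma \ref{L2-Q-basis} and \eqref{weighted-seminorm} produces the $\Phi_1(p,s)$ contribution from the two pure terms, while the mixed interior term still carries one undifferentiated $\psi$-factor and, after one use of the weighted relation \eqref{H1 orthogonal-1}, yields the $\tfrac{1}{p(p+1)}\Phi_1(p,s-1)$ contribution on $\norm{\partial_1^s\partial_2 u}{L^2(\hat{\kappa})}^2$ and $\norm{\partial_1\partial_2^s u}{L^2(\hat{\kappa})}^2$.

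For the $L^2$-bound \eqref{H1-projector-L2 norm} the obstacle is that the $\psi_j$ are not $L^2(-1,1)$-orthogonal, since they couple indices differing by two. I would circumvent this by estimating $\norm{\cdot}{L^2(\hat{\kappa})}$ from above by the weighted norm with weight $(1-x_1^2)^{-1}(1-x_2^2)^{-1}$, which is legitimate because the weight exceeds one on $\hat{\kappa}$; this restores diagonality via \eqref{H1 orthogonal-1} and, decisively, each integrated variable converts a factor $\tfrac{2}{2j+1}$ into $\tfrac{2}{j(j+1)(2j+1)}$, i.e.\ gains $\tfrac{1}{j(j+1)}\leq\tfrac{1}{p(p+1)}$ on the truncated range. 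One gain per direction then produces the $\tfrac{2}{p(p+1)}\Phi_1(p,s)$ prefactor on the pure terms and the doubly-gained $\tfrac{4}{p^2(p+1)^2}\Phi_1(p,s-1)$ prefactor on the mixed term, and a final reduction to weighted seminorms through \eqref{weighted-seminorm} and Lemma \ref{L2-Q-basis} closes the estimate. I expect the genuine difficulty to lie not in any single inequality but in the bookkeeping: when passing to the weighted norm one must verify that the three blocks (two edge sums and the interior sum) do not create spurious cross contributions and that the asymmetric roles of the two traces generate the asymmetric factor (the coefficient $2$ on $\norm{\partial_2^{s+1}u}{L^2(\hat{\kappa})}^2$). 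A triangle-inequality split of the three error blocks, at the cost of explicit numerical constants, is the cleanest way to decouple them while keeping the correct powers of $p$, and it also accounts for the numerical prefactors $2$, $8$, etc.\ appearing in \eqref{H1-projector-L2 norm}--\eqref{H1-projector-H1 norm}.
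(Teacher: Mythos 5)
The paper does not actually prove this lemma: it is quoted verbatim from \cite{hss}, and the proof there rests on the operator splitting $I-\pi_p^{(1)}\pi_p^{(2)}=(I-\pi_p^{(1)})+\pi_p^{(1)}(I-\pi_p^{(2)})$ combined with one-dimensional approximation and stability bounds for $\pi_p^{(k)}$ (this splitting is also what produces the asymmetric constants, e.g.\ the factor $2$ on $\norm{\partial_2^{s+1}u}{L^2(\hat{\k})}^2$ and the absence of $\norm{\partial_1^{s}\partial_2 u}{L^2(\hat{\k})}^2$ in \eqref{H1-projector-L2 norm}). Your vertex argument is correct, and your tail representation of the error is the right starting point, but your plan to ``decouple the three blocks by a triangle-inequality split'' has a concrete flaw: the two edge blocks have coefficients $b_{i_1}$, $c_{i_2}$ built from the traces $\partial_1 u(\cdot,-1)$ and $\partial_2 u(-1,\cdot)$, so bounding each block separately via the one-dimensional projection tail yields edge norms such as $\norm{\partial_1^{s+1}u(\cdot,-1)}{L^2(-1,1)}^2$ on the right-hand side. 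These cannot be converted into the volume norms $\norm{\partial_1^{s+1}u}{L^2(\hat{\k})}^2$ appearing in \eqref{H1-projector-L2 norm}--\eqref{H1-projector-H1 norm} without a trace inequality that costs additional regularity. The missing idea is a regrouping: the edge block in $x_1$ must be merged with the $\{i_1\ge p\}$ portion of the interior block, whereupon (using $\partial_1 u(x_1,x_2)=\partial_1 u(x_1,-1)+\int_{-1}^{x_2}\partial_1\partial_2 u\,\ud t$) the combined sum is exactly $(I-\Pi_{p-1}^{(1)})\partial_1 u$ on all of $\hat{\k}$, which the one-dimensional result bounds by $\Phi_1(p,s)\norm{\partial_1^{s+1}u}{L^2(\hat{\k})}^2$; only the remaining piece $\{i_1<p,\ i_2\ge p\}$ is treated with the weighted relation \eqref{H1 orthogonal-1} to gain the factor $1/(p(p+1))$.

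A second, smaller defect: you propose to majorize $\norm{\cdot}{L^2(\hat{\k})}$ by the norm with weight $(1-x_1^2)^{-1}(1-x_2^2)^{-1}$ on the whole interior tail, but $\psi_0(z)=1+z$ does not vanish at $z=1$ and $\int_{-1}^1 \psi_0(z)^2(1-z^2)^{-1}\ud z$ diverges; since the truncated interior set contains indices with $i_2=0$ (when $i_1\ge p$) and vice versa, the weighted majorization may only be applied in the coordinate directions whose index is $\ge 1$. In the regrouped form described above this is automatic (the leftover interior piece has $i_2\ge p\ge 1$), which is another reason the recombination, rather than a blind three-block split, is the essential step. With these two corrections your argument becomes a legitimate reconstruction of the proof in \cite{hss}.
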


Then, we derive the $hp$-error bound for the $H^1$-projection $\pi_{{\cal S}_p}$ for $p\geq 4$. 

\begin{theorem}\label{H^1 projector S basis}
Let $\hat{\kappa}=(-1,1)^2$. Suppose  that $u \in H^{l+1}(\hat{\kappa})$,
for some $l\ge 1$. Let $\pi_{{\cal S}_p} u$ be the $H^1$ projection of $u$ onto $\mathcal{S}_{p}(\hat{\kappa})$ with $p\geq 4$. Then, we have
\bea \label{relation 1 S point value}
\pi_{{\cal S}_p} u = u \quad \text{at the vertices of $\hat{\k}$},
\eea
and for any integer $s$, $1 \leq s \leq \min\{p, l\}$,  {$p$ sufficiently large}, the following error estimates hold:
\begin{align}\label{H1-projector-L2 norm S}
 \norm{u - \pi_{{\cal S}_p} u}{L^2(\hat{\k})}^2 &\leq
\frac{4}{p(p+1)}
 \Phi_1(p,s)
 \Big( \norm{\partial_1^{s+1} u}{L^2(\hat{\k})}^2
+2\norm{\partial_2^{s+1} u}{L^2(\hat{\k})}^2 \Big)\nno   \\
&  + \frac{8}{p^2(p+1)^2} 
 \Phi_1(p,s-1)
\norm{\partial_1 \partial_2^{s} u}{L^2(\hat{\k})}^2   \nno   \\
& \hspace{0cm} +72 
 \Phi_2(p+1,s+1)
 |\partial_1\partial_2 u |^2_{V^{s-1}(\hat{\k})}    
  \leq C(s)\Big(  \frac{2}{p+1}\Big)^{2s+2}|u |^2_{H^{s+1}(\hat{\k})},
\end{align}
and
\begin{align}\label{H1-projector-H1 norm S}
 \norm{\nabla (u - \pi_{{\cal S}_p} u)}{L^2(\hat{\k})}^2 
&\leq 
4
 \Phi_1(p,s)
\Big( \norm{\partial_1^{s+1} u}{L^2(\hat{\k})}^2
+\norm{\partial_2^{s+1} u}{L^2(\hat{\k})}^2 \Big)  \nno \\
& + \frac{16}{p(p+1)} 
 \Phi_1(p,s-1)
 \Big( \norm{\partial_1^{s}\partial_2 u}{L^2(\hat{\k})}^2
 +
 \norm{\partial_1 \partial_2^{s} u}{L^2(\hat{\k})}^2 \Big)
  \nno \\
& +24 
 \Phi_2(p,s)
|\partial_1\partial_2 u |^2_{V^{s-1}(\hat{\k})} 
\leq C(s)\Big(  \frac{2}{p}\Big)^{2s}|u |^2_{H^{s+1}(\hat{\k})}.
\end{align}
\end{theorem}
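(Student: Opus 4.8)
The plan is to read off from \eqref{def:H^1 projector Q basis} and \eqref{def:H^1 projector S basis} that $\pi_{{\cal S}_p}u$ is nothing but $\pi_{{\cal Q}_p}u$ with the interior bubble functions of excessive total degree removed. Writing $E:=\pi_{{\cal Q}_p}u-\pi_{{\cal S}_p}u$, the only terms that survive the subtraction are those internal moments $\psi_{i_1}(x_1)\psi_{i_2}(x_2)$ with $i_1,i_2\ge 1$ whose total degree $i_1+i_2+2$ exceeds $p$, so that
\[
E=\sum_{\substack{|i|\ge p-1,\; i_k\ge 1\\ i_k\le p-1}} a_{i_1 i_2}\,\psi_{i_1}(x_1)\psi_{i_2}(x_2),
\]
where, by \eqref{coefficients 2D}, the $a_{i_1i_2}$ are exactly the two-dimensional Legendre coefficients of $w:=\partial_1\partial_2 u$. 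Since every retained/removed bubble has $i_k\ge 1$ it vanishes on $\partial\hat{\k}$; hence $\pi_{{\cal S}_p}u$ and $\pi_{{\cal Q}_p}u$ agree at the four corners, and \eqref{relation 1 S point value} follows at once from the vertex-interpolation property in Lemma \ref{H^1 projector Q basis}.

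For the norm bounds I would use the decomposition $u-\pi_{{\cal S}_p}u=(u-\pi_{{\cal Q}_p}u)+E$ together with $\|a+b\|^2\le 2\|a\|^2+2\|b\|^2$. The first contribution is controlled verbatim by Lemma \ref{H^1 projector Q basis}, and the factor $2$ accounts precisely for the fact that the first two terms on the right of \eqref{H1-projector-L2 norm S} and \eqref{H1-projector-H1 norm S} are the doubled versions of those in Lemma \ref{H^1 projector Q basis}. The entire new content is therefore the estimate of $E$, i.e.\ the last (the $\Phi_2$) terms. The difficulty is that the bubbles $\psi_{i_1}\psi_{i_2}$ are \emph{not} orthogonal in the plain $L^2$ inner product, so the clean relation \eqref{H1 orthogonal-1}, which carries the weight $(1-z^2)^{-1}$, is not directly available. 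I would circumvent this by inserting the weight: since $(1-x_k^2)^{-1}\ge 1$ on $(-1,1)$, one has $\norm{E}{L^2(\hat{\k})}^2\le \int_{\hat{\k}} E^2\,(1-x_1^2)^{-1}(1-x_2^2)^{-1}\ud x$, and the weighted integral diagonalises through \eqref{H1 orthogonal-1}, giving
\[
\norm{E}{L^2(\hat{\k})}^2\le \sum_{\substack{|i|\ge p-1,\; i_k\ge 1}} |a_{i_1 i_2}|^2\prod_{k=1}^2 \frac{2}{i_k(i_k+1)(2i_k+1)}.
\]
For the gradient I would use $\psi_j'=L_j$ and apply the same weight trick only in the undifferentiated variable, so that Legendre orthogonality \eqref{orthogonal-relation} (with $k=0$) handles the differentiated variable and \eqref{H1 orthogonal-1} the other, yielding the analogous diagonal sums for $\norm{\partial_1 E}{L^2(\hat{\k})}^2$ and $\norm{\partial_2 E}{L^2(\hat{\k})}^2$ with the factor $\tfrac{2}{2i_k+1}$ in the differentiated index.

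It then remains to convert these diagonal sums into the weighted seminorm $|\partial_1\partial_2 u|_{V^{s-1}(\hat{\k})}$, following the same mechanism as in the proof of Theorem \ref{L2-projector-pbasis}. For each $\alpha$ with $|\alpha|=s-1$ and $i\ge\alpha$ I would factor out the weight $\prod_k \tfrac{2}{2i_k+1}\tfrac{\Gamma(i_k+\alpha_k+1)}{\Gamma(i_k-\alpha_k+1)}$ that reconstructs $\norm{W^\alpha D^\alpha w}{L^2(\hat{\k})}^2$ via \eqref{weighted-seminorm}, enlarge the index set by summing over all such $\alpha$ (each term being counted at least once since $|i|\ge p-1\ge s-1$), and bound the leftover factor by Lemma \ref{optimization}. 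The crucial elementary observation is that the surplus $\tfrac{1}{i_k(i_k+1)}$ may be spent to \emph{raise the order of the Gamma quotient by one}, namely $\tfrac{1}{i_k(i_k+1)}\tfrac{\Gamma(i_k-\alpha_k+1)}{\Gamma(i_k+\alpha_k+1)}\le \tfrac{\Gamma(i_k-\alpha_k)}{\Gamma(i_k+\alpha_k+2)}$, because $(i_k-\alpha_k)(i_k+\alpha_k+1)\le i_k(i_k+1)$; using this twice for the $L^2$ bound (order raised from $s-1$ to $s+1$) and once in the undifferentiated variable for the gradient (order raised to $s$) turns the leftover into $F(\beta,i)$ with $|\beta|=s+1$, respectively $|\beta|=s$, whence Lemma \ref{optimization} and the monotonicity of $\Phi_2(\cdot,m)$ in its first argument (using $|i|\ge p-1$) produce $\Phi_2(p+1,s+1)$, respectively $\Phi_2(p,s)$; summing the reconstructed seminorms over $|\alpha|=s-1$ delivers $|\partial_1\partial_2 u|^2_{V^{s-1}(\hat{\k})}$, and \eqref{Gamma} then yields the final $(2/p)$-powers and $|u|^2_{H^{s+1}(\hat{\k})}$. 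The main obstacle is precisely this last bookkeeping: the order-raising step must be applied to an index $i_k$ that is genuinely large (otherwise $\Gamma(i_k-\beta_k+1)$ degenerates and Lemma \ref{optimization} is inapplicable), so one has to distribute the two increments onto the large components of $i$ — always possible since $|i|\ge p-1$ for $p$ sufficiently large — and it is exactly this coupling between the decay of the Gamma quotient and the surplus $1/(i_k(i_k+1))$ factors that prevents any loss of a power of $p$ and fixes the explicit constants $72$ and $24$ together with the first-argument shifts in \eqref{H1-projector-L2 norm S} and \eqref{H1-projector-H1 norm S}.
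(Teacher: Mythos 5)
Your overall architecture coincides with the paper's: the identification of $E=\pi_{{\cal Q}_p}u-\pi_{{\cal S}_p}u$ with the high-total-degree internal moments, the vertex argument via $\psi_j(\pm1)=0$, the triangle inequality with factor $2$, the insertion of the weight $(1-x_k^2)^{-1}$ to diagonalise through \eqref{H1 orthogonal-1} (weighting only the undifferentiated variable for the gradient), the enlargement of the index set over $|\alpha|=s-1$, and the final appeal to Lemma \ref{optimization} and \eqref{Gamma}. The one place you diverge is the conversion of the surplus factor $\tfrac{1}{i_k(i_k+1)}$, and that step as you state it fails. Your inequality $\tfrac{1}{i_k(i_k+1)}\tfrac{\Gamma(i_k-\alpha_k+1)}{\Gamma(i_k+\alpha_k+1)}\le\tfrac{\Gamma(i_k-\alpha_k)}{\Gamma(i_k+\alpha_k+2)}$ degenerates whenever $i_k=\alpha_k$ (e.g.\ $\alpha=(s-1,0)$ and $i=(s-1,p-s)$ with $s\ge2$): the right-hand side is $\Gamma(0)/\Gamma(\cdot)=+\infty$, and Lemma \ref{optimization} is inapplicable because it requires $\rho_k\ge\xi_k$. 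Your proposed repair --- shifting both increments onto a large component of $i$ --- cannot work either, because each surplus factor $\tfrac{1}{i_k(i_k+1)}$ is attached to its own coordinate: for $i=(p-2,1)$ the total surplus is only of order $p^{-2}$, while raising the Gamma quotient by two orders in the first coordinate would require $p^{-4}$. Moreover, even where your inequality is valid it leaves $\rho=i$ unshifted, so $|\rho|\ge p-1$ and the monotonicity of $\Phi_2(\cdot,m)$ gives at best $\Phi_2(p-1,s+1)$ --- the same asymptotic order, but not the stated $\Phi_2(p+1,s+1)$, and the constants $72$ and $24$ do not come out of your version.

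The paper's resolution is the componentwise bound $\tfrac{1}{i_k(i_k+1)}\le\tfrac{6}{(i_k+\alpha_k+1)(i_k+\alpha_k+2)}$, valid for all $i_k\ge\max\{1,\alpha_k\}$, which converts the surplus into $6\,\tfrac{\Gamma(i_k-\alpha_k+1)}{\Gamma(i_k+\alpha_k+3)}$: the numerator argument stays at $i_k-\alpha_k+1\ge1$, so nothing degenerates, and the quotient is exactly $F$ evaluated at the shifted indices $\rho_k=i_k+1$, $\xi_k=\alpha_k+1$, whence $M=|i|+2\ge p+1$, $m=s+1$, and Lemma \ref{optimization} delivers $\Phi_2(p+1,s+1)$ with the factor $6^2=36$ (doubled to $72$ by the triangle inequality). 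For the gradient only one such factor appears, giving $\Phi_2(p,s)$ via $\xi=(\alpha_1,\alpha_2+1)$, $\rho=(i_1,i_2+1)$ and the constant $2\cdot(6+6)=24$. Everything else in your proposal goes through once this inequality is substituted for yours.
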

\begin{proof}
The key observation is  the fact that the serendipity basis $\mathcal{S}_{p}$ differs from the   $\mathcal{Q}_{p}$ basis only at the internal moment  basis functions which vanish along the boundary of $\hat{\k}$.  Indeed, using  \eqref{def:H^1 projector Q basis} and  \eqref{def:H^1 projector S basis}, we have 
\begin{align}
\Big(\pi_{{\cal Q}_p} u - \pi_{{\cal S}_p} u \Big)(x_1,x_2) = 
\sum_{\substack {  |i| =  p-1 \\p-1 \geq {i_k} \geq1,k=1,2 }}^{2(p-1)} 
a_{{i_1}{i_2}}\psi_{i_1}(x_1)\psi_{i_2}(x_2).
\end{align}
Using the fact that   $\psi_j(\pm 1)=0$ for $j\geq1$, we deduce that $(\pi_{{\cal Q}_p} u - \pi_{{\cal S}_p} u)|_{\partial{\hat{\k}}}=0$.  Thus, \eqref{relation 1 S point value} is proved.

Next, we derive \eqref{H1-projector-L2 norm S}. The first step is the use of the triangle inequality, 
\begin{align}
 \norm{u - \pi_{{\cal S}_p} u}{L^2(\hat{\k})}^2 \leq 2 \norm{u - \pi_{{\cal Q}_p} u}{L^2(\hat{\k})}^2 + 2\norm{\pi_{{\cal Q}_p} u -\pi_{{\cal S}_p} u}{L^2(\hat{\k})}^2. 
\end{align}
Thus, we only need to consider the error from the  second term in  the above bound. 
 By using  the orthogonality relation \eqref{H1 orthogonal-1} of $\psi_j(x)$ for $j\geq 1$ and $1 \leq s \leq \min\{p, l\}$, we have 
\begin{align} \label{relation1-H1}
\norm{ \pi_{{\cal Q}_p} u - \pi_{{\cal S}_p} u}{L^2(\hat{\k})}^2 &\leq \norm{ (\pi_{{\cal Q}_p}  u - \pi_{{\cal S}_p} u)W^{-1} }{L^2(\hat{\k})}^2 \nno \\
&= 
\sum_{\substack {  |i| =  p-1 \\p-1 \geq {i_k} \geq1,k=1,2 }}^{2(p-1)} 
|a_{{i_1}{i_2}}|^2 \prod_{k=1}^2 \frac{2}{2{i_k}+1} \frac{1}{{i_k}({i_k}+1)} \nno  \\
&\leq \sum_{|\alpha| = s-1} \hspace{0.2cm} \sum_{\substack { |i| = p-1, {i} \geq \alpha  \\ 
i_k\geq 1, k=1,2 }}^\infty 
|a_{{i_1}{i_2}}|^2 
\prod_{k=1}^2 \frac{2}{2{i_k}+1} \frac{1}{{i_k}({i_k}+1)} 
,
\end{align}
where in step two, we enlarged the summation index sets by adding the high order internal moment  basis functions with coefficients $a_{i_1i_2}$, $i_k\geq 1$ for $k=1,2$ and $|i|\geq p-1$. Thus, we have
\begin{align} \label{2D Q-S}
\norm{ \pi_{{\cal Q}_p} u - \pi_{{\cal S}_p} u}{L^2(\hat{\k})}^2 
&\leq 
\sum_{|\alpha| = s-1}  \hspace{0.2cm} \sum_{\substack { |i| = p-1, {i} \geq \alpha  \\ 
i_k\geq 1, k=1,2 }}^\infty 
 |a_{i_1 i_2}|^2
 \Big( \prod_{k=1}^2 \frac{2}{2{i_k}+1}  \frac{\Gamma(i_k + \alpha_k+1)}{\Gamma(i_k - \alpha_k+1)}    \Big)  \nno \\ 
& \times \Big( \prod_{k=1}^2   \frac{1}{{i_k}({i_k}+1)} \frac{\Gamma(i_k-\alpha_k+1) }{\Gamma(i_k+\alpha_k+1)}   \Big)  \nno \\
& \leq 
  \sum_{|\alpha| = s-1} \hspace{0.2cm} \sum_{\substack { |i| = p-1, {i} \geq \alpha  \\ 
i_k\geq 1, k=1,2 }}^\infty 
 |a_{i_1 i_2}|^2
 \Big( \prod_{k=1}^2 \frac{2}{2{i_k}+1}  \frac{\Gamma(i_k + \alpha_k+1)}{\Gamma(i_k - \alpha_k+1)}    \Big)   \nno \\
& \times \Big( \prod_{k=1}^2   \frac{\Gamma(i_k-\alpha_k+1) }{\Gamma(i_k+\alpha_k+3)}   \Big)\times36 .
\end{align}
Where we used $\frac{1}{i_k (i_k+1)} \leq \frac{6}{(i_k+\alpha_k+1)(i_k+\alpha_k+2)}$, since $i_k \geq \alpha_k$ and $i_k\geq1$. 
Now,  we have 
\begin{align} \label{2D Q-S step 2}
\norm{ \pi_{{\cal Q}_p} u - \pi_{{\cal S}_p} u}{L^2(\hat{\k})}^2 
& \leq \sum_{|\alpha| = s-1} \hspace{0cm}
  \sum_{\substack { |i| = p-1, {i} \geq \alpha   }}^\infty 
 |a_{i_1 i_2}|^2
 \Big( \prod_{k=1}^2 \frac{2}{2{i_k}+1}  \frac{\Gamma(i_k + \alpha_k+1)}{\Gamma(i_k - \alpha_k+1)}    \Big)  \nno \\
& \times \Big( \prod_{k=1}^2   \frac{\Gamma(i_k-\alpha_k+1) }{\Gamma(i_k+\alpha_k+3)}   \Big) \times 36   \nno \\
&\leq 36
 \Phi_2(p+1,s+1)
 \sum_{|\alpha| = s-1}  \norm{ W^{\alpha} D^{ \alpha} (\partial_1\partial_2u)}{L^2(\hat{\k})}^2  \nno \\
& = 36 
 \Phi_2(p+1,s+1)
|\partial_1\partial_2 u |^2_{V^{s-1}(\hat{\k})} 
\leq C(s)\Big(  \frac{2}{p+1}\Big)^{2s+2}\hspace{-0.2cm} |u |^2_{H^{s+1}(\hat{\k})};
\end{align}
 in step one, we enlarge the index set by adding functions with coefficients $a_{i_1i_2}$ whose index satisfying the relation $|i|\geq p-1$, $\prod_{k=1}^2 i_k =0$,  
 while in step two, we use  Lemma \ref{optimization}, with $\xi_1 =  \alpha_1 +1 \geq 1$, $\xi_2 = \alpha_2+1 \geq 1$, $\rho_1 = i_1 + 1 \geq 1$, $\rho_2 = i_2 +1 \geq 1$, $M = p+1$, and $m = s+1$, together with the restriction $1 \leq s \leq \min\{p, l\}$; in step three, we use  \eqref{weighted-seminorm} and  \eqref{coefficients 2D} to build up the link between the  derivatives of $u$ and coefficients $a_{i_1 i_2}$ and  in the last step, we use \eqref{Gamma}.  

Using the same techniques, we can derive the error estimate  for the $H^1$--seminorm.  We have
\begin{align}  \label{partial_1 2D first}
 \norm{\partial_1(\pi_{{\cal Q}_p} u -\pi_{{\cal S}_p} u)}{L^2(\hat{\k})}^2 &\leq \norm{ \partial_1(\pi_{{\cal Q}_p} u - \pi_{{\cal S}_p} u)W_2^{-1} }{L^2(\hat{\k})}^2 \nno  \\
&\leq 
\sum_{|\alpha| = s-1}  \hspace{0.2cm} \sum_{\substack { |i| = p-1, {i} \geq \alpha  \\ 
i_k\geq 1, k=1,2 }}^\infty 
 |a_{i_1 i_2}|^2  \frac{1}{{i_2}({i_2}+1)}
 \prod_{k=1}^2 \frac{2}{2{i_k}+1}  .
\end{align}
In the last step, we enlarge the summation index sets by adding the high order internal moment basis functions with coefficients $a_{i_1i_2}$, $i_k\geq 1$ for $k=1,2$ and $|i|\geq p-1$.  Thus, we have 
\begin{align} \label{partial_1 2D}
 \norm{\partial_1(\pi_{{\cal Q}_p} u - \pi_{{\cal S}_p} u)}{L^2(\hat{\k})}^2  
& \leq 
\sum_{|\alpha| = s-1}\hspace{0.2cm}  \sum_{\substack { |i| = p-1, {i} \geq \alpha  \\ 
i_k\geq 1, k=1,2 }}^\infty 
 |a_{i_1 i_2}|^2
 \Big( \prod_{k=1}^2 \frac{2}{2{i_k}+1}  \frac{\Gamma(i_k + \alpha_k+1)}{\Gamma(i_k - \alpha_k+1)}    \Big)  \nno  \\
& \times \Big( \frac{\Gamma(i_1 -\alpha_1+1) }{\Gamma(i_1+\alpha_1+1)} \frac{\Gamma(i_2-\alpha_2+1) }{\Gamma(i_2+\alpha_2+3)}   \Big)  \times 6 \nno \\
& \leq 
\sum_{|\alpha| = s-1} \hspace{0.2cm} \sum_{\substack { |i| = p-1, {i} \geq \alpha }}^\infty 
 |a_{i_1 i_2}|^2
 \Big( \prod_{k=1}^2 \frac{2}{2{i_k}+1}  \frac{\Gamma(i_k + \alpha_k+1)}{\Gamma(i_k - \alpha_k+1)}    \Big)   \nno  \\
& \times \Big( \frac{\Gamma(i_1 -\alpha_1+1) }{\Gamma(i_1+\alpha_1+1)} \frac{\Gamma(i_2-\alpha_2+1) }{\Gamma(i_2+\alpha_2+3)}   \Big)  \times 6  \nno \\
&\leq 6
 \Phi_2(p,s)
 \sum_{|\alpha| = s-1}  \norm{ W^{\alpha} D^{ \alpha} (\partial_1\partial_2u)}{L^2(\hat{\k})}^2  \nno  \\
& = 6 
 \Phi_2(p,s)
|\partial_1\partial_2 u |^2_{V^{s-1}(\hat{\k})} \leq C(s)\Big(  \frac{2}{p}\Big)^{2s}|u |^2_{H^{s+1}(\hat{\k})},
\end{align}
where in step two, we enlarge the index set again;  in step three we use Lemma \ref{optimization}, taking  $\xi_1 = \alpha_1 \geq 0$, $\xi_2 = \alpha_2+1 \geq 1$, $\rho_1 = i_1  \geq 0$, $\rho_2 = i_2 +1\geq 1$, $M = p$, and $m = s$, together with the restriction $1 \leq s \leq \min\{p, l\}$. 

Therefore, we have the bound
\begin{align} \label{relation2-H1}
\norm{\nabla(\pi_{{\cal Q}_p} u - \pi_{{\cal S}_p} u)}{L^2(\hat{\k})}^2  &\leq 12 
 \Phi_2(p,s)
|\partial_1\partial_2 u |^2_{V^{s-1}(\hat{\k})}  
\leq C(s)\Big(  \frac{2}{p}\Big)^{2s}|u |^2_{H^{s+1}(\hat{\k})}.  
\end{align}
Finally, using  \eqref{2D Q-S step 2}, \eqref{relation2-H1} and Lemma  \ref{H^1 projector Q basis}, the bounds \eqref{H1-projector-L2 norm S} and \eqref{H1-projector-H1 norm S} follow.
\end{proof}


\subsection{The $H^1$-projection operator on  the reference cube}\label{3D H1}

In this section, we shall  consider the $H^1$-projection operator over the reference cube $\hat{\k}:=(-1,1)^3$. First, we introduce the $3$D  serendipity finite element space.  

A simple way to define  the serendipity  basis is to consider a decomposition of the $C^0$ finite element space with ${\cal Q}_p$ basis over $\hat{\k}$.  For polynomial order $p$, the ${\cal S}_{p}$ basis  has  the same number of nodal basis functions and edge basis functions  as the ${\cal Q}_{p}$ basis, but the ${\cal S}_{p}$  basis only has face basis functions (those with  zero value on twelve edges and eight vertices) and  internal moment  basis functions  (those with  zero value along the element boundary $\partial \hat{\k}$) whose total degree is less than or equal  $p$.  The number of basis functions of ${\cal S}_p$ basis is calculated in  the following way
\begin{equation} \label{serendipity space 3D}
Dof ({\cal S}_p(\hat{\k})): = 8 + 12\times(p-1)+6\times\frac{(p-2)(p-3)}{2}+\frac{(p-3)(p-4)(p-5)}{6} ,
\end{equation}
here, we note that  for $p=1$, the serendipity basis only  contains  $8$ nodal basis functions  and  ${\cal S}_1(\hat{\k}):= {\cal Q}_1(\hat{\k})$. For $p\geq2$, the serendipity basis contains $(p-1)$ edge basis functions for each of the  $12$ edges. For $p\geq4$, the serendipity basis contains $(p-2)(p-3)/{2}$ face basis functions for each of the  $6$ faces. For $p\geq6$, the serendipity basis  contains ${(p-3)(p-4)(p-5)}/{6}$ internal moment basis functions.

Similarly to the $2$D case, we use $\pi_{{\cal Q}_p}:=\pi_p^{(1)}\pi_p^{(2)}\pi_p^{(3)}$ to denote the  $H^1$-projection  onto the ${\cal Q}_p$ basis.  The $H^1$-projection onto the ${\cal S}_p$ basis is denoted  by   $\pi_{{\cal S}_p}$. Additionally, we introduce some new notation for the forthcoming analysis. The projection $\pi_{{\cal S}_p}^{(1,2)}$ shall denote  the $H^1$-projection onto the serendipity spaces ${\cal S}_{p}$ with variables $(x_1, x_2)$ only, and the projections  $\pi_{{\cal S}_p}^{(1,3)}$ and $\pi_{{\cal S}_p}^{(2,3)}$ are defined in an analogous manner.

First, we explicitly construct the three-dimensional projection $\pi_{{\cal Q}_p}= \pi_p^{(1)}\pi_p^{(2)}\pi_p^{(3)}$.   For  $u \in    H^l (\hat{\k})$, $l\geq 3$,  the projection $\pi_{{\cal Q}_p}u \in {\cal Q}_p(\hat{\k})$, $p\geq 1$, is defined by
\begin{align*}
& \pi_{{\cal Q}_p} u (x_1,x_2,x_3)  :=  \int_{-1}^{x_1}\int_{-1}^{x_2} \int_{-1}^{x_3} 
\Pi_{{\cal Q}_{p-1}} \partial_1  \partial_2 \partial_3 u(x_1,x_2,x_3) \ud x_1  \ud x_2 \ud x_3 \\
& + \int_{-1}^{x_1}\int_{-1}^{x_2}   \Pi_{p-1}^{(1)}\Pi_{p-1}^{(2)}  \partial_1\partial_2  u(x_1,x_2,-1)  \ud x_1 \ud x_2  \\
& + \int_{-1}^{x_1}\int_{-1}^{x_3}    \Pi_{p-1}^{(1)}\Pi_{p-1}^{(3)}  \partial_1\partial_3  u(x_1,-1,x_3)  \ud x_1 \ud x_3  \\
&   +\int_{-1}^{x_2}\int_{-1}^{x_3}   \Pi_{p-1}^{(2)}\Pi_{p-1}^{(3)}   \partial_2\partial_3  u(-1,x_2,x_3)  \ud x_2 \ud x_3  
  +  \int_{-1}^{x_1}   \Pi_{p-1}^{(1)} \partial_1 u(x_1,-1,-1) \ud x_1  \\
&  +     \int_{-1}^{x_2}    \Pi_{p-1}^{(2)} \partial_2 u(-1,x_2,-1) \ud x_2 
 +      \int_{-1}^{x_3} \Pi_{p-1}^{(3)} \partial_3 u(-1,-1,x_3) \ud x_3  
+u(-1,-1,-1).  
\end{align*}
Then, the following Legendre polynomial expansion holds:
\begin{align}   \label{def:H^1 projector Q basis 3D full}
& \pi_{{\cal Q}_p} u(x_1,x_2,x_3)  := 
\sum_{i_1=0}^{p-1}\sum_{i_2=0}^{p-1} \sum_{i_3=0}^{p-1} a_{i_1 i_2i_3}\psi_{i_1} (x_1)\psi_{i_2}(x_2)\psi_{i_3}(x_3)   +u(-1,-1,-1)
 \nonumber \\
& 
+ \sum_{{i_1}=0}^{p-1} \sum_{{i_2}=0}^{p-1} b_{i_1i_2}\psi_{i_1}(x_1) \psi_{i_2}(x_2)
+ \sum_{{i_1}=0}^{p-1} \sum_{{i_3}=0}^{p-1} c_{i_1i_3}\psi_{i_1}(x_1) \psi_{i_3}(x_3)  
 \nonumber \\
 & 
  +\sum_{{i_2}=0}^{p-1} \sum_{{i_3}=0}^{p-1} d_{i_2i_3}\psi_{i_2}(x_2) \psi_{i_3}(x_3) 
 + \sum_{i_1=0}^{p-1} e_{i_1} \psi_{i_1}(x_1)
   +  \sum_{i_2=0}^{p-1} f_{i_2} \psi_{i_2}(x_2)
  +  \sum_{i_3=0}^{p-1} g_{i_3} \psi_{i_3}(x_3),
\end{align}
with coefficients  $a_{{i_1}{i_2}{i_3}}$, $b_{i_1 i_2}$,  $c_{i_1 i_3}$, $d_{i_2 i_3}$, give by \begin{align} \label{coefficients 3D}
a_{{i_1}{i_2}{i_3}} = \frac{2{i_1}+1}{2}\frac{2{i_2}+1}{2}\frac{2{i_3}+1}{2}\int_{\hat{\k}}    \partial_1  \partial_2 \partial_3 u(x_1,x_2,x_3) L_{i_1}(x_1) L_{i_2}(x_2)L_{i_3}(x_3)\ud x ,  \nno \\
b_{{i_1}{i_2}} = \frac{2{i_1}+1}{2}\frac{2{i_2}+1}{2} \int_{-1}^1 \int_{-1}^1
    \partial_1 \partial_2  u(x_1,x_2,-1) L_{i_1}(x_1) L_{i_2}(x_2) \ud x_1  \ud x_2 , \nno \\
c_{i_1 i_3} =  \frac{2{i_1}+1}{2} \frac{2{i_3}+1}{2} \int_{-1}^1 \int_{-1}^1    \partial_1 \partial_3  u(x_1,-1,x_3) L_{i_1}(x_1) L_{i_3}(x_3)\ud x_1 \ud x_3,  \nno \\
d_{i_2 i_3} = \frac{2{i_2}+1}{2} \frac{2{i_3}+1}{2} \int_{-1}^1 \int_{-1}^1    \partial_2 \partial_3  u(-1,x_2,x_3) L_{i_2}(x_2) L_{i_3}(x_3)\ud x_2 \ud x_3, 
\end{align}
together with  $e_{i_1}$, $ f_{i_2}$ and $ g_{i_3}$
\begin{align}\label{coefficients 3D part 2}
e_{i_1} =&\frac{2{i_1}+1}{2}  \int_{-1}^1  \partial_1   u(x_1,-1,-1) L_{i_1}(x_1)  \ud x_1, \nno \\
 f_{i_2} = &    \frac{2{i_2}+1}{2}  \int_{-1}^1  \partial_2   u(-1,x_2,-1) L_{i_2}(x_2)  \ud x_2, \nno \\
 g_{i_3} =&  \frac{2{i_3}+1}{2}  \int_{-1}^1  \partial_3   u(-1,-1,x_3) L_{i_3}(x_3)  \ud x_3. 
\end{align}


Now, we separate the face basis functions and  internal moment  basis functions from \eqref{def:H^1 projector Q basis 3D full}.  
\begin{align}  \label{def:H^1 projector Q basis 3D}
\pi_{{\cal Q}_p} u (x_1,x_2,x_3)
&:=   \sum_{i_1=1}^{p-1}\sum_{i_2=1}^{p-1} \sum_{i_3=1}^{p-1} a_{i_1 i_2i_3}\psi_{i_1} (x_1)\psi_{i_2}(x_2)\psi_{i_3}(x_3)  \nno \\
&\quad 
+ \sum_{{i_1}=1}^{p-1} \sum_{{i_2}=1}^{p-1}
\big(
a_{i_1 i_2 0}\psi_{i_1}(x_1) \psi_{i_2}(x_2)\psi_{0}(x_3)  +b_{i_1i_2} \psi_{i_1}(x_1) \psi_{i_2}(x_2) 
\big)
\nno \\
&\quad 
+ \sum_{{i_1}=1}^{p-1} \sum_{{i_3}=1}^{p-1}
\big(
a_{i_10 i_3 }\psi_{i_1}(x_1)\psi_{0}(x_2) \psi_{i_3}(x_3)  +c_{i_1i_3} \psi_{i_1}(x_1) \psi_{i_3}(x_3)
\big)    \nno  \\
&\quad 
+ \sum_{{i_2}=1}^{p-1} \sum_{{i_3}=1}^{p-1}
\big(
a_{0i_2 i_3 }\psi_{0}(x_1)\psi_{i_2}(x_2) \psi_{i_3}(x_3)   +d_{i_2i_3} \psi_{i_2}(x_2) \psi_{i_3}(x_3) 
\big)  \nno \\
&\quad + \text{edge basis} + \text{nodal basis}.
\end{align} 
Here, the first triple summation terms contains all the internal moment  basis functions only. Three double summation terms contain all the face basis functions. The edge basis functions and nodal basis functions will not be written explicitly because they play no role in the analysis. 

From the definition of ${\cal S}_p$, $\pi_{{\cal S}_p}u$ can be constructed by removing the face basis functions and  internal moment  basis functions with polynomial order greater than $p$  in $\pi_{{\cal Q}_p}u$. More specifically, $\pi_{{\cal S}_p} u \in {\cal S}_p(\hat{\k})$, $p\geq 6$, is defined by
\begin{align} \label{def:H^1 projector S basis 3D}
\pi_{{\cal S}_p}u (x_1,x_2,x_3)&:= 
  \sum_{\substack {  {|i|} = 3 \\{i_k}\geq1,k=1,2,3 }}^{p-3}
   a_{i_1 i_2i_3}\psi_{i_1} (x_1)\psi_{i_2}(x_2)\psi_{i_3}(x_3) 
     \nno \\
&\quad 
+  \sum_{\substack { {i_1+i_2} = 2 \\ {i_1}\geq1, {i_2}\geq1 }}^{p-2}
\big(
a_{i_1 i_2 0}\psi_{i_1}(x_1) \psi_{i_2}(x_2)\psi_{0}(x_3)  +b_{i_1i_2} \psi_{i_1}(x_1) \psi_{i_2}(x_2) 
\big) \nno \\
&\quad 
+\sum_{\substack {  {i_1+i_3} = 2 \\{i_1}\geq1, {i_3}\geq1 }}^{p-2}
\big(
a_{i_10 i_3 }\psi_{i_1}(x_1)\psi_{0}(x_2) \psi_{i_3}(x_3)  +c_{i_1i_3} \psi_{i_1}(x_1) \psi_{i_3}(x_3)
\big) 
 \nno  \\
&\quad
+ \sum_{\substack {{i_2+i_3} \geq 2 \\ {i_2}\geq1, {i_3}\geq1 }}^{p-2}
\big(
a_{0i_2 i_3 }\psi_{0}(x_1)\psi_{i_2}(x_2) \psi_{i_3}(x_3)   +d_{i_2i_3} \psi_{i_2}(x_2) \psi_{i_3}(x_3) 
\big)  \nno \\
& \quad+ \text{edge basis} + \text{nodal basis}
\end{align} 
For $1\leq p\leq 3$, both face basis functions and internal moment  basis functions in \eqref{def:H^1 projector S basis 3D} will vanish. For $4\leq p\leq 5$,   internal moment  basis functions in \eqref{def:H^1 projector S basis 3D} will vanish. Similar to the $2$D case,  we only consider the  $H^1$-projection $\pi_{{\cal S}_p}$ for $p\geq 6$.  


Next,  by using the stability and approximation results  for one dimensional $H^1$-projection in   \cite{hss}, we can derive the following approximation results for $\pi_{{\cal Q}_p}$. 
\begin{lemma}\label{H^1 projector Q basis 3D}
Let $\hat{\kappa}=(-1,1)^3$. Suppose that $u\in H^{l+1}(\hat{\kappa})$,
for some $l\ge 2$. Let $\pi_{{\cal Q}_p} u$ be the $H^1$-projection of $u$ onto $\mathcal{Q}_{p}(\hat{\kappa})$ with $p\geq 1$. Then, we have
\bea
\pi_{{\cal Q}_p} u = u \quad \text{at the vertices of $\hat{\k}$},
\eea
and the following error estimates hold:
\begin{align} \label{H1-projector-L2 norm 3D}
&\norm{u - \pi_{{\cal Q}_p} u}{L^2(\hat{\k})}^2
 \leq \frac{8}{p(p+1)}
\Phi_1(p,s)
\Big( \norm{\partial_1^{s+1} u}{L^2(\hat{\k})}^2
+\norm{\partial_2^{s+1} u}{L^2(\hat{\k})}^2
+\norm{\partial_3^{s+1} u}{L^2(\hat{\k})}^2 \Big) \nno \\
& 
 + \frac{8}{p^2(p+1)^2} 
\Phi_1(p,s-1)
\Big(\norm{\partial_1 \partial_2^{s} u}{L^2(\hat{\k})}^2
+     \norm{\partial_1 \partial_3^{s} u}{L^2(\hat{\k})}^2 
+     \norm{\partial_2 \partial_3^{s} u}{L^2(\hat{\k})}^2\Big)
 \nno \\
&
 + \frac{8}{p^3(p+1)^3} 
\Phi_1(p,s-2)
\norm{\partial_1 \partial_2 \partial_3^{s-1} u}{L^2(\hat{\k})}^2 
  \leq C(s)\Big(  \frac{1}{p}\Big)^{2s+2}|u |^2_{H^{s+1}(\hat{\k})}, 
\end{align}
and
\begin{align}\label{H1-projector-H1 norm 3D}
&\norm{\nabla (u - \pi_{{\cal Q}_p} u)}{L^2(\hat{\k})}^2 
\leq 
2
\Phi_1(p,s)
\Big( \norm{\partial_1^{s+1} u}{L^2(\hat{\k})}^2
+\norm{\partial_2^{s+1} u}{L^2(\hat{\k})}^2
+\norm{\partial_3^{s+1} u}{L^2(\hat{\k})}^2 \Big) \nno \\
& + \frac{8}{p(p+1)} 
\Phi_1(p,s-1)
\Big( 
\norm{\partial_1 \partial_2^s u}{L^2(\hat{\k})}^2
+\norm{\partial_2 \partial_3^{s} u}{L^2(\hat{\k})}^2 
+\norm{\partial_3\partial_1^s u}{L^2(\hat{\k})}^2
 \nno\\
&\hspace{3.5cm} 
+\norm{\partial_1 \partial_3^s u}{L^2(\hat{\k})}^2
+\norm{\partial_2 \partial_1^{s} u}{L^2(\hat{\k})}^2 
+\norm{\partial_3\partial_2^s u}{L^2(\hat{\k})}^2
\Big)
\nno \\
&
+ \frac{8}{p^2(p+1)^2} 
\Phi_1(p,s-2)
\Big(
\norm{\partial_1 \partial_2 \partial_3^{s-1} u}{L^2(\hat{\k})}^2 
+\norm{\partial_1 \partial_2 \partial_3^{s-1} u}{L^2(\hat{\k})}^2 
\nno \\  
&\hspace{3.5cm} 
+ \norm{\partial_1 \partial_2 \partial_3^{s-1} u}{L^2(\hat{\k})}^2  
\Big) \leq
C(s)\Big(  \frac{1}{p}\Big)^{2s}|u |^2_{H^{s+1}(\hat{\k})},  
\end{align} 
for any integer $s$, $2 \leq s \leq \min\{p, l\}$.
\end{lemma}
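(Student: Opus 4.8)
The plan is to prove the three-dimensional statement as a direct extension of the two-dimensional tensor-product argument behind Lemma~\ref{H^1 projector Q basis} and \cite{hss}, exploiting the factorization $\pi_{{\cal Q}_p}=\pi_p^{(1)}\pi_p^{(2)}\pi_p^{(3)}$ into one-dimensional $H^1$-projections that commute with differentiation in the other variables. I would \emph{not} invoke Lemma~\ref{optimization} here: all Gamma-function factors in \eqref{H1-projector-L2 norm 3D}--\eqref{H1-projector-H1 norm 3D} are of the form $\Phi_1(p,\cdot)$, which signals that the estimate is assembled from purely one-dimensional bounds applied direction by direction, rather than from a genuinely multi-dimensional coefficient optimization.

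First I would settle the vertex-interpolation identity. Evaluating the explicit construction \eqref{def:H^1 projector Q basis 3D full} at a vertex and using $\psi_j(-1)=0$ for all $j\ge 0$, $\psi_j(1)=0$ for $j\ge 1$, and $\psi_0(1)=2$, every multiple-integral term one of whose upper limits equals $-1$ drops out, while the surviving terms collapse by the fundamental theorem of calculus together with the fact that each one-dimensional $L^2$-projection $\Pi^{(k)}_{p-1}$ preserves the constant mode (hence preserves $\int_{-1}^{1}\partial_k u$ and, tensorially, the face integrals of the mixed derivatives). A short telescoping check over the eight vertices then yields $\pi_{{\cal Q}_p}u=u$ at each vertex.

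For the error bounds I would use the telescoping identity
\begin{align}
u-\pi_p^{(1)}\pi_p^{(2)}\pi_p^{(3)}u
=(I-\pi_p^{(1)})u+\pi_p^{(1)}(I-\pi_p^{(2)})u+\pi_p^{(1)}\pi_p^{(2)}(I-\pi_p^{(3)})u, \nno
\end{align}
and bound each summand recursively, realizing the computation through the expansion \eqref{def:H^1 projector Q basis 3D full}, the coefficient formulas \eqref{coefficients 3D}, and the orthogonality \eqref{H1 orthogonal-1}. The leading summand $(I-\pi_p^{(1)})u$ is handled by Fubini plus the one-dimensional $H^1$-projection $L^2$-error bound in $x_1$, giving the $\partial_1^{s+1}$ contribution with factor $\tfrac{1}{p(p+1)}\Phi_1(p,s)$. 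For a coupled summand such as $\pi_p^{(1)}(I-\pi_p^{(2)})u$ I would peel the outer projection via $\pi_p^{(1)}v=v-(I-\pi_p^{(1)})v$ with $v=(I-\pi_p^{(2)})u$; the term $\norm{v}{L^2(\hat{\k})}$ reproduces a single-derivative ($\partial_2^{s+1}$) contribution, while $\norm{(I-\pi_p^{(1)})v}{L^2(\hat{\k})}$, estimated with the \emph{$s=0$} one-dimensional bound (recall $\Phi_1(p,0)=1$) together with the commutation $\partial_1\pi_p^{(2)}=\pi_p^{(2)}\partial_1$, supplies an extra gain $\tfrac{1}{p(p+1)}$ and one more derivative, producing the mixed $\partial_1\partial_2^{s}$-type term with factor $\tfrac{1}{p^2(p+1)^2}\Phi_1(p,s-1)$. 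Iterating this peeling on the third summand generates the triple term $\partial_1\partial_2\partial_3^{s-1}$ with factor $\tfrac{1}{p^3(p+1)^3}\Phi_1(p,s-2)$. The $H^1$-seminorm estimate \eqref{H1-projector-H1 norm 3D} follows the identical scheme, except that in the direction carrying the gradient one uses the one-dimensional derivative-error bound $\norm{\partial(I-\pi_p)\,\cdot}{L^2}^2\le\Phi_1(p,s)\norm{\partial^{s+1}\cdot}{L^2}^2$, which carries no $\tfrac{1}{p(p+1)}$ gain; this explains the loss of two powers of $p(p+1)$ relative to the $L^2$ estimate. The closing bounds $C(s)(1/p)^{2s+2}$ and $C(s)(1/p)^{2s}$ then follow from the Gamma asymptotics \eqref{Gamma}.

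The main obstacle is the bookkeeping rather than any single delicate estimate: each peeling step multiplies the constant by a triangle-inequality factor and by the $L^2$-stability constant of the one-dimensional $H^1$-projection, and these accumulate to the prefactors appearing in three dimensions (versus their two-dimensional counterparts in Lemma~\ref{H^1 projector Q basis}); one must verify that commuting the derivatives past the intermediate projections, and tracking exactly which direction receives the $s+1$, $s$, or $s-1$ derivatives, reproduces the precise right-hand sides of \eqref{H1-projector-L2 norm 3D}--\eqref{H1-projector-H1 norm 3D}. Care is also needed with the hypothesis $2\le s\le\min\{p,l\}$, which is exactly what guarantees that the reduced orders $s-1$ and $s-2$ remain nonnegative and admissible in the one-dimensional estimates, so that the weighted seminorms $\norm{\partial_k^{s+1}u}{L^2(\hat{\k})}$ and their mixed analogues entering the bound are well defined.
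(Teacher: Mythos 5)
Your proposal is correct and follows essentially the same route as the paper, which states only that the lemma is obtained ``by using the stability and approximation results for the one-dimensional $H^1$-projection in \cite{hss}''; your telescoping decomposition $u-\pi_p^{(1)}\pi_p^{(2)}\pi_p^{(3)}u=(I-\pi_p^{(1)})u+\pi_p^{(1)}(I-\pi_p^{(2)})u+\pi_p^{(1)}\pi_p^{(2)}(I-\pi_p^{(3)})u$, combined with peeling off outer projections, commuting them with derivatives in the other variables, and applying the one-dimensional bounds (including the $s=0$ case) direction by direction, is precisely the standard tensor-product argument the paper has in mind, and it correctly reproduces the single, double and triple mixed-derivative terms with their respective $\Phi_1(p,s)$, $\Phi_1(p,s-1)$, $\Phi_1(p,s-2)$ factors. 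You are also right that Lemma~\ref{optimization} plays no role here, and your vertex-interpolation argument via preservation of the constant mode by $\Pi^{(k)}_{p-1}$ is the standard one.
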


Then, we derive the $hp$-error bound for the $H^1$-projection $\pi_{{\cal S}_p}$ for $p\geq 6$.

\begin{theorem}\label{H^1 projector S basis 3D}
Let $\hat{\kappa}=(-1,1)^3$. Suppose  that $u \in H^{l+1}(\hat{\kappa})$,
for some $l\ge 2$. Let $\pi_{{\cal S}_p} u$ be the $H^1$ projection of $u$ onto $\mathcal{S}_{p}(\hat{\kappa})$ with $p\geq 6$. Then, we have
\begin{align}  \label{relation 1 S point value 3D}
\pi_{{\cal S}_p} u = u \quad \text{at the vertices of $\hat{\k}$},
\end{align} 
and for any integer $s$, $2 \leq s \leq \min\{p, l\}$,  {$p$ sufficiently large}, the following error estimates hold:
\begin{align} \label{H1-projector-L2 norm S 3D}
 \norm{u - \pi_{{\cal S}_p} u}{L^2(\hat{\k})}^2 &\leq
2 \norm{u - \pi_{{\cal Q}_p} u}{L^2(\hat{\k})}^2 + 2\norm{\pi_{{\cal Q}_p} u -\pi_{{\cal S}_p} u}{L^2(\hat{\k})}^2. 
\nno \\
& \leq C_1 
\Phi_3(p+1,s+1)
|u |^2_{H^{s+1}(\hat{\k})}
 \leq C(s)\Big(  \frac{3}{p+1}\Big)^{2s+2}|u |^2_{H^{s+1}(\hat{\k})},
\end{align} 
and
\begin{align} \label{H1-projector-H1 norm S 3D}
\norm{\nabla (u - \pi_{{\cal S}_p} u)}{L^2(\hat{\k})}^2 &\leq 
2\norm{\nabla (u - \pi_{{\cal Q}_p} u)}{L^2(\hat{\k})}^2
+2 \norm{\nabla (\pi_{{\cal Q}_p} - \pi_{{\cal S}_p} u)}{L^2(\hat{\k})}^2
\nno \\
& \leq C_2 
\Phi_3(p,s)
|u |^2_{H^{s+1}(\hat{\k})}
  \leq C(s)\Big(  \frac{3}{p}\Big)^{2s}|u |^2_{H^{s+1}(\hat{\k})}.
\end{align} 
Here, $C_1$ and $C_2$ are positive constants independent of $p$, $l$ and $s$.
\end{theorem}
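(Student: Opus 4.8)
The plan is to follow the template of the two-dimensional argument in Theorem \ref{H^1 projector S basis}, exploiting that $\pi_{{\cal S}_p}$ and $\pi_{{\cal Q}_p}$ coincide on all nodal and edge contributions and differ only in the face and internal moment basis functions of total degree exceeding $p$ (nonempty precisely when $p\geq 6$). For the vertex identity \eqref{relation 1 S point value 3D} I would compare the expansions \eqref{def:H^1 projector Q basis 3D} and \eqref{def:H^1 projector S basis 3D} and observe that $\pi_{{\cal Q}_p}u-\pi_{{\cal S}_p}u$ is a linear combination of face basis functions (vanishing on the twelve edges and eight vertices) and internal moment basis functions (vanishing on $\partial\hat{\k}$), all built from products of factors $\psi_{i_k}$ with $i_k\geq 1$, hence vanishing at every vertex since $\psi_j(\pm 1)=0$ for $j\geq 1$. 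As Lemma \ref{H^1 projector Q basis 3D} already gives $\pi_{{\cal Q}_p}u=u$ at the vertices, \eqref{relation 1 S point value 3D} follows at once.

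For the two error bounds I would split, via the triangle inequality, into the $\pi_{{\cal Q}_p}$ approximation error, controlled directly by Lemma \ref{H^1 projector Q basis 3D} and of the lower $\Phi_1$--order, and the projection difference $\pi_{{\cal Q}_p}u-\pi_{{\cal S}_p}u$, which carries the dominant $\Phi_3$ term. The central step is to decompose this difference into one genuinely three-dimensional internal moment part, a triple sum over $\{i_k\geq 1,\ |i|\geq p-2\}$ of $a_{i_1i_2i_3}\psi_{i_1}\psi_{i_2}\psi_{i_3}$, together with three face parts, one per coordinate pair, each a double sum over $\{i_j,i_k\geq 1,\ i_j+i_k\geq p-1\}$. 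For the internal part I would bound $\norm{\cdot}{L^2(\hat{\k})}$ by $\norm{\cdot\,W_1^{-1}W_2^{-1}W_3^{-1}}{L^2(\hat{\k})}$, invoke the orthogonality \eqref{H1 orthogonal-1} in each variable, use $\tfrac{1}{i_k(i_k+1)}\le \tfrac{6}{(i_k+\alpha_k+1)(i_k+\alpha_k+2)}$, enlarge the index set over $|\alpha|=s-2$, and apply Lemma \ref{optimization} with $d=3$, the shift $\xi_k=\alpha_k+1$, $\rho_k=i_k+1$, $M=p+1$, $m=s+1$. Combined with \eqref{weighted-seminorm} and \eqref{coefficients 3D} this gives a bound $C\,\Phi_3(p+1,s+1)\,|\partial_1\partial_2\partial_3 u|_{V^{s-2}(\hat{\k})}^2$; the $H^1$--seminorm estimate is identical for each $\partial_j$, differentiating one factor $\psi_{i_k}$ into $L_{i_k}$ and using standard Legendre orthogonality in that variable, which shifts the arguments to $\Phi_3(p,s)$.

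For the three face parts I would recognise each as a two-dimensional serendipity projection difference in the variables $(x_j,x_k)$, to which the estimates \eqref{2D Q-S step 2} and \eqref{relation2-H1} apply (equivalently, Theorem \ref{H^1 projector S basis} through the projections $\pi_{{\cal S}_p}^{(1,2)}$, $\pi_{{\cal S}_p}^{(1,3)}$, $\pi_{{\cal S}_p}^{(2,3)}$), the remaining single-variable factor ($\psi_0$ or a constant) integrating to a $p$-independent constant; this yields $\Phi_2(p+1,s+1)$-- and $\Phi_2(p,s)$--order contributions. I would then absorb these into the internal estimate using the monotonicity $\Phi_2(M,m)\le\Phi_3(M,m)$ established in Step~2 of Lemma \ref{optimization} (relation \eqref{maxi-value-asymptotic}), and collect the $\Phi_3$ terms to obtain \eqref{H1-projector-L2 norm S 3D} and \eqref{H1-projector-H1 norm S 3D}, the final $(3/(p+1))^{2s+2}$ and $(3/p)^{2s}$ forms following from the Gamma-function asymptotics \eqref{Gamma}. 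The main obstacle I anticipate is the bookkeeping for the face parts: the coefficients $a_{i_1i_20}$ and $b_{i_1i_2}$ originate from different traces, of $\partial_1\partial_2\partial_3 u$ and of $\partial_1\partial_2 u$ on the face respectively, so showing that their cross-contributions combine cleanly into a two-dimensional weighted seminorm of a single trace, and then into $|u|_{H^{s+1}(\hat{\k})}$ through a $p$-independent trace estimate, while keeping $C_1$ and $C_2$ independent of $p$, is exactly what the careful index-set enlargements and the uniform use of Lemma \ref{optimization} are designed to deliver.
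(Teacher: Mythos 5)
Your proposal follows the paper's proof in all essentials: the same triangle-inequality split against $\pi_{{\cal Q}_p}u$, the same decomposition of $\pi_{{\cal Q}_p}u-\pi_{{\cal S}_p}u$ into one internal-moment term $T_1$ and three face terms, the same weighted-norm/orthogonality argument with the inequality $\tfrac{1}{i_k(i_k+1)}\le\tfrac{6}{(i_k+\alpha_k+1)(i_k+\alpha_k+2)}$ and Lemma \ref{optimization} applied with the shift $\xi_k=\alpha_k+1$, $\rho_k=i_k+1$, $M=p+1$, $m=s+1$, and the same absorption of the lower-dimensional contributions via \eqref{maxi-value-asymptotic}. The one place where your plan as written would not go through is the face terms: a face part such as $T_2$ is \emph{not} a two-dimensional serendipity projection difference multiplied by $\psi_0$ or a constant, because the genuine 2D difference $\pi_p^{(1)}\pi_p^{(2)}u-\pi_{{\cal S}_p}^{(1,2)}u$ applied to the trivariate $u$ carries the \emph{full} Legendre series in $x_3$ (all modes $i_3\ge 0$), whereas $T_2$ contains only the $b_{i_1i_2}$ and $a_{i_1i_2 0}\psi_0(x_3)$ contributions. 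This is exactly the obstacle you flag at the end, and the paper's resolution is the add-and-subtract identity \eqref{trace term 3D}: $T_2=T_{2,a}-T_{2,b}$ with $T_{2,a}=\pi_p^{(1)}\pi_p^{(2)}u-\pi_{{\cal S}_p}^{(1,2)}u$ (verified by the coefficient computation in the Appendix), so that $T_{2,a}$ is bounded by the 2D estimates \eqref{2D Q-S step 2} and \eqref{partial_1 2D}, while the correction $T_{2,b}=\sum_{i_1+i_2\ge p-1}\sum_{i_3\ge 1}a_{i_1i_2i_3}\psi_{i_1}\psi_{i_2}\psi_{i_3}$ is an internal-moment-type sum whose index set embeds into $\{|i|\ge p-2,\ i_k\ge 1\}$ and is therefore controlled exactly as $T_1$, yielding the $\Phi_3(p+1,s+1)$ and $\Phi_3(p,s)$ bounds. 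With that identity supplied, your argument closes and coincides with the paper's.
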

\begin{proof}

The proof of \eqref{relation 1 S point value 3D} is similar to the two dimensional  case, by using the fact that the serendipity basis $\mathcal{S}_{p}$ differs from  $\mathcal{Q}_{p}$ only in the face basis functions and internal basis functions which have zero values at each vertex. 

 Next, we begin to prove relation \eqref{H1-projector-L2 norm S 3D}. Using  \eqref{def:H^1 projector Q basis 3D} and  \eqref{def:H^1 projector S basis 3D}, we have 
\begin{align} 
&\Big(\pi_{{\cal Q}_p} u - \pi_{{\cal S}_p} u \Big) (x_1,x_2,x_3) =
\sum_{\substack {{|i|}=  p-2 \\  p-1 \geq {i_k} \geq1, k=1,2,3   }}^{3(p-1)}
   a_{i_1 i_2i_3}\psi_{i_1} (x_1)\psi_{i_2}(x_2)\psi_{i_3}(x_3)  \nno \\
&\quad+  
\sum_{\substack { {i_1+i_2} = p-1 \\ p-1 \geq {i_1} \geq1 , p-1 \geq {i_2} \geq1  }}^{2(p-1)}
\Big(
a_{i_1 i_2 0}\psi_{i_1}(x_1) \psi_{i_2}(x_2)\psi_{0}(x_3)  +b_{i_1i_2} \psi_{i_1}(x_1) \psi_{i_2}(x_2)
\Big) \nno \\
&\quad +
\sum_{\substack { {i_1+i_3} = p-1 \\ p-1 \geq  {i_1}\geq1, p-1 \geq {i_3}\geq1 }}^{2(p-1)}
\Big(
a_{i_10 i_3 }\psi_{i_1}(x_1) \psi_{0}(x_2) \psi_{i_3}(x_3) +c_{i_1i_3} \psi_{i_1}(x_1) \psi_{i_3}(x_3)
\Big)    \nno  \\
&\quad+
 \sum_{\substack { {i_2+i_3} =  p-1 \\p-1  \geq {i_2} \geq1,  p-1  \geq {i_3}\geq1 }}^{2(p-1)}
\Big(
a_{0i_2 i_3 }\psi_{0}(x_1)\psi_{i_2}(x_2) \psi_{i_3}(x_3)  +d_{i_2i_3} \psi_{i_2}(x_2) \psi_{i_3}(x_3)
\Big) \nno \\
& =  T_1+T_2+T_3+T_4 .
\end{align} 
We note that the term $T_1$ only contains the internal moment  basis functions, and the  three other terms only contain the face basis functions. By using the orthogonality relation \eqref{H1 orthogonal-1} of $\psi_j(x)$ for $j\geq 1$ and $2 \leq s \leq \min\{p, l\}$, we have 
\begin{align}  \label{T1 first}
\norm{ T_1}{L^2(\hat{\k})}^2 \leq&
 \norm{ (T_1) W^{-1} }{L^2(\hat{\k})}^2  
=  \sum_{\substack {  |i| = p-2 \\p-1 \geq {i_k} \geq1,k=1,2,3 }}^{3(p-1)} 
|a_{{i_1}{i_2}{i_3}}|^2 \prod_{k=1}^3 \frac{2}{2{i_k}+1} \frac{1}{{i_k}({i_k}+1)} \nno \\
&\leq 
\sum_{|\alpha| = s-2} \hspace{0.2cm} \sum_{\substack { |i| = p-2, {i} \geq \alpha \\
{i_k} \geq1,k=1,2,3 }}^\infty
 |a_{{i_1}{i_2}{i_3}}|^2 
\prod_{k=1}^3 \frac{2}{2{i_k}+1} \frac{1}{{i_k}({i_k}+1)} \nno \\
&=
\sum_{|\alpha| = s-2} \hspace{0.2cm} \sum_{\substack { |i| = p-2, {i} \geq \alpha \\
{i_k} \geq1,k=1,2,3 }}^\infty
 |a_{{i_1}{i_2}{i_3}}|^2   
 \Big( \prod_{k=1}^3 \frac{2}{2{i_k}+1}  \frac{\Gamma(i_k + \alpha_k+1)}{\Gamma(i_k - \alpha_k+1)}    \Big)      \nno \\
& \times \Big( \prod_{k=1}^3   \frac{1}{{i_k}({i_k}+1)} \frac{\Gamma(i_k-\alpha_k+1) }{\Gamma(i_k+\alpha_k+1)}   \Big)  \nno \\
& \leq 
\sum_{|\alpha| = s-2} \hspace{0.2cm} \sum_{\substack { |i|  = p-2, {i} \geq \alpha \\
{i_k} \geq1,k=1,2,3 }}^\infty
|a_{i_1 i_2 i_3}|^2
 \Big( \prod_{k=1}^3 \frac{2}{2{i_k}+1}  \frac{\Gamma(i_k + \alpha_k+1)}{\Gamma(i_k - \alpha_k+1)}    \Big)    \nno \\
& \times \Big( \prod_{k=1}^3   \frac{\Gamma(i_k-\alpha_k+1) }{\Gamma(i_k+\alpha_k+3)}   \Big) \times 6^3  ,
\end{align} 
where in  step two, we enlarged the summation index sets by adding the high order internal moment  basis functions with coefficients $a_{i_1i_2i_3}$, $i_k\geq 1$ for $k=1,2,3$ and $|i|\geq p-2$; in the last step, we used the relation $\frac{1}{i_k (i_k+1)} \leq \frac{6}{(i_k+\alpha_k+1)(i_k+\alpha_k+2)}$ for internal moment  basis functions with $i_k\geq 1$ and $i_k \geq \alpha_k$, $k=1,2,3$. Thus, we have 
\begin{align}  \label{T1}
\norm{T_1}{L^2(\hat{\k})}^2 
& \leq 
\sum_{|\alpha| = s-2}  \hspace{0.2cm} \sum_{\substack { |i| = p-2, {i} \geq \alpha}}^\infty
|a_{i_1 i_2 i_3}|^2
 \Big( \prod_{k=1}^3 \frac{2}{2{i_k}+1}  \frac{\Gamma(i_k + \alpha_k+1)}{\Gamma(i_k - \alpha_k+1)}    \Big)   \nno \\
& \times \Big( \prod_{k=1}^3   \frac{\Gamma(i_k-\alpha_k+1) }{\Gamma(i_k+\alpha_k+3)}   \Big) \times 6^3  \nno \\
&\leq 6^3
\Phi_3(p+1,s+1)
 \sum_{|\alpha| = s-2}  \norm{ W^{\alpha} D^{ \alpha} (\partial_1\partial_2 \partial_3 u)}{L^2(\hat{\k})}^2  \nno \\
& = 216 
\Phi_3(p+1,s+1)
 |\partial_1\partial_2 \partial_3 u |^2_{V^{s-2}(\hat{\k})} 
\leq  \nno \\ 
& \leq C(s)\Big(  \frac{3}{p+1}\Big)^{2s+2}|u |^2_{H^{s+1}(\hat{\k})},
\end{align} 
where in step one,  we enlarged the summation index set by adding functions  with coefficients $a_{i_1i_2i_3}$ whose index satisfying the relation $|i|\geq p-2$, $\prod_{k=1}^3 i_k =0$; 
 in step two, we used  Lemma \ref{optimization}, with $\xi_k =  \alpha_k +1 \geq 1$, $\rho_k = i_k + 1 \geq 1$, $k=1,2,3$, together with  $M = p+1$,  $m = s+1$, and  the restriction $2 \leq s \leq \min\{p, l\}$  and in the last step, we used the relation \eqref{Gamma}.  

Next, we will derive the error bound for the term $T_2$. We first rewrite $T_2$ in the following way by adding and subtracting the same terms 
\begin{align} \label{trace term 3D}
T_2 &=  \sum_{\substack {   {i_1+i_2} = p-1 \\ p-1 \geq {i_1} \geq1 , p-1 \geq {i_2} \geq1  }}^{2(p-1)}\Big( b_{i_1i_2}\psi_{i_1}(x_1) \psi_{i_2}(x_2)   
 + 
\sum_{i_3=0}^\infty
a_{i_1 i_2 i_3}  \psi_{i_1}(x_1) \psi_{i_2}(x_2)\psi_{i_3}(x_3) \Big) \nno \\
& \qquad  - 
\sum_{\substack { {i_1+i_2} = p-1 \\ p-1 \geq {i_1} \geq1 , p-1 \geq {i_2} \geq1  }}^{2(p-1)}
\sum_{i_3=1}^\infty
a_{i_1 i_2 i_3}  \psi_{i_1}(x_1) \psi_{i_2}(x_2)\psi_{i_3}(x_3)
 = T_{2,a}-T_{2,b}.
\end{align} 
The key observation is that $T_{2,a} = \pi_p^{(1)}\pi_p^{(2)}u -  \pi_{{\cal S}_p}^{(1,2)}u$, see Appendix.  By using the $2$D approximation results \eqref{2D Q-S step 2}, we have the following bound
\begin{align} \label{relation1-H1 3D T2a}
\norm{\pi_p^{(1)}\pi_p^{(2)}u -  \pi_{{\cal S}_p}^{(1,2)}u}{L^2(\hat{\k})}^2&\leq
36
\Phi_2(p+1,s+1)
 \sum_{|\alpha| = s-1, \alpha_3=0}  \norm{ W^{\alpha} D^{ \alpha} (\partial_1\partial_2u)}{L^2(\hat{\k})}^2  \nno \\
& \leq C(s)\Big(  \frac{2}{p+1}\Big)^{2s+2}|u |^2_{H^{s+1}(\hat{\k})}.
\end{align} 
We note that $T_{2,b}$ only contains the internal moment  basis functions, so using \eqref{weight} and  the  orthogonality relation \eqref{H1 orthogonal-1} of $\psi_j(x)$ for $j\geq 1$, we have 
\begin{align}  \label{T2b}
 \norm{ T_{2,b}}{L^2(\hat{\k})}^2 &  \leq \norm{ (T_{2,b}) W^{-1} }{L^2(\hat{\k})}^2  
= \hspace{-0.5cm}
 \sum_{\substack { {i_1+i_2} = p-1 \\ p-1 \geq {i_1} \geq1 , p-1 \geq {i_2} \geq1  }}^{2(p-1)}
\sum_{i_3=1}^\infty
 |a_{{i_1}{i_2}{i_3}}|^2 \prod_{k=1}^3 \frac{2}{2{i_k}+1} \frac{1}{{i_k}({i_k}+1)}    \nno   \\
&  \leq   \sum_{\substack { |i| = p-2,\\ i_k\geq1, k=1,2,3  }}^\infty
 |a_{{i_1}{i_2}{i_3}}|^2 \prod_{k=1}^3 \frac{2}{2{i_k}+1} \frac{1}{{i_k}({i_k}+1)}  \nno   \\
& \leq \sum_{|\alpha| = s-2} \hspace{0.2cm} \sum_{\substack { |i| = p-2, {i} \geq \alpha \\
 i_k\geq1, k=1,2,3 }}^\infty
 |a_{{i_1}{i_2}{i_3}}|^2 
\prod_{k=1}^3 \frac{2}{2{i_k}+1} \frac{1}{{i_k}({i_k}+1)}, 
\end{align} 
where in step two, we used  the fact that the multi-index set for $T_{2,b}$ is a subset of multi-index $i$ with $|i|\geq p-2$ and $i_k\geq 1$, $k=1,2,3$ and then, we enlarged the index set of the summations by adding the high order internal moment  basis functions with coefficients $a_{i_1i_2i_3}$, $i_k\geq 1$ for $k=1,2,3$ and $|i|\geq p-2$.  By using \label{T1 first} and \eqref{T1}, the following bound holds
\begin{align} \label{relation1-H1 3D T2b}
\norm{ T_{2,b}}{L^2(\hat{\k})}^2 &\leq 216 
\Phi_3(p+1,s+1)
|\partial_1\partial_2 \partial_3 u |^2_{V^{s-2}(\hat{\k})} 
\leq C(s)\Big(  \frac{3}{p+1}\Big)^{2s+2}|u |^2_{H^{s+1}(\hat{\k})}. 
\end{align}
Combining  \eqref{relation1-H1 3D T2a} and  \eqref{relation1-H1 3D T2b} together with the asymptotic relation \eqref{maxi-value-asymptotic}, the following bound for  $T_{2}$ holds
\begin{align} \label{relation1-H1 3D T2}
 \norm{ T_{2}}{L^2(\hat{\k})}^2 &\leq 504 
\Phi_3(p+1,s+1)
|u |^2_{H^{s+1}(\hat{\k})} 
\leq C(s)\Big(  \frac{3}{p+1}\Big)^{2s+2} |u |^2_{H^{s+1}(\hat{\k})}. 
\end{align}
It is easy to verify that $T_3$ and $T_4$ satisfy the same error bound as  $T_2$, and thus  the $L^2$--norm error bound \eqref{H1-projector-L2 norm S 3D} is proved.

The error estimate  for the $H^1$--seminorm  \eqref{H1-projector-H1 norm S 3D} can be derived by using the same techniques used in deriving  \eqref{H1-projector-L2 norm S 3D}. As such, we have 
\begin{align} \label{T1 parital_1 first}
\norm{\partial_1 T_1}{L^2(\hat{\k})}^2 
& \leq \norm{(\partial_1 T_1) W_2^{-1}W_3^{-1}}{L^2(\hat{\k})}^2 \nno \\
&\leq
\sum_{|\alpha| = s-2}  \hspace{0.2cm}
\sum_{\substack { |i| = p-2, {i} \geq \alpha \\
 i_k\geq1, k=1,2,3  }}^\infty 
 |a_{{i_1}{i_2}{i_3}}|^2   
 \Big( \prod_{k=1}^3 \frac{2}{2{i_k}+1}  \frac{\Gamma(i_k + \alpha_k+1)}{\Gamma(i_k - \alpha_k+1)}    \Big)  \nno \\
 &\quad  \times 
\Big( \frac{\Gamma(i_1-\alpha_1+1) }{\Gamma(i_1+\alpha_1+1)} \Big) 
\Big( \prod_{k=2}^3   \frac{1}{{i_k}({i_k}+1)} \frac{\Gamma(i_k-\alpha_k+1) }{\Gamma(i_k+\alpha_k+1)}   \Big)  \nno \\
& \leq \sum_{|\alpha| = s-2}  \hspace{0.2cm}
\sum_{\substack { |i| = p-2, {i} \geq \alpha \\
 i_k\geq1, k=1,2,3  }}^\infty 
 |a_{i_1 i_2 i_3}|^2
 \Big( \prod_{k=1}^3 \frac{2}{2{i_k}+1}  \frac{\Gamma(i_k + \alpha_k+1)}{\Gamma(i_k - \alpha_k+1)}    \Big)  \nno \\
&\quad  \times 6^2 
\Big( \frac{\Gamma(i_1-\alpha_1+1) }{\Gamma(i_1+\alpha_1+1)} \Big) 
\Big( \prod_{k=2}^3   \frac{\Gamma(i_k-\alpha_k+1) }{\Gamma(i_k+\alpha_k+3)}   \Big) ,
\end{align}
Now, we have
\begin{align} \label{T1 parital_1}
\norm{\partial_1 T_1}{L^2(\hat{\k})}^2 
& \leq \sum_{|\alpha| = s-2}  \hspace{0.2cm} \sum_{\substack { |i| = p-2, {i} \geq \alpha   }}^\infty |a_{i_1 i_2 i_3}|^2
 \Big( \prod_{k=1}^3 \frac{2}{2{i_k}+1}  \frac{\Gamma(i_k + \alpha_k+1)}{\Gamma(i_k - \alpha_k+1)}    \Big)  \nno \\
&\quad  \times 6^2 
\Big( \frac{\Gamma(i_1-\alpha_1+1) }{\Gamma(i_1+\alpha_1+1)} \Big) 
\Big( \prod_{k=2}^3   \frac{\Gamma(i_k-\alpha_k+1) }{\Gamma(i_k+\alpha_k+3)}   \Big)  \nno \\
&\leq 6^2
\Phi_3(p,s)
\sum_{|\alpha| = s-2}  \norm{ W^{\alpha} D^{ \alpha} (\partial_1\partial_2 \partial_3 u)}{L^2(\hat{\k})}^2  \nno \\
& = 36 
\Phi_3(p,s)
|\partial_1\partial_2 \partial_3 u |^2_{V^{s-2}(\hat{\k})} 
\leq C(s)\Big(  \frac{3}{p}\Big)^{2s}|u |^2_{H^{s+1}(\hat{\k})},
\end{align}
where in step one, we enlarged the summation index set by adding functions with coefficients $a_{i_1i_2i_3}$ whose index satisfying the relation $|i|\geq p-2$, $\prod_{k=1}^3 i_k =0$;
 in step two we use Lemma \ref{optimization}, taking  $\xi_1 = \alpha_1 \geq 0$, $\xi_2 = \alpha_2+1 \geq 1$, $\xi_3 = \alpha_3+1 \geq 1$, $\rho_1 = i_1  \geq 0$, $\rho_2 = i_2 +1\geq 1$, $\rho_3 = i_3+1\geq 1$, $M = p$, and $m = s$, together with the restriction $2 \leq s \leq \min\{p, l\}$.

Next, we consider the error bound for term $T_{2}$. By using  \eqref{partial_1 2D}, the following bound holds for $T_{2,a}$
\begin{align}\label{T2a partial1}
\norm{ \partial_1 (T_{2,a}) }{L^2(\hat{\k})}^2   &= \norm{ \partial_1 (\pi_p^{(1)}\pi_p^{(2)}u -  \pi_{{\cal S}_p}^{(1,2)}u) }{L^2(\hat{\k})}^2  \nno \\
&\leq
6
\Phi_2(p,s)\hspace{-0.2cm}
\sum_{|\alpha| = s-1, \alpha_3=0} \hspace{-0.5cm} \norm{ W^{\alpha} D^{ \alpha} (\partial_1\partial_2u)}{L^2(\hat{\k})}^2   
\leq C(s)\Big(  \frac{2}{p}\Big)^{2s}|u |^2_{H^{s+1}(\hat{\k})}.
\end{align}
With the help of \eqref{T2b} and \eqref{T1 parital_1}, the following bound holds
\begin{align} \label{T2b partial1}
\norm{\partial_1 T_{2,b}}{L^2(\hat{\k})}^2
& \leq \norm{ (\partial_1 T_{2,b}) W_2^{-1}W_3^{-1} }{L^2(\hat{\k})}^2     \nno \\
& \leq \sum_{|\alpha| = s-2}  \sum_{\substack { |i| = p-2, {i} \geq \alpha  }}^\infty
 |a_{{i_1}{i_2}{i_3}}|^2 
 \Big(  \frac{2}{2{i_1}+1}\Big)
\Big( \prod_{k=2}^3 \frac{2}{2{i_k}+1} \frac{1}{{i_k}({i_k}+1)} \Big) \nno  \\ 
& =  36 
\Phi_3(p,s)
|\partial_1\partial_2 \partial_3 u |^2_{V^{s-2}(\hat{\k})} 
\leq C(s)\Big(  \frac{3}{p}\Big)^{2s}|u |^2_{H^{s+1}(\hat{\k})}.
\end{align}
Combing \eqref{T2a partial1}, \eqref{T2b partial1} together with the asymptotic relation \eqref{maxi-value-asymptotic}, then  the  following error bound for term $T_2$.
\begin{align} \label{relation1-H1 3D T2 parital1}
\norm{\partial_1 T_{2}}{L^2(\hat{\k})}^2 \leq 84 
\Phi_3(p,s)
 |u |^2_{H^{s+1}(\hat{\k})}
\leq C(s)\Big(  \frac{3}{p}\Big)^{2s} |u |^2_{H^{s+1}(\hat{\k})}.  
\end{align}
The above error bound for $T_2$ term is also  the error bound for terms $T_3$ and $T_4$. By using \eqref{T2a partial1} and  \eqref{T2b partial1},  it is to see that the following relation holds
\begin{align}\label{relation1-H1 3D  parital1}
\norm{\partial_1 (\pi_{{\cal Q}_p} - \pi_{{\cal S}_p} u)}{L^2(\hat{\k})}^2\leq C^* 
\Phi_3(p,s)
  |u |^2_{H^{s+1}(\hat{\k})}
\leq C(s)\Big(  \frac{3}{p}\Big)^{2s} |u |^2_{H^{s+1}(\hat{\k})}, 
\end{align}
where $C^*$ is a positive constant independent of $p$, $s$ and $l$. We note that above $L^2$--norm error bound also holds for the rest two  partial derivatives. So     the  $H^1$--seminorm bound  \eqref{H1-projector-H1 norm S 3D} is proved.
 \end{proof}

\begin{remark}
We again make a comparison between the bounds in the $L^2$--norm and $H^1$--seminorm, given in Lemma \ref{H^1 projector Q basis} for $d=2$ and Lemma \ref{H^1 projector Q basis 3D} for $d=3$  respectively for  $\pi_{{\cal Q}_p}$, and Theorem \ref{H^1 projector S basis} for $d=2$ and Theorem \ref{H^1 projector S basis 3D} for $d=3$  respectively for  $\pi_{{\cal S}_p}$. Similarly to the comparisons for the $L^2$-projection onto  ${\cal P}_p$ and ${\cal Q}_p$,   both bounds are $p$-optimal in both  Sobolev regularity and  polynomial order. We can also see that the bounds for $\pi_{{\cal S}_p}$  have a larger constant than those for   $\pi_{{\cal Q}_p}$, and this constant depends on dimension $d$.  Moreover, we point out that the optimal approximation results for the $H^1$-projection with ${\cal S}_p$ basis in Theorem \ref{H^1 projector S basis} and Theorem \ref{H^1 projector S basis 3D} directly imply the  $hp$-optimal error bound for the  $L^2$--norm   on the trace of $\hat{\k}$ for $\pi_{{\cal S}_p}$.
\end{remark}

\begin{remark}
We note that in the Theorem \ref{H^1 projector S basis} and \ref{H^1 projector S basis 3D}, the minimum Sobolev regularity requirements for defining $H^1$-projection is $u\in H^d(\hat{\k})$ for the  reference element. In fact, this regularity requirement can be relaxed by using the the tensor product Sobolev spaces,  cf. \cite{schwab,thesis}. In this work,  we do not consider  the minimum regularity assumptions because we only consider  the standard Sobolev spaces.
\end{remark}

\subsection{The $H^1$-projection operator onto  the ${\cal P}_p$ basis} 

Finally, we present the error bound for  $\pi_{{\cal P}_p} $ which we shall define now. The key observation is that the  ${\cal P}_p$ basis with polynomial order $p$ contains the ${\cal S}_{p+1-d}$ basis for $p\geq d$, see \cite{arnold2011serendipity}. Then, we can simply define $\pi_{{\cal P}_p} = \pi_{{\cal S}_{p+1-d}} $ for $d=2,3$.

\begin{corollary}\label{H^1 projector P basis}
Let $\hat{\kappa}=(-1,1)^d$, $d=2,3$. Suppose  that $u\in H^{l+1}(\hat{\kappa})$,
for some $l\ge d-1$. Let  {$\pi_{{\cal P}_p} u : = \pi_{{\cal S}_{p+1-d}} u$} be the $H^1$ projection of $u$ onto $\mathcal{P}_{p}(\hat{\kappa})$ with $p\geq 3d-1$. Then, we have:  {
\begin{align}\label{relation 1 S}
\pi_{{\cal P}_p} u = u \quad \text{at the vertices of $\hat{\k}$},
\end{align}}
and the following error estimates hold:
\begin{align}\label{H1-projector-L2 norm P}
 \norm{u - \pi_{{\cal P}_p} u}{L^2(\hat{\k})}^2 = \norm{u - \pi_{{\cal S}_{p+1
-d}} u}{L^2(\hat{\k})}^2 \leq C(s)\Big(  \frac{d}{p+1-d}\Big)^{2s+2}|u |^2_{H^{s+1}(\hat{\k})}.
\end{align}
and
\begin{align}\label{H1-projector-H1 norm P}
\norm{\nabla (u - \pi_{{\cal P}_p} u)}{L^2(\hat{\k})}^2 = \norm{\nabla (u - \pi_{{\cal S}_{p+1
-d}} u)}{L^2(\hat{\k})}^2  \leq  C(s)\Big(  \frac{d}{p-d}\Big)^{2s}|u |^2_{H^{s+1}(\hat{\k})}.
\end{align}
for any integer $s$, $d-1 \leq s \leq \min\{p+1-d, l\}$,  {$p$ sufficiently large}.
\end{corollary}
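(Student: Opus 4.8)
The plan is to treat this statement as a genuine corollary of Theorems \ref{H^1 projector S basis} and \ref{H^1 projector S basis 3D}, obtained by specialising the serendipity estimates at the reduced polynomial order $P:=p+1-d$. The first thing I would verify is that the definition $\pi_{{\cal P}_p}u:=\pi_{{\cal S}_{p+1-d}}u$ is legitimate, i.e. that the output genuinely lies in $\mathcal{P}_p(\hat{\k})$. This is exactly the inclusion $\mathcal{S}_{p+1-d}(\hat{\k})\subseteq\mathcal{P}_p(\hat{\k})$ recorded just before the statement (cf. \cite{arnold2011serendipity}): the largest total degree carried by any serendipity basis function of order $P$ in $d$ variables is $P+d-1$, so the choice $P=p+1-d$ makes that maximal total degree equal to $p$. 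With the inclusion in hand, $\pi_{{\cal S}_{p+1-d}}u\in\mathcal{S}_{p+1-d}(\hat{\k})\subseteq\mathcal{P}_p(\hat{\k})$, and the vertex identity \eqref{relation 1 S} is inherited verbatim from the vertex properties \eqref{relation 1 S point value} (for $d=2$) and \eqref{relation 1 S point value 3D} (for $d=3$).

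Next I would simply substitute $P=p+1-d$ into the error bounds \eqref{H1-projector-L2 norm S}, \eqref{H1-projector-H1 norm S} (case $d=2$) and \eqref{H1-projector-L2 norm S 3D}, \eqref{H1-projector-H1 norm S 3D} (case $d=3$), all of which take the unified form $\norm{u-\pi_{{\cal S}_P}u}{L^2(\hat{\k})}^2\le C(s)\,(d/(P+1))^{2s+2}\,|u|_{H^{s+1}(\hat{\k})}^2$ and $\norm{\nabla(u-\pi_{{\cal S}_P}u)}{L^2(\hat{\k})}^2\le C(s)\,(d/P)^{2s}\,|u|_{H^{s+1}(\hat{\k})}^2$. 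Substitution produces denominators $P+1=p+2-d$ and $P=p+1-d$. Since $t\mapsto(d/t)^{2s}$ is decreasing for $t>0$, the elementary inequalities $(d/(p+2-d))^{2s+2}\le(d/(p+1-d))^{2s+2}$ and $(d/(p+1-d))^{2s}\le(d/(p-d))^{2s}$ let me pass directly to the slightly looser denominators $p+1-d$ and $p-d$ claimed in \eqref{H1-projector-L2 norm P}--\eqref{H1-projector-H1 norm P}, with $C(s)$ unchanged. The admissibility window is tracked in the same fashion: the serendipity theorems require $d-1\le s\le\min\{P,l\}$, which under $P=p+1-d$ is precisely the stated range $d-1\le s\le\min\{p+1-d,l\}$.

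The only point that needs care---rather than any genuine analytic difficulty---is matching the lower thresholds on $p$. The serendipity results are established for $P\ge4$ when $d=2$ and $P\ge6$ when $d=3$, i.e. $P\ge2d$; under $P=p+1-d$ this is equivalent to $p\ge3d-1$, which is exactly the hypothesis imposed in the corollary. I therefore expect no obstacle beyond this bookkeeping: confirming the serendipity-into-total-degree inclusion, aligning the index shift $P=p+1-d$ in the denominators and in the Sobolev range, and checking that $p\ge3d-1$ activates the admissibility conditions of the two underlying theorems. Once these are lined up, \eqref{H1-projector-L2 norm P} and \eqref{H1-projector-H1 norm P} follow by direct quotation, so the proof reduces to a short verification that the reindexing is consistent in all three ingredients (inclusion, regularity range, polynomial threshold).
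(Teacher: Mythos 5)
Your proposal is correct and is exactly the argument the paper intends: the corollary is stated without a separate proof precisely because it follows from the inclusion $\mathcal{S}_{p+1-d}(\hat{\kappa})\subseteq\mathcal{P}_p(\hat{\kappa})$ and direct substitution of $P=p+1-d$ into Theorems \ref{H^1 projector S basis} and \ref{H^1 projector S basis 3D}. Your bookkeeping of the thresholds ($P\ge 2d \Leftrightarrow p\ge 3d-1$), the Sobolev range, and the slight loosening of the denominators from $p+2-d$ and $p+1-d$ to $p+1-d$ and $p-d$ all match the paper.
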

\begin{remark}
We emphasize that \emph{  the  above error bound for the  {$\pi_{{\cal P}_p}$} projection is  $p$-suboptimal by one order  for $d=2$ and two orders  for $d=3$ for sufficiently smooth  functions, but it is $p$-optimal for functions with finite Sobolev regularity in the case  $l\leq p+1-d$. } However, sub-optimality by one or two orders in $p$ is better than using the $\pi^{\cal Q}_{\ujump{p/d}}$ projection, as suggested by \cite{schwab} (see Corollary 4.52 on p190), which is sub-optimal in $p$ by at least $p/2$ orders for sufficiently smooth functions for $d=2$. Moreover, the  one or two  order sub-optimality in $p$ for analytic functions does not influence the exponent of  the exponential rate of convergence, as we shall see below.
\end{remark}



\section{Exponential convergence for analytic solutions}

We shall be concerned with the  proof of  exponential convergence for serendipity FEMs and DGFEMs with the ${\cal P}_p$ basis over tensor product elements. For simplicity, we only consider the case when the given problem is piecewise analytic over the whole computational domain. Exponential convergence is then achieved by fixing the computational mesh, and increasing the polynomial order $p$. Only  parallelepiped meshes are considered, which are the affine family obtained from  the reference element $\hat{\k}=(-1,1)^d$. The analysis of FEMs and DGFEMs with a general $hp$-refinement strategy is beyond the scope of this analysis (see \cite{MR3351174,schotzau2013hp1,schotzau2013hp2,schotzau2016hp} for the analysis for both methods employing the ${\cal Q}_p$ basis).

The proof of exponential convergence for FEMs and DGFEMs depends on proving exponential convergence of $L^2$- and $H^1$-projections for piecewise analytic functions under $p$-refinement. The $H^1$-projection $\pi_{{\cal S}_p}$  onto ${\cal S}_p$ can be directly applied to $p$-FEMs for second order elliptic problems with the same optimal rate as the $H^1$ projection $\pi_{{\cal Q}_p}$, see \cite{schwab} for details.   For deriving error bounds of DGFEMs using  the $L^2$- and $H^1$- projections onto ${\cal Q}_p$, we refer to \cite{hss,newpaper,thesis}.  \emph{Following similar techniques, we can prove the corresponding  $hp$-bounds for  DGFEMs employing  the ${\cal P}_p$ basis, albeit with sub-optimal rate in $p$. The sub-optimality in $p$ is  due to the fact that the $p$-optimal bound for $L^2$-projection onto  ${\cal P}_p$ basis  over the trace  of the tensor product elements is still open. Additionally, the $H^1$-projection onto the  ${\cal P}_p$ basis  is  suboptimal in $p$ by $d-1$ orders for sufficiently smooth functions}. However, we point out that the suboptimality in $p$ by $d-1$ order, with $d=2,3$, does not influence the exponent of the exponential rate of convergence. 


 
Next, we focus on deriving the exponential convergence for  the $L^2$-projections in the $L^2$--norm and $H^1$-projections in the $L^2$--norm and $H^1$--seminorm   on  analytic problems under $p$-refinement on shape-regular $d$-parallelepiped  meshes. The extension to anisotropic meshes will be consider in the future.
 
 Let $\k$ be a parallelepiped element. For a function $u$ having an analytic extension into an open neighbourhood of $\bar{\k}$,  we have:
\begin{align} \label{analytic extension}
\exists R_\k >0, \quad  {C(u)}>0, \quad \forall s_\k : |u|_{H^{s_\k}(\kappa)} \leq  {C(u)} (R_\k)^{s_\k}\Gamma(s_\k+1) |\k|^{1/2},
\end{align}
 where $|\k|$ denotes the measure of element $\k$, cf. \cite[Theorem 1.9.3]{davis1975interpolation}.  
 \begin{lemma}\label{Exponential convergence p}
 Let $u:\k \rightarrow \mathbb{R}$ have an analytic extension to an open neighbourhood of $\bar{\k}$. Also let $p_\k\geq 0$ and $0\leq s_\k\leq p_\k+1$ be two positive numbers such that $s_\k =\epsilon (p_\k+1) $, $0\leq \epsilon \leq 1$ and $d=2,3$. Then the following bounds hold: 
\begin{align*}
 \hspace{0cm} \norm{u -  \Pi_{{\cal Q}_{p_\k}} u }{L^2({{\kappa}})}^2  \le C  ( h_\k)^{2s_\k} 
 \Phi_1(p_\k+1,s_\k)
|u|_{H^{s_\k}(\hat{\kappa})}^2  
\leq C(u)(p_\k+1) e^{-2 b_{1,\k} (p_\k+1)} |\k|, 
\end{align*}
 and
\begin{align*}
\norm{u -  \Pi_{{\cal P}_{p_\k}} u }{L^2({{\kappa}})}^2  \le   C ( {h_\k})^{2s_\k}
 \Phi_d(p_\k+1,s_\k)
|u|_{H^{s_\k}(\hat{\kappa})}^2
\leq C(u)(p_\k+1) e^{-2 b_{2,\k} (p_\k+1)} |\k|.
\end{align*}
 Here, $C$ and $C(u)$ are  positive constants  depending  elemental shape regularity, and $C(u)$ also depends on $u$.  $F_1(R_\k,\epsilon)  = \frac{(1-\epsilon)^{1-\epsilon}}{(1+\epsilon)^{1+\epsilon}} (\epsilon R_k)^{2\epsilon}$, $\epsilon_{\min} = {1}/{\sqrt{1+R_\k^2}}$, 
 $b_{1,\k}:= \frac{1}{2}|\log F_1(R_\k,\epsilon_{\min}) | + \epsilon_{\min} |\log {h_\k}| $ and  $b_{2,\k}:= b_{1,\k}-  \epsilon_{\min} \log d$.
 \end{lemma}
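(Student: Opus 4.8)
The plan is to feed the analyticity hypothesis \eqref{analytic extension} into the sharp algebraic-in-$p$ projection bounds already proved, and then to optimise over the free Sobolev index $s_\k$. For the first inequality in each display I would transfer the reference-element estimates to the physical parallelepiped $\k$ by the standard affine scaling argument: pulling back to $\hat\k=(-1,1)^d$ produces the factor $(h_\k)^{2s_\k}$ through the $s_\k$-th order seminorm, while Lemma \ref{L2-Q-basis} supplies the factor $\Phi_1(p_\k+1,s_\k)$ for $\Pi_{{\cal Q}_{p_\k}}$ and Theorem \ref{L2-projector-pbasis} supplies $\Phi_d(p_\k+1,s_\k)$ for $\Pi_{{\cal P}_{p_\k}}$; shape regularity keeps the hidden constants $p$-independent, and the measure factor $|\k|$ is produced when the seminorm is finally estimated through \eqref{analytic extension}. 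This reduces the problem to controlling the products $\Phi_1(p_\k+1,s_\k)\,|u|_{H^{s_\k}}^2$ and $\Phi_d(p_\k+1,s_\k)\,|u|_{H^{s_\k}}^2$.

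Next I would insert \eqref{analytic extension}, replacing $|u|_{H^{s_\k}(\k)}^2$ by $C(u)^2(R_\k)^{2s_\k}\Gamma(s_\k+1)^2|\k|$, so that the whole quantity becomes $(h_\k R_\k)^{2s_\k}$ times a pure Gamma-function expression $\Phi_1(p_\k+1,s_\k)\Gamma(s_\k+1)^2$ (respectively with $\Phi_d$). The heart of the proof is the asymptotic evaluation of these expressions via the two-sided Stirling inequalities \eqref{Stirling's formula}. Writing $s_\k=\epsilon(p_\k+1)$ and abbreviating $P=p_\k+1$, a direct substitution shows that all exponential factors $e^{\pm(P\mp s_\k)}$, $e^{\mp 2s_\k}$ cancel identically, the power of $P$ collapses to exactly one, and the surviving base assembles into a clean $P$-th power: one gets $\Phi_1(P,s_\k)\Gamma(s_\k+1)^2\le C\,P\,[\,(1-\epsilon)^{1-\epsilon}(1+\epsilon)^{-(1+\epsilon)}\epsilon^{2\epsilon}\,]^{P}$, so that after multiplying by $(h_\k R_\k)^{2s_\k}=[(h_\k R_\k)^{2\epsilon}]^{P}$ the combination equals $C\,P\,[F_1(R_\k,\epsilon)\,h_\k^{2\epsilon}]^{P}$. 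For $\Phi_d=(\Gamma(\tfrac{P-s_\k}{d}+1)/\Gamma(\tfrac{P+s_\k}{d}+1))^{d}$ the identical computation produces exactly one extra factor $d^{2s_\k}=[d^{2\epsilon}]^{P}$, yielding $C\,P\,[F_1(R_\k,\epsilon)\,h_\k^{2\epsilon}d^{2\epsilon}]^{P}$.

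Finally I would optimise over the admissible index by minimising $\log F_1(R_\k,\epsilon)$ over $\epsilon\in[0,1]$; differentiating gives $\tfrac{d}{d\epsilon}\log F_1=\log\!\big(\epsilon^2 R_\k^2/(1-\epsilon^2)\big)$, whose unique zero is $\epsilon_{\min}=1/\sqrt{1+R_\k^2}$. With this choice the ${\cal Q}$-bound becomes $C\,P\,\exp\!\big(P[\log F_1(R_\k,\epsilon_{\min})+2\epsilon_{\min}\log h_\k]\big)=C(u)(p_\k+1)e^{-2b_{1,\k}(p_\k+1)}|\k|$, where, using $F_1(R_\k,\epsilon_{\min})<1$ and $h_\k<1$ so that both logarithms are negative, one reads off $b_{1,\k}=\tfrac12|\log F_1(R_\k,\epsilon_{\min})|+\epsilon_{\min}|\log h_\k|$. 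The additional factor $d^{2\epsilon_{\min}}$ in the ${\cal P}$-bound shifts the exponent by $+2\epsilon_{\min}\log d$ per unit of $P$, i.e. reduces the rate to $b_{2,\k}=b_{1,\k}-\epsilon_{\min}\log d$, exactly as claimed.

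I expect the main obstacle to be the Stirling step: one must verify \emph{simultaneously} that every exponential term cancels and that, for both $\Phi_1$ and the $d$-fold product $\Phi_d$, the polynomial prefactor is of degree exactly one in $(p_\k+1)$ — this is what yields the benign $(p_\k+1)$ factor rather than a spurious higher power, and it requires careful bookkeeping of the half-integer exponents $x+\tfrac12$ in \eqref{Stirling's formula} together with the $d$-dependence living inside $\Phi_d$. The subsequent convexity and optimisation in $\epsilon$, and the sign conventions turning $F_1<1$ and $h_\k<1$ into the positive exponents $b_{1,\k}$ and $b_{2,\k}$, are then routine.
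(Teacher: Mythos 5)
Your proposal is correct and follows essentially the same route as the paper's own proof: affine scaling plus Lemma \ref{L2-Q-basis}/Theorem \ref{L2-projector-pbasis}, insertion of the analyticity bound \eqref{analytic extension}, Stirling's formula \eqref{Stirling's formula} to collapse the Gamma-function products into $C\,q_\k\,(F_1(R_\k,\epsilon))^{q_\k}$ (with the extra $d^{2\epsilon q_\k}$ for $\Phi_d$), and minimisation over $\epsilon$ at $\epsilon_{\min}=1/\sqrt{1+R_\k^2}$. The only cosmetic difference is that you derive $\epsilon_{\min}$ by explicit differentiation of $\log F_1$, whereas the paper simply states the minimiser; the substance is identical.
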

\begin{proof}
Using standard scaling arguments for $\k$ together with Lemma \ref{L2-Q-basis} and Theorem \ref{L2-projector-pbasis}, we have the approximation results for the  $L^2$-projection over $\k$. For brevity, we set $q_\k=p_\k+1$. By employing  the relation \eqref{Gamma}  and the fact $|u|_{V^{l}({\k})}\leq |u|_{H^{l}({\k})}$, we have the bounds:
\begin{align}
\Phi_1(p_\k+1,s_\k)
  |u|_{H^{s_\k}(\hat{\kappa})}^2 
  &\leq  {C(u)} (R_\k)^{2s_\k}\Gamma(s_\k+1)^2  \frac{\Gamma( {q_\k-s_\k}+1)}{\Gamma( {q_\k+s_\k}+1)} |\k|  \nno \\
&\leq  {C(u)} (R_\k)^{2 \epsilon q_\k} 
\frac{(\epsilon q_\k)^{2\epsilon q_\k+1}}{e^{2\epsilon q_\k}}
  \frac{((1-\epsilon)q_\k)^{(1-\epsilon)q_\k} e^{-(1-\epsilon)q_\k}}{((1+\epsilon)q_\k)^{(1+\epsilon)q_\k} e^{-(1+\epsilon)q_\k}}  |\k|  \nno \\
  &\leq  {C(u)}q_\k (F_1(R_\k,\epsilon))^{q_\k} |\k|, \nno
\end{align}
where
$$
F_1(R_\k,\epsilon)  = \frac{(1-\epsilon)^{1-\epsilon}}{(1+\epsilon)^{1+\epsilon}} (\epsilon R_k)^{2\epsilon}.
$$ 
Recalling \eqref{analytic extension}, we have  $R_\k>0$,
\begin{equation}\label{Fmin}
\min_{0<\epsilon<1} F_1(R_\k,\epsilon) =F_1(R_\k,\epsilon_{\min}) =  \left(\frac{R_\k}{\sqrt{1+R_\k^2}+1}\right)^2 < 1, \quad \epsilon_{\min} = \frac{1}{\sqrt{1+R_\k^2}}.
\end{equation}
Thus, we have 
\begin{align}
\frac{\Gamma( {p_\k-s_\k}+2)}{\Gamma( {p_\k+s_\k}+2)}  
  |u|_{H^{s_\k}(\hat{\kappa})}^2  \leq  {C(u)} q_\k e^{-|\log F_1(R_\k,\epsilon_{\min}) |q_\k} |\k|.
\end{align}
Therefore, we have the  exponential convergence for the $L^2$-projection $\Pi_{{\cal Q}_{p_\k}}$, via
\begin{align}
 \norm{u -  \Pi_{{\cal Q}_{p_\k}} u }{L^2({{\kappa}})}^2 
\leq  {C(u)} ( {p_\k}+1) e^{-2b_{1,\k} (p_\k+1)} |\k|,
\end{align}
with  $b_{1,\k}:= \frac{1}{2}|\log F_1(R_\k,\epsilon_{\min}) | + \epsilon_{\min} |\log {h_\k}| $.  Similarly,  for the $L^2$-projection $\Pi_{{\cal P}_{p_\k}}$,  Stirling's formula implies
\begin{align*}
\Phi_d(p_\k+1,s_\k)
  |u|_{H^{s_\k}(\hat{\kappa})}^2 
  &\leq  {C(u)} (R_\k)^{2s_\k}\Gamma(s_\k+1)^2 
  \Big( \frac{\Gamma( \frac{q_\k-s_\k}{d}+1)}{\Gamma( \frac{q_\k+s_\k}{d}+1)}  \Big)^d
   |\k|   \\
&\leq  {C(u)} (R_\k)^{2 \epsilon q_\k} 
\frac{(\epsilon q_\k)^{2\epsilon q_\k+1}}{e^{2\epsilon q_\k}}
  \frac{((1-\epsilon)q_\k)^{(1-\epsilon)q_\k} (ed)^{-(1-\epsilon)q_\k}}{((1+\epsilon)q_\k)^{(1+\epsilon)q_\k} (ed)^{-(1+\epsilon)q_\k}}  |\k|   \\
  &\leq  {C(u)} q_\k (F_2(R_\k,\epsilon))^{q_\k} |\k|, 
\end{align*}
where,
$$
F_2(R_\k,\epsilon)  = \frac{(1-\epsilon)^{1-\epsilon}}{(1+\epsilon)^{1+\epsilon}} (\epsilon R_kd)^{2\epsilon},
$$
with the minimum,
$$
\min_{0<\epsilon<1} F_2(R_\k,\epsilon) =  \left(\frac{R_\k d}{\sqrt{1+(R_\k d)^2}+1}\right)^2 < 1.
$$
In order to compare with the  slope of projection $\Pi_{{\cal Q}_{p_\k}}$, here we will use the same $\epsilon_{\min}$. We have 
$$
\min_{0<\epsilon<1} F_2(R_\k,\epsilon) \leq F_2(R_\k,\epsilon_{\min}) = F_1(R_\k,\epsilon_{\min}) d^{2 \epsilon_{\min} } .
$$
Thus, we have 
\begin{align}
 \norm{u -  \Pi_{{\cal P}_{p_\k}}u }{L^2({{\kappa}})}^2 
\leq  {C(u)}(p+1) e^{-2 b_{2,\k} (p_\k+1)} |\k|,
\end{align}
with  slope $b_{2,\k}:= \frac{1}{2}|\log F_1(R_\k,\epsilon_{\min}) | + \epsilon_{\min}( |\log {h_\k}| - \log d) $. 
\end{proof}

Next, we begin to derive the exponential convergence for $H^1$-projections.
 \begin{lemma}\label{Exponential convergence p H1}
 Let $u:\k \rightarrow \mathbb{R}$ have an analytic extension to an open neighbourhood of $\bar{\k}$. Also let $p_\k \geq 2d$ and $ (d-1) \leq s_\k \leq p_\k$ be two positive numbers such that $s_\k =\epsilon p_\k $, $0< \epsilon \leq 1$ and $d=2,3$. Then the following bounds hold: 
\begin{align} 
  \norm{u -  \pi_{{\cal Q}_{p_\k}} u }{L^2({{\kappa}})}^2  &\le C  
{( h_\k)^{2s_\k+2}} \Phi_1(p_\k+1,s_\k+1)
|u|_{H^{s_\k+1}(\hat{\kappa})}^2 \leq C(u)p_\k e^{-2 b_{1,\k} p_\k } |\k|, \nno
\end{align}
\begin{align} 
 \norm{u -  \pi_{{\cal S}_{p_\k}} u }{L^2({{\kappa}})}^2  &\le   C ( {h_\k})^{2s_\k+2}
 \Phi_d(p_\k+1,s_\k+1)
|u|_{H^{s_\k+1}(\hat{\kappa})}^2 \leq C(u)p_\k e^{-2 b_{2,\k} p_\k} |\k|,  \nno
\end{align}
 and
\begin{align}
\norm{\nabla (u -  \pi_{{\cal Q}_{p_\k}} u) }{L^2({{\kappa}})}^2  &\le C ( h_\k)^{2s_\k} 
 \Phi_1(p_\k,s_\k)
|u|_{H^{s_\k+1}(\hat{\kappa})}^2 \leq C(u)p_\k^{3} e^{-2 b_{1,\k} p_\k } |\k|, \nno
\end{align}
\begin{align}
\norm{\nabla (u -  \pi_{{\cal S}_{p_\k}} u) }{L^2({{\kappa}})}^2  &\le   C ( {h_\k})^{2s_\k}
 \Phi_d(p_\k,s_\k)
|u|_{H^{s_\k+1}(\hat{\kappa})}^2 \leq C(u)p_\k^{3} e^{-2 b_{2,\k} p_\k} |\k|. \nno
\end{align}
 Here, $C$ and $C(u)$ are  positive constants  depending  elemental shape regularity, and $C(u)$ also depends on $u$.  $F_1(R_\k,\epsilon)  = \frac{(1-\epsilon)^{1-\epsilon}}{(1+\epsilon)^{1+\epsilon}} (\epsilon R_k)^{2\epsilon}$, $\epsilon_{\min} = {1}/{\sqrt{1+R_\k^2}}$, 
 $b_{1,\k}:= \frac{1}{2}|\log F_1(R_\k,\epsilon_{\min}) | + \epsilon_{\min} |\log {h_\k}| $ and  $b_{2,\k}:= b_\k^1-  \epsilon_{\min} \log d$.
 \end{lemma}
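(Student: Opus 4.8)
The plan is to transcribe the proof of Lemma~\ref{Exponential convergence p} almost verbatim, replacing the reference-element $L^2$-projection estimates by the $H^1$-projection estimates of Lemma~\ref{H^1 projector Q basis}, Lemma~\ref{H^1 projector Q basis 3D}, Theorem~\ref{H^1 projector S basis} and Theorem~\ref{H^1 projector S basis 3D}. First I would establish the first (middle) inequality in each of the four bounds by a standard affine scaling from $\hat{\k}$ to $\k$. The reference results are already phrased through $\Phi_1(\cdot,\cdot)$ (for $\pi_{{\cal Q}_p}$) and $\Phi_d(\cdot,\cdot)$ (for $\pi_{{\cal S}_p}$), so pulling back under the affine map $F_\k$ each derivative contributes one power of $h_\k$: the $L^2$-norm estimates acquire the factor $(h_\k)^{2s_\k+2}$ (the right-hand side carries $|u|_{H^{s_\k+1}}$), while the $H^1$-seminorm estimates acquire $(h_\k)^{2s_\k}$, one derivative being consumed by the gradient. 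The leading prefactor $\tfrac{2}{p(p+1)}\Phi_1(p,s)$ in the $L^2$ bounds of Lemma~\ref{H^1 projector Q basis} is absorbed into $\Phi_1(p_\k+1,s_\k+1)$ via the elementary identity $\Phi_1(p+1,s+1)=\tfrac{1}{(p+s+1)(p+s+2)}\Phi_1(p,s)$ together with $s_\k\le p_\k$. The hypotheses $p_\k\ge 2d$ and $d-1\le s_\k\le p_\k$ are exactly what makes the cited reference theorems applicable.

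Next, for the exponential (final) inequality I would insert the analytic-extension estimate \eqref{analytic extension}, namely $|u|_{H^{s_\k+1}(\k)}\le C(u)R_\k^{s_\k+1}\Gamma(s_\k+2)|\k|^{1/2}$, and run the Stirling argument of Lemma~\ref{Exponential convergence p} on the product $\Phi_d(\cdot,\cdot)\,\Gamma(s_\k+2)^2\,R_\k^{2(s_\k+1)}$. The key observation is that, after the relabelling $\tilde s=s_\k+1$, $\tilde q=p_\k+1$, the $L^2$-norm quantities read
\[
\Phi_1(p_\k+1,s_\k+1)\,\Gamma(s_\k+2)^2\,R_\k^{2(s_\k+1)}=R_\k^{2\tilde s}\,\Gamma(\tilde s+1)^2\,\Phi_1(\tilde q,\tilde s),
\]
which is \emph{precisely} the quantity already analysed in Lemma~\ref{Exponential convergence p} (with the shifted index $\tilde s$). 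Hence, setting $s_\k=\epsilon p_\k$ so that $\tilde s/\tilde q\to\epsilon$, the minimisation of $F_1$ reproduces $\epsilon_{\min}=1/\sqrt{1+R_\k^2}$ from \eqref{Fmin} and yields polynomial factor $\tilde q\sim p_\k$ with rate $b_{1,\k}$; the serendipity version replaces $\Phi_1$ by $\Phi_d$, introducing the factor $d^{2\epsilon}$ in $F_2$, and using the \emph{same} $\epsilon_{\min}$ for comparison gives $b_{2,\k}=b_{1,\k}-\epsilon_{\min}\log d$ exactly as in the $\Pi_{{\cal P}_p}$ computation.

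For the two $H^1$-seminorm bounds the only change is that $\Phi_d(p_\k+1,s_\k+1)$ is replaced by $\Phi_d(p_\k,s_\k)$, and
\[
\frac{\Phi_d(p_\k,s_\k)}{\Phi_d(p_\k+1,s_\k+1)}=\Big(\frac{\Gamma(\tfrac{p_\k+s_\k+2}{d}+1)}{\Gamma(\tfrac{p_\k+s_\k}{d}+1)}\Big)^{d}\sim p_\k^{2},
\]
so the exponential rate is unchanged while the polynomial prefactor is multiplied by $p_\k^{2}$, producing the stated $p_\k^{3}$. The main obstacle — really the only place requiring care — is this bookkeeping that cleanly separates polynomial growth from exponential decay, so that all four bounds share one $\epsilon_{\min}$ and hence the rates $b_{1,\k},b_{2,\k}$. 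Concretely one must verify that the extra $\Gamma(s_\k+2)^2R_\k^{2(s_\k+1)}$ coming from the higher $H^{s_\k+1}$-regularity is fully accounted for by the index shift $\tilde s=s_\k+1$ (contributing the same bounded base $F_1,F_2$ and only sub-exponential polynomial factors), and that the replacement $\Phi_d(p_\k+1,s_\k+1)\to\Phi_d(p_\k,s_\k)$ supplies exactly two extra powers of $p_\k$ and no further exponential factor. Everything else is a direct transcription of Lemma~\ref{Exponential convergence p}, with $|u|_{V^{s}}\le|u|_{H^{s}}$ and \eqref{Gamma} used to pass from the $\Phi_d$-form to the final $(d/p_\k)^{2s_\k}$-type constant.
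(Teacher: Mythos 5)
Your proposal is correct and follows essentially the same route as the paper: the paper's own proof of this lemma is a one-line deferral to the techniques of Lemma~\ref{Exponential convergence p}, and you have simply made explicit the bookkeeping it leaves implicit (the affine scaling, the absorption of the $\tfrac{2}{p(p+1)}\Phi_1(p,s)$-type prefactors, the index shift $\tilde s=s_\k+1$, $\tilde q=p_\k+1$ that reduces the Stirling argument to the one already performed, and the ratio $\Phi_d(p_\k,s_\k)/\Phi_d(p_\k+1,s_\k+1)\sim p_\k^2$ accounting for the $p_\k^3$ prefactor in the $H^1$--seminorm bounds). All of these checks are accurate, so no further comparison is needed.
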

 \begin{proof}
 The proof follows by the same techniques used in Lemma \ref{Exponential convergence p}.
 \end{proof}

In the above Lemma \ref{Exponential convergence p} and Lemma \ref{Exponential convergence p H1}, we can see that the  $L^2$--norm error for both $L^2$-projections $\Pi_{{\cal Q}_{p_\k}}$ and $\Pi_{{\cal P}_{p_\k}}$,  and the $L^2$--norm and $H^1$--seminorm error for the $H^1$-projections  $\pi_{{\cal S}_{p_\k}}$ and $\pi_{{\cal Q}_{p_\k}}$ decay exponentially for analytic functions under  $p$-refinement.  If we measure the error against $p$, the exponent $b_{1,\k}$ for the ${\cal Q}_p$ basis  is slightly greater than the exponent  $b_{2,\k}$ for the ${\cal P}_p$ basis and ${\cal S}_p$ basis by a small factor of $(\log d)/{\sqrt{1+R_\k^2}}$.  By using Lemma \ref{Exponential convergence p} and Lemma \ref{Exponential convergence p H1}, we can also derive the following theorem.

\begin{theorem}\label{better slope}
Let $u$ be an analytic function as defined in \eqref{analytic extension}, and exponent $b_{1,\k}$ and $b_{2,\k}$  defined in Lemma \ref{Exponential convergence p}   Then, there exists $C>0$ such that  following bounds hold:
\begin{align} \label{exponential Q dof-refine}
 \norm{u -  \Pi_{{\cal Q}_{p_\k}} u }{L^2({{\kappa}})}^2  
\leq C e^{-2 b_{1,\k} \sqrt[d]{Dof}} ,
\end{align} 
\begin{align}  \label{exponential P dof-refine}
 \norm{u -  \Pi_{{\cal P}_{p_\k}} u }{L^2({{\kappa}})}^2 
\leq Ce^{-2( b_{2,\k}  \sqrt[d]{d!}) \sqrt[d]{Dof}} ,
\end{align} 
 and 
\begin{align}  \label{exponential Q dof-refine H1 L2}
 \norm{u -  \pi_{{\cal Q}_{p_\k}} u }{L^2({{\kappa}})}^2  
\leq C e^{-2 b_{1,\k} \sqrt[d]{Dof}} ,
\end{align} 
\begin{align}  \label{exponential S dof-refine H1 L2}
 \norm{u -  \pi_{{\cal S}_{p_\k}} u }{L^2({{\kappa}})}^2 
\leq Ce^{-2(b_{2,\k}  \sqrt[d]{d!}) \sqrt[d]{Dof}} ,
\end{align} 
 and 
\begin{align}  \label{exponential Q dof-refine H1 H1}
 \norm{\nabla (u -  \pi_{{\cal Q}_{p_\k}} u) }{L^2({{\kappa}})}^2  
\leq C e^{-2 b_{1,\k} \sqrt[d]{Dof}} ,
\end{align} 
\begin{align}  \label{exponential S dof-refine H1 H1}
 \norm{ \nabla (u -  \pi_{{\cal S}_{p_\k}} u) }{L^2({{\kappa}})}^2 
\leq Ce^{-2(b_{2,\k}  \sqrt[d]{d!}) \sqrt[d]{Dof}}.
\end{align} 
\end{theorem}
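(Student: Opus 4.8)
The plan is to deduce Theorem \ref{better slope} from the $p$-exponential estimates of Lemma \ref{Exponential convergence p} and Lemma \ref{Exponential convergence p H1} purely by inserting, for each basis, the correct relation between the polynomial degree $p_\k$ and the number of degrees of freedom $Dof$. Since the $p$-estimates already exhibit exponential decay in $p_\k$, the whole argument reduces to elementary dimension counting together with the observation that a factor polynomial in $p_\k$ is harmless against an exponential.

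First I would record the dimension of each local space on $\hat{\k}$ as a function of $p_\k$. For the tensor-product space one has exactly $Dof=\dim\mathcal{Q}_{p_\k}(\hat{\k})=(p_\k+1)^d$, so that $p_\k+1=\sqrt[d]{Dof}$. For the total-degree space, $Dof=\dim\mathcal{P}_{p_\k}(\hat{\k})=\binom{p_\k+d}{d}\le (p_\k+d)^d/d!$, whence $p_\k+1\ge \sqrt[d]{d!}\,\sqrt[d]{Dof}-(d-1)$. For the serendipity space I would use the dimension count \eqref{serendipity space 3D} in three dimensions and the analogue following from \eqref{serendipity space 2D} in two dimensions; in both cases the leading term is $p_\k^d/d!$, so that $Dof(\mathcal{S}_{p_\k})\le (p_\k+c)^d/d!$ for a fixed $c$ and all sufficiently large $p_\k$, giving the same lower bound $p_\k+1\ge \sqrt[d]{d!}\,\sqrt[d]{Dof}-C'$.

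Next I would substitute these relations into the $p$-exponential estimates. For $\Pi_{\mathcal{Q}_{p_\k}}$ and $\pi_{\mathcal{Q}_{p_\k}}$, replacing $p_\k+1$ (respectively $p_\k$) by $\sqrt[d]{Dof}$ in the exponent $e^{-2b_{1,\k}(\cdot)}$ yields directly \eqref{exponential Q dof-refine}, \eqref{exponential Q dof-refine H1 L2} and \eqref{exponential Q dof-refine H1 H1}. For $\Pi_{\mathcal{P}_{p_\k}}$ and $\pi_{\mathcal{S}_{p_\k}}$ the inequality $p_\k\ge \sqrt[d]{d!}\,\sqrt[d]{Dof}-C'$ gives $e^{-2b_{2,\k}p_\k}\le e^{2b_{2,\k}C'}\,e^{-2(b_{2,\k}\sqrt[d]{d!})\sqrt[d]{Dof}}$, where the additive shift produces only a multiplicative constant that is absorbed into $C$; this is exactly the improved exponent $b_{2,\k}\sqrt[d]{d!}$ of \eqref{exponential P dof-refine}, \eqref{exponential S dof-refine H1 L2} and \eqref{exponential S dof-refine H1 H1}. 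The algebraic prefactors $(p_\k+1)$ and $p_\k^3$ in Lemma \ref{Exponential convergence p} and Lemma \ref{Exponential convergence p H1}, as well as the factor $|\k|$, are polynomial in $\sqrt[d]{Dof}$ and hence dominated by the exponential; using $(p_\k+1)^m\le C_\delta e^{2\delta(p_\k+1)}$ for any $\delta>0$ they are folded into $C$ at the price of an arbitrarily small reduction of the exponent, which does not affect the asymptotic rate being reported.

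The only genuinely delicate point, and indeed the source of the whole phenomenon, is the \emph{leading-order} dimension count: $\dim\mathcal{Q}_{p_\k}\sim p_\k^d$ carries leading coefficient $1$, whereas both $\dim\mathcal{P}_{p_\k}$ and $\dim\mathcal{S}_{p_\k}$ carry leading coefficient $1/d!$. Consequently, for a \emph{fixed} budget of degrees of freedom the reduced-cardinality bases admit a polynomial degree larger by the factor $\sqrt[d]{d!}$; since the per-degree exponents are comparable, in fact $b_{2,\k}=b_{1,\k}-\epsilon_{\min}\log d$, the $\sqrt[d]{d!}$ gain more than compensates the slightly smaller $b_{2,\k}$ and produces the steeper exponential decay claimed in the abstract. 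I would therefore present the serendipity dimension estimate with care, as it is the step specific to the reduced basis that cannot simply be reduced to the $\mathcal{Q}_{p_\k}$ case, while the remaining manipulations are routine.
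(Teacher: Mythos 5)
Your proposal is correct and follows essentially the same route as the paper: the paper's own proof consists precisely of recording the dimension counts \eqref{dof Q}, \eqref{dof P} and \eqref{dof S} and then invoking Lemma \ref{Exponential convergence p} and Lemma \ref{Exponential convergence p H1}. Your write-up is in fact more careful than the paper's, since you make explicit the inversion $p_\k+1\geq \sqrt[d]{d!}\,\sqrt[d]{Dof}-C'$ and the absorption of the algebraic prefactors $(p_\k+1)$ and $p_\k^3$ into the constant, points the paper leaves implicit.
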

\begin{proof}
By recalling the relationship between degrees of freedom and polynomial order $p$  for both the ${\cal Q}_p$ and ${\cal P}_p$ bases, we have 
\begin{align} \label{dof Q}
Dof({\cal Q}_p) = (p+1)^d,
\end{align} 
and 
\begin{align} \label{dof P}
 Dof({\cal P}_p) = {{p+d}\choose{d}} =\frac{(p+1)^d}{d!} +{\cal O}((p+1)^{d-1}).
\end{align} 
Then,  \eqref{exponential Q dof-refine} and \eqref{exponential P dof-refine} follow from   Lemma \ref{Exponential convergence p}.  

By using relations \eqref{serendipity space 2D} and \eqref{serendipity space 3D}, we have the asymptotic relation 
\begin{align} \label{dof S}
Dof({\cal S}_p) \approx  \frac{p^d}{d!} +{\cal O}(p^{d-1}).
\end{align} 
The relations \eqref{exponential Q dof-refine H1 L2}, \eqref{exponential S dof-refine H1 L2}, \eqref{exponential Q dof-refine H1 H1} and  \eqref{exponential S dof-refine H1 H1} follow from the Lemma \ref{Exponential convergence p H1}.  
\end{proof}

For $d=2, 3$, if the following condition  
\begin{align} \label{small mesh and smooth}
 \frac{1}{2}|\log F_1(R_\k,\epsilon_{\min}) |  + \epsilon_{\min} |\log {h_\k}|  \gg \epsilon_{\min}\log d,
\end{align} 
holds, then we have $b_{2,\k}  \approx b_{1,\k}$.  It is easy to see that  for  small $R_\k$ or  small mesh size $h$, the condition \eqref{small mesh and smooth} will be satisfied. Moreover, we point out that an analytic function having  sufficiently small  $R_\k$  is  equivalent to    the function having an analytic continuation into a sufficiently large open neighbourhood of $\bar{\k}$, see \cite{davis1975interpolation} for details.

 Now, if we consider the error in terms of $\sqrt[d]{Dof}$ for the above bounds, the exponent for the  exponential convergence rate of the  ${\cal P}_p$ basis and the ${\cal S}_p$ basis are larger than the exponent  for the   ${\cal Q}_p$ basis   by a fixed factor of $\sqrt[d]{d!}$.

We have observed  a steeper slope in error against $\sqrt[d]{Dof}$ for FEMs with ${\cal S}_p$ basis and DGFEMs with ${\cal P}_p$ basis. For $d=2$, this suggests a typical ratio between convergence  slopes of DGFEMs with ${\cal P}_p$ and ${\cal Q}_p$ basis, FEMs with ${\cal S}_p$ and ${\cal Q}_p$ basis,  to be $\sqrt{2!}\approx 1.414$. For $d=3$, this ratio is  $\sqrt[3]{3!}\approx 1.817$.  The numerical examples in Section \ref{Numerical examples} show that the ratio is slightly worse than the ideal ratio, but it is not far from the ideal ratio. 


\section{Numerical examples} \label{Numerical examples}

We present some numerical examples to confirm the theoretical analysis in the previous  sections.   {All the numerical examples are computed by Matlab on the High Performance Computing facility ALICE of the University of Leicester.}  For simplicity of presentation,  we use DGFEM(P) and DGFEM(Q) to denote the DGFEMs with local polynomial basis consisting of either ${\cal P}_p$ or ${\cal Q}_p$ polynomials and  use  FEM(S) and FEM(Q) to denote the FEMs with local polynomial basis consisting of either ${\cal S}_p$ or ${\cal Q}_p$ polynomials. 

The comparisons are mainly made between the slope of FEM(S) and FEM(Q)  over  square meshes for $d=2$ and hexahedral  meshes for $d=3$ under $p$-refinement.  The slopes of the convergence lines are calculated by taking the average of the last two slopes of the  line segments of each convergence line. We will also present an example comparing  DGFEM(P)  and DGFEM(Q). For more numerical examples for DGFEMs, see \cite{D17,Zhaonan}. 

%
%
%

\subsection{Example 1}
In the first example, we investigate the  computational efficiency of  DGFEM(P) and DGFEM(Q) schemes. To this end, we consider a partial differential equation with nonnegative characteristic form of mixed type. Let $\Omega=(-1,1)^2$, and consider the PDE problem:
\begin{equation}
\begin{cases}
 -x^2u_{yy} + u_x+u = 0, &\quad \text{for } -1 \leq x \leq 1, y > 0 ,\\
u_x+u = 0, &\quad \text{for } -1 \leq x \leq 1, y \leq 0 ,
\end{cases}
\end{equation}
with exact solution:
\begin{equation}
u(x,y) =
\begin{cases}
\sin (\frac{1}{2}\pi (1+y)) \exp(-(x+\frac{\pi^2x^3 }{12})),& \text{for } -1 \leq x \leq 1, y > 0 ,\\
\sin (\frac{1}{2}\pi (1+y)) \exp(-x), &\text{for } -1 \leq x \leq 1, y\leq 0.
\end{cases}
\end{equation}
This problem is hyperbolic in the region $y\leq 0$ and parabolic for $y > 0$. 
In order to ensure continuity of the normal flux across $y=0$, where the partial differential
equation changes type, the exact solution has a discontinuity across the line $y=0$,
cf. \cite{cangiani2015hp,thesis}.


\begin{figure}[!t]
\centering
\includegraphics[width=0.45\linewidth]{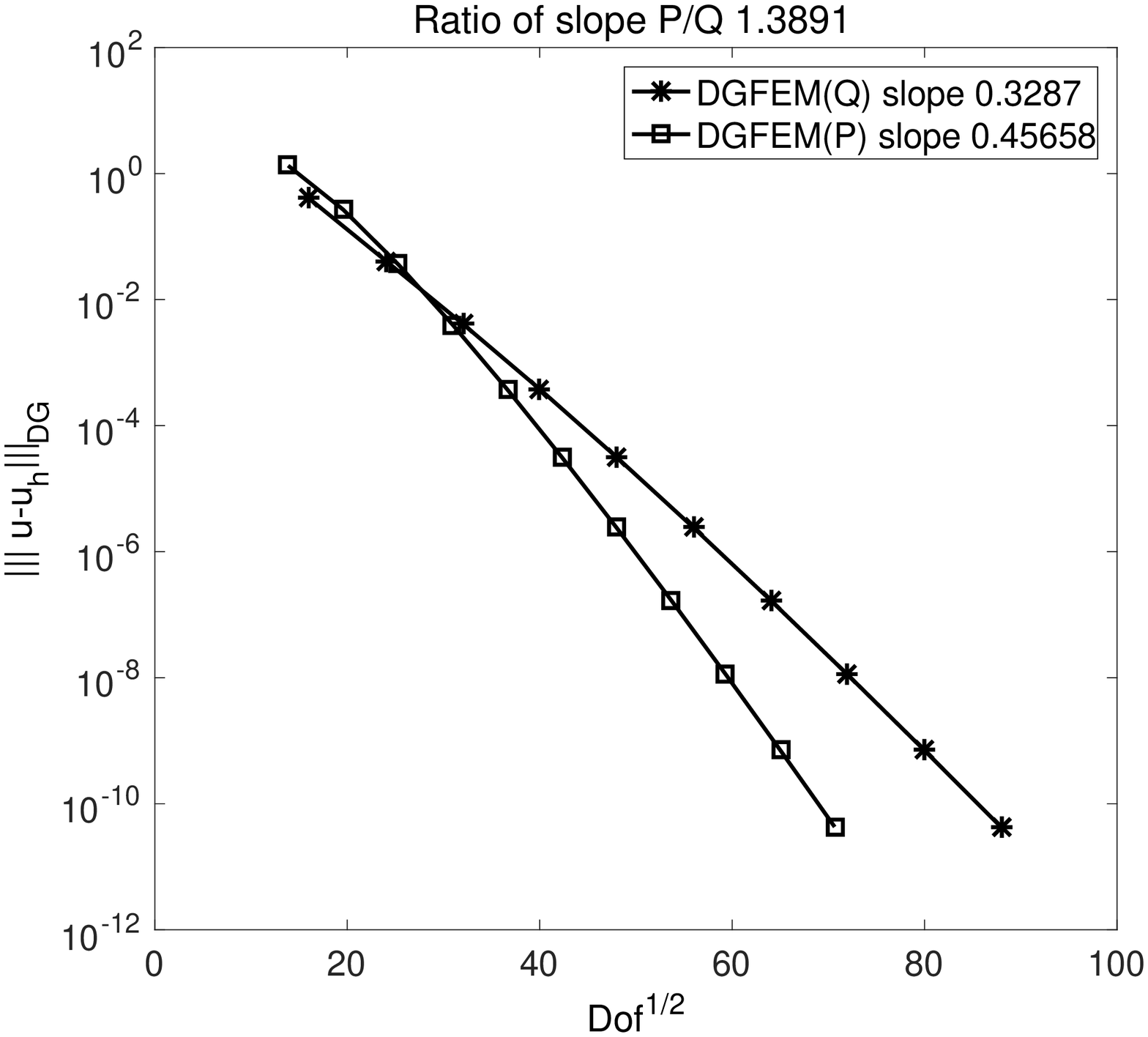} 
\hspace{0cm}
\includegraphics[width=0.45\linewidth]{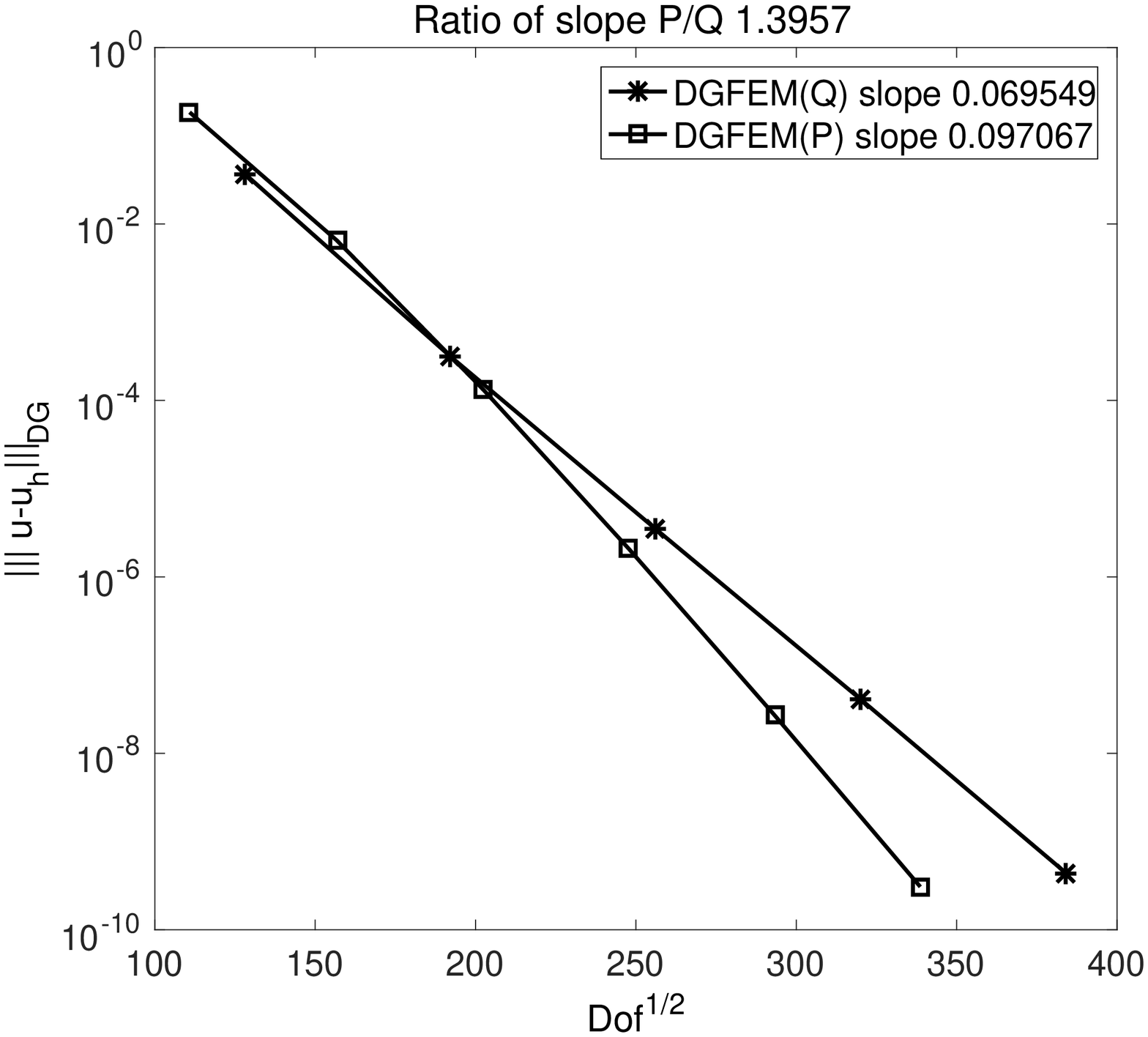}
\\
\includegraphics[width=0.45\linewidth]{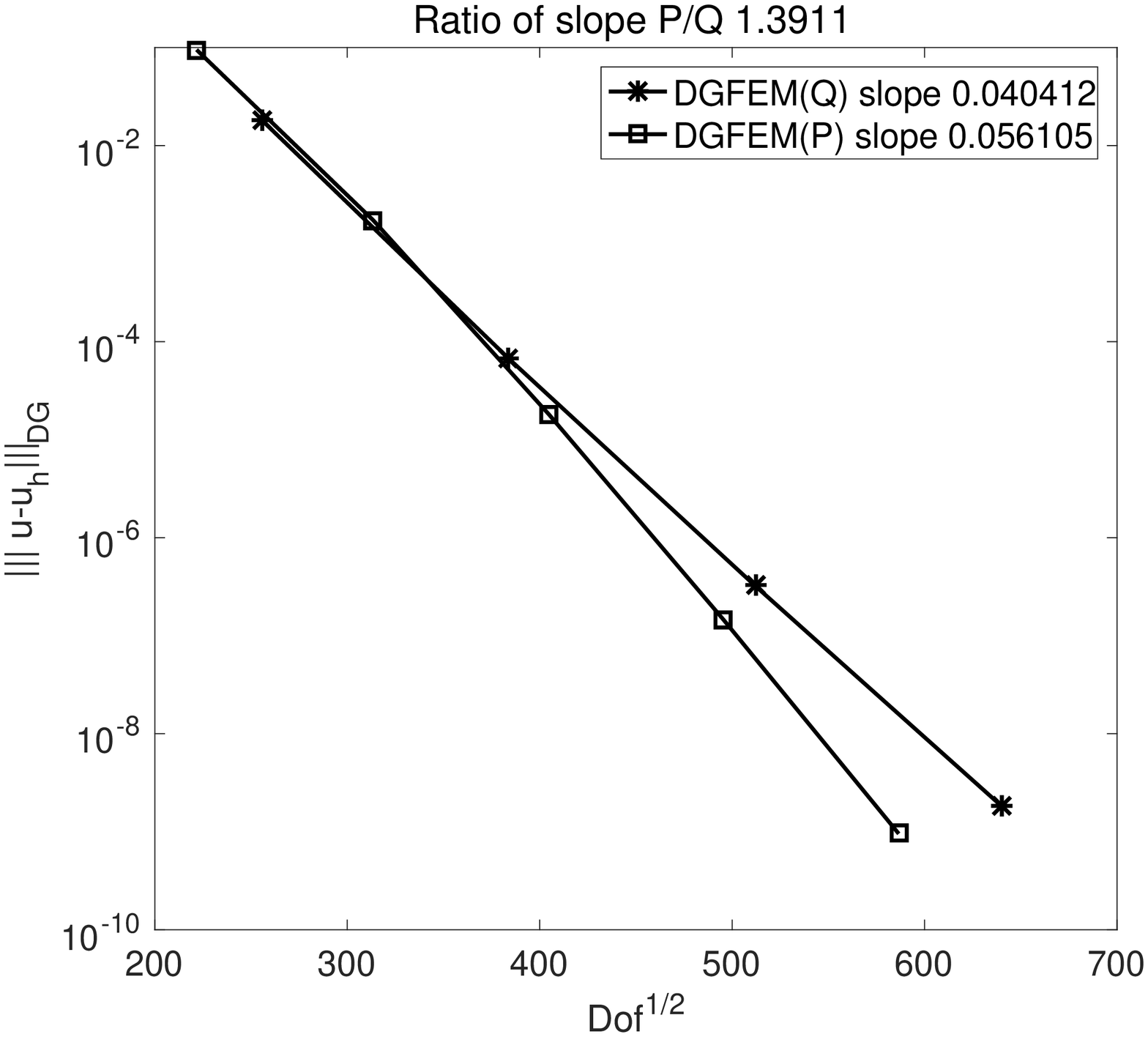}
\caption{Example 1: Convergence of the DGFEMs   under $p$-refinement on uniform square elements   {($\ncdg{u - u_h}$).
  $8\times8$ mesh (left); $64\times64$ mesh (right); $128\times128$ mesh (bottom). }}\label{ch7:Convection-diffusion} 
\end{figure}

By following \cite{cangiani2015hp}, we use the symmetric interior penalty DGFEMs    employing  a special class of quadrilateral meshes for which the discontinuity in the exact solution lies on  element interfaces. In this setting,   we modify the discontinuity-penalization parameter $\sigma$,  
so that $\sigma$ vanishes on edges which form part of the interface $y=0$; this ensures that the (physical) discontinuity present in the exact solution is not penalized within by the numerical scheme.  

In this case, the exact solution is piecewise analytic on the  two parts of the domain. In Figure \ref{ch7:Convection-diffusion},  we  observe that  { the DG--norm $\ncdg{u - u_h}$  decays exponentially  for both DGFEM(P)  and DGFEM(Q) under $p$-refinement on $64$, $4096$ and $16384$ uniform square elements. The definition of DG--norm $\ncdg{\cdot}$  can be found in \cite{cangiani2015hp}. 
}Moreover,  the slope of the convergence line  for the DGFEM(P)  is greater than the line of  DGFEM(Q)  in error  against $\sqrt{Dof}$. The ratio between the two slopes is about $1.39$ on coarse meshes and fine meshes.   {The numerical observation confirms the theoretical results in Theorem \ref{better slope}. }

\subsection{Example 2}

In the second example, we investigate the computational efficiency of  FEM(S) and FEM(Q) on standard tensor-product elements
(quadrilaterals in 2D and hexahedra in 3D).

Firstly, we consider the following two--dimensional Poisson  problem: let $\Omega=(0,1)^2$ and select $f=2 \pi^2 \sin(\pi x) \sin(\pi y)$, so that the exact solution
 is given by $u = \sin(\pi x) \sin(\pi y)$.

In this case, the exact solution is piecewise analytic on the domain. In Figure \ref{ch7:Elliptic 2D FEM},  we  observe that  the $H^1$--seminorm  {$|u-u_h|_{H^1(\Omega)}$ decays exponentially  for both FEM(S)  and FEM(Q) under $p$-refinement on  $64$, $4096$ and $16384$ uniform square elements.} Again, we observe that the slope of the convergence line for the FEM(S)  is greater than the line of  FEM(Q)  in error  against $\sqrt{Dof}$. The ratio between the two slopes is about $1.39$ on coarse meshes and fine meshes.

%

\begin{figure}[t]
\centering
\includegraphics[width=0.45\linewidth]{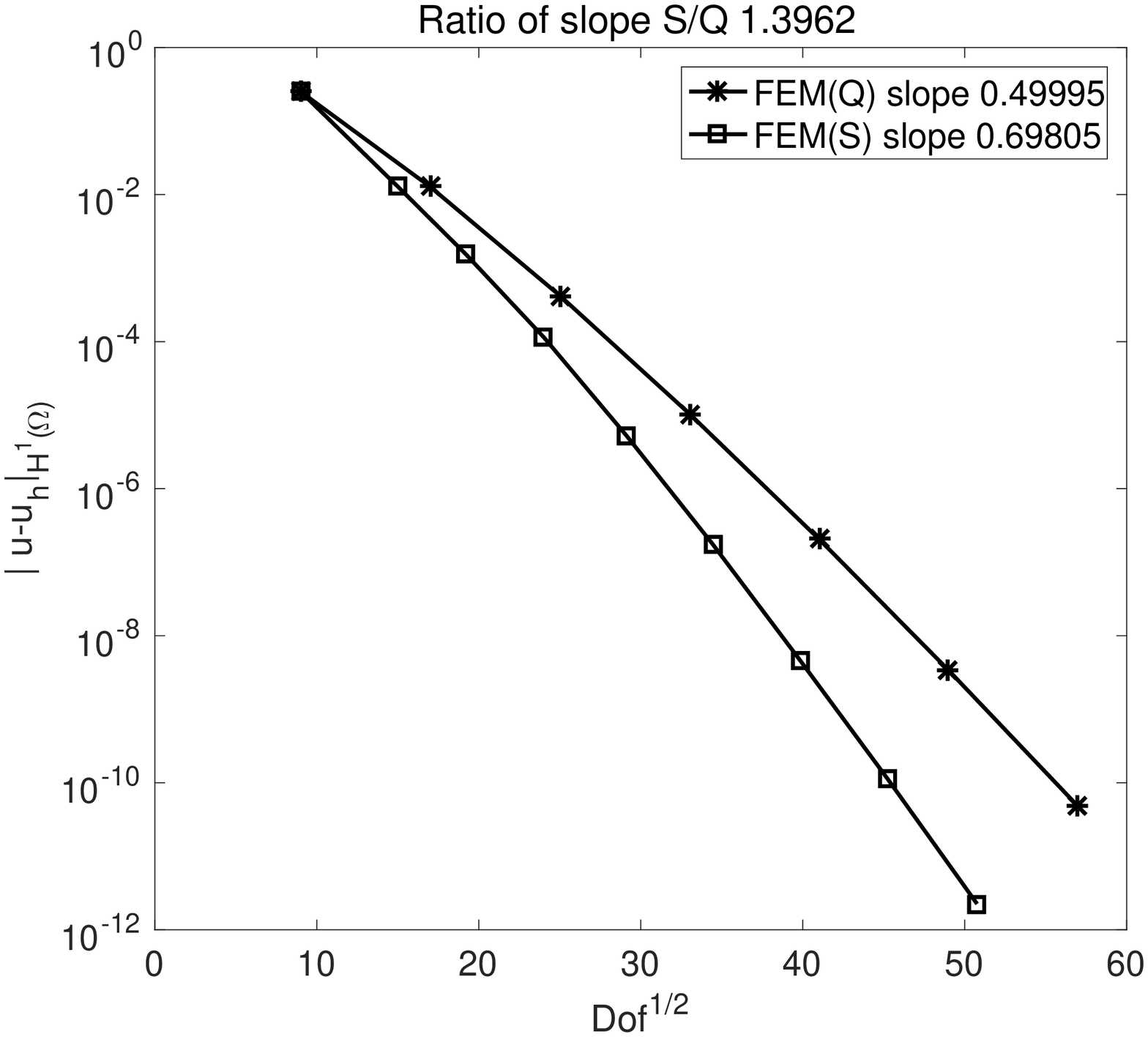}
\hspace{0cm}
\includegraphics[width=0.45\linewidth]{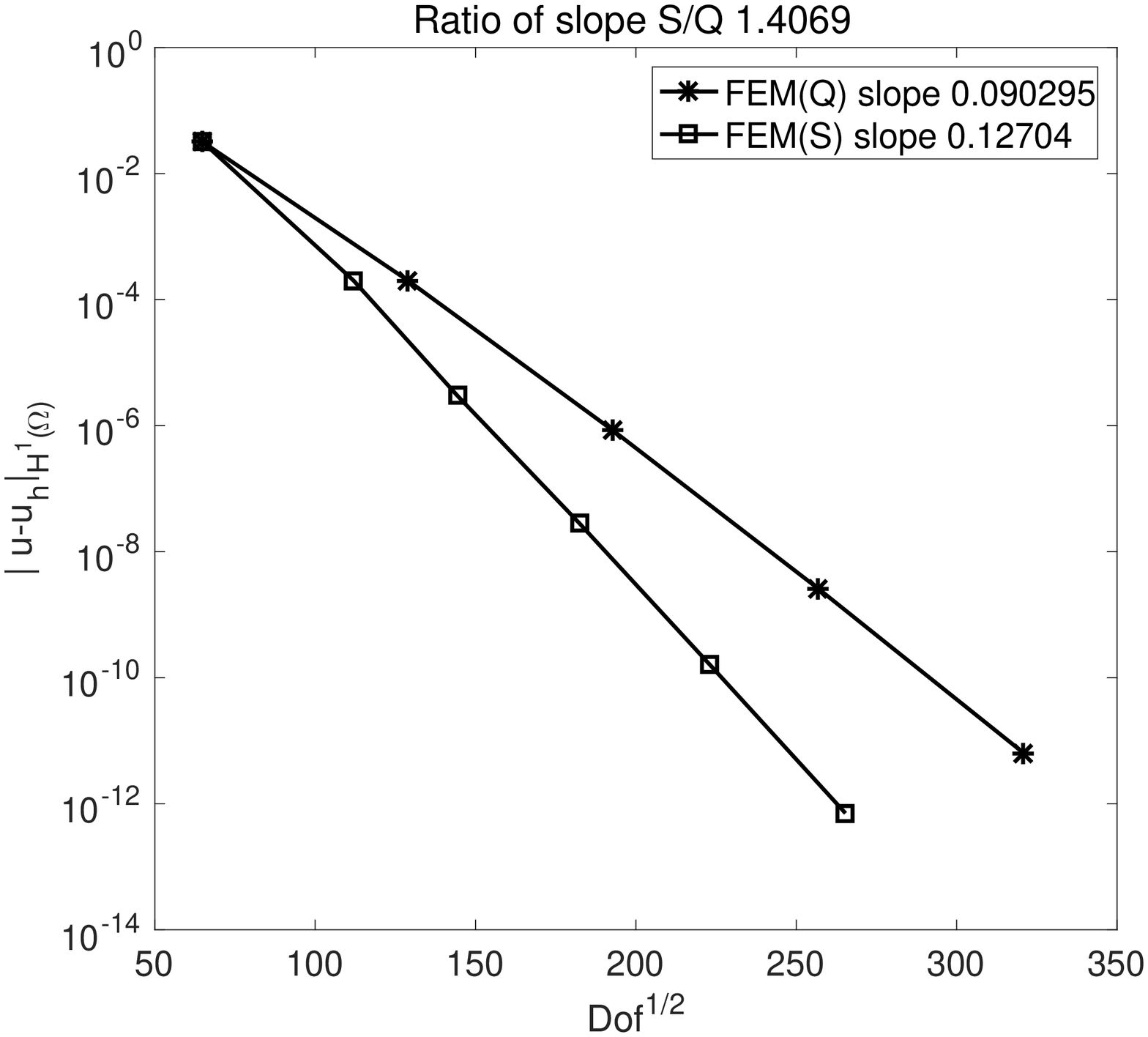} 
\\
\includegraphics[width=0.45\linewidth]{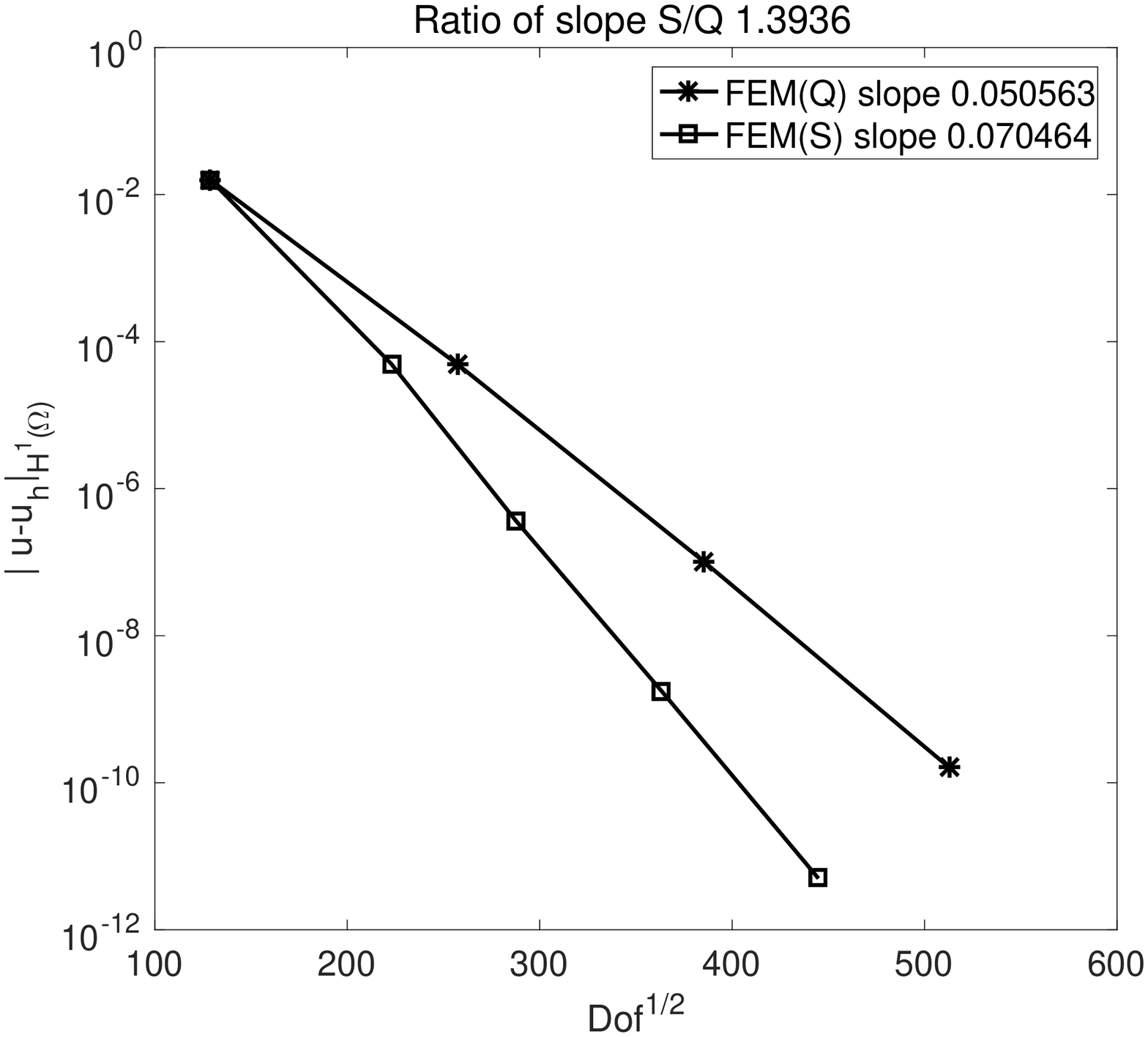} 
\caption{Example 2: Convergence of the FEMs under $p$-refinement on uniform square  elements. 
  {($|u-u_h|_{H^1(\Omega)}$).
  $8\times8$ mesh (left); $64\times64$ mesh (right); $128\times128$ mesh (bottom). }}\label{ch7:Elliptic 2D FEM} 
\end{figure}

%

\begin{figure}[t]
\centering
\includegraphics[width=0.45\linewidth]{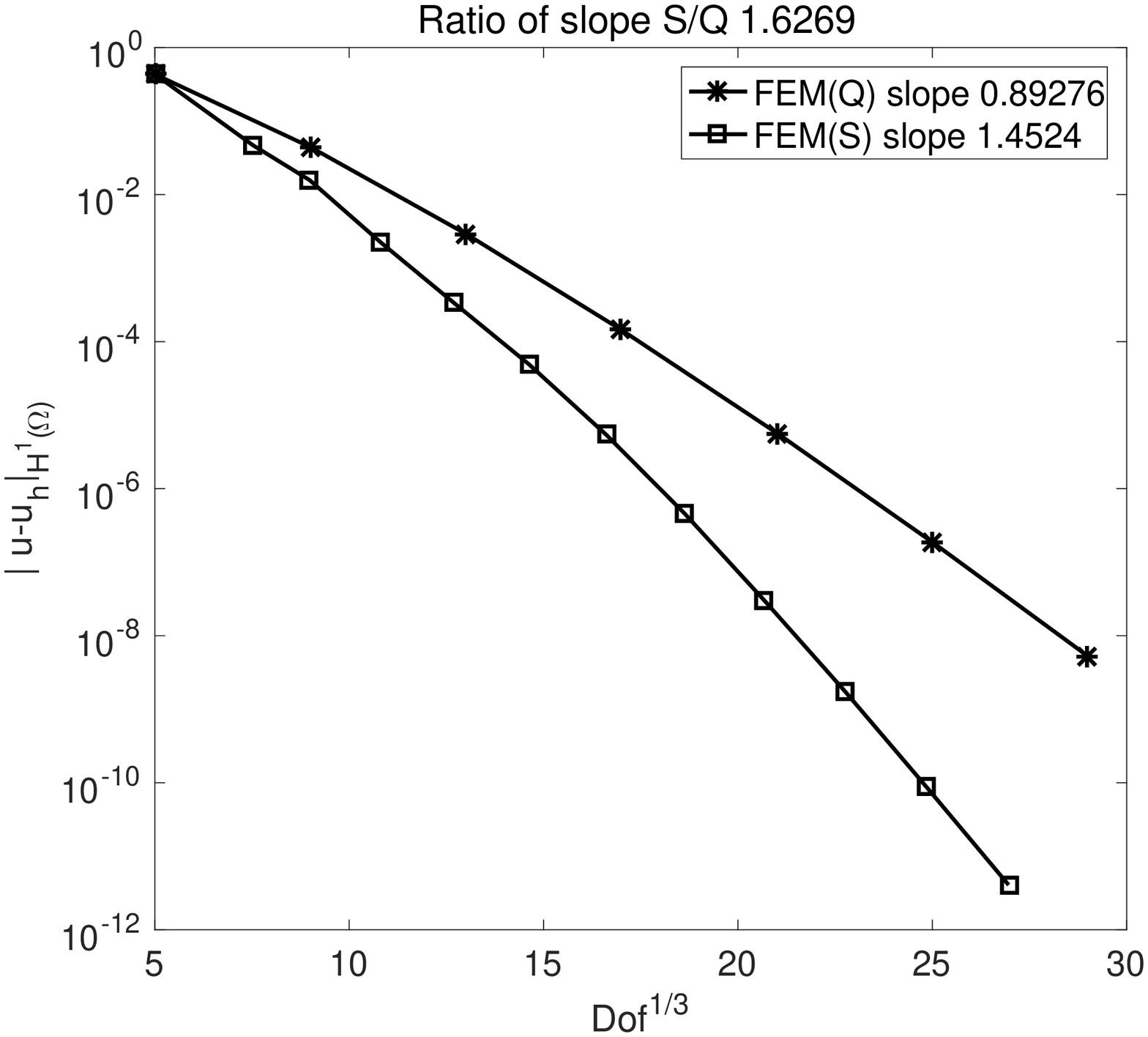} 
\hspace{0cm}
\includegraphics[width=0.45\linewidth]{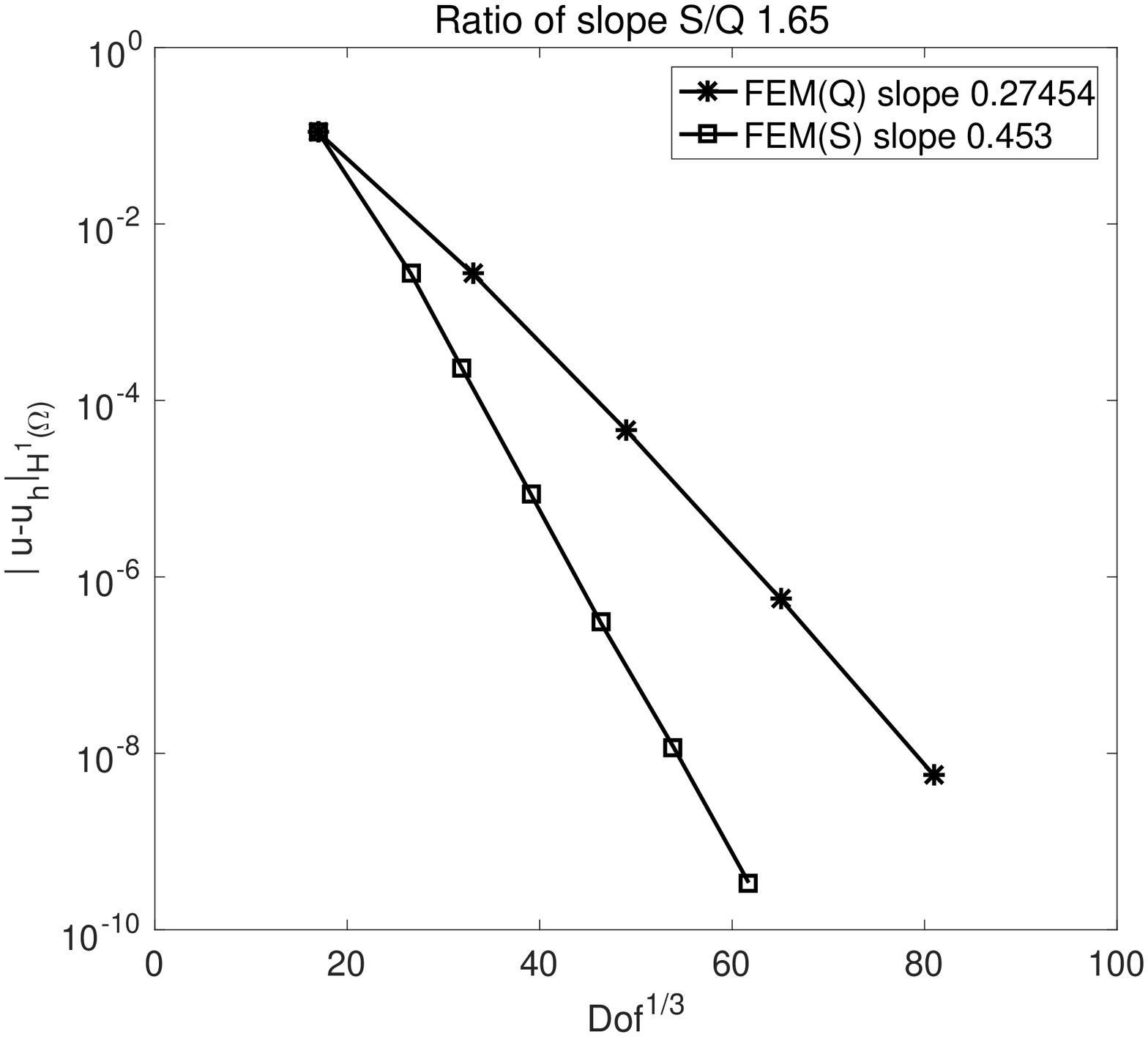}
\\
\includegraphics[width=0.45\linewidth]{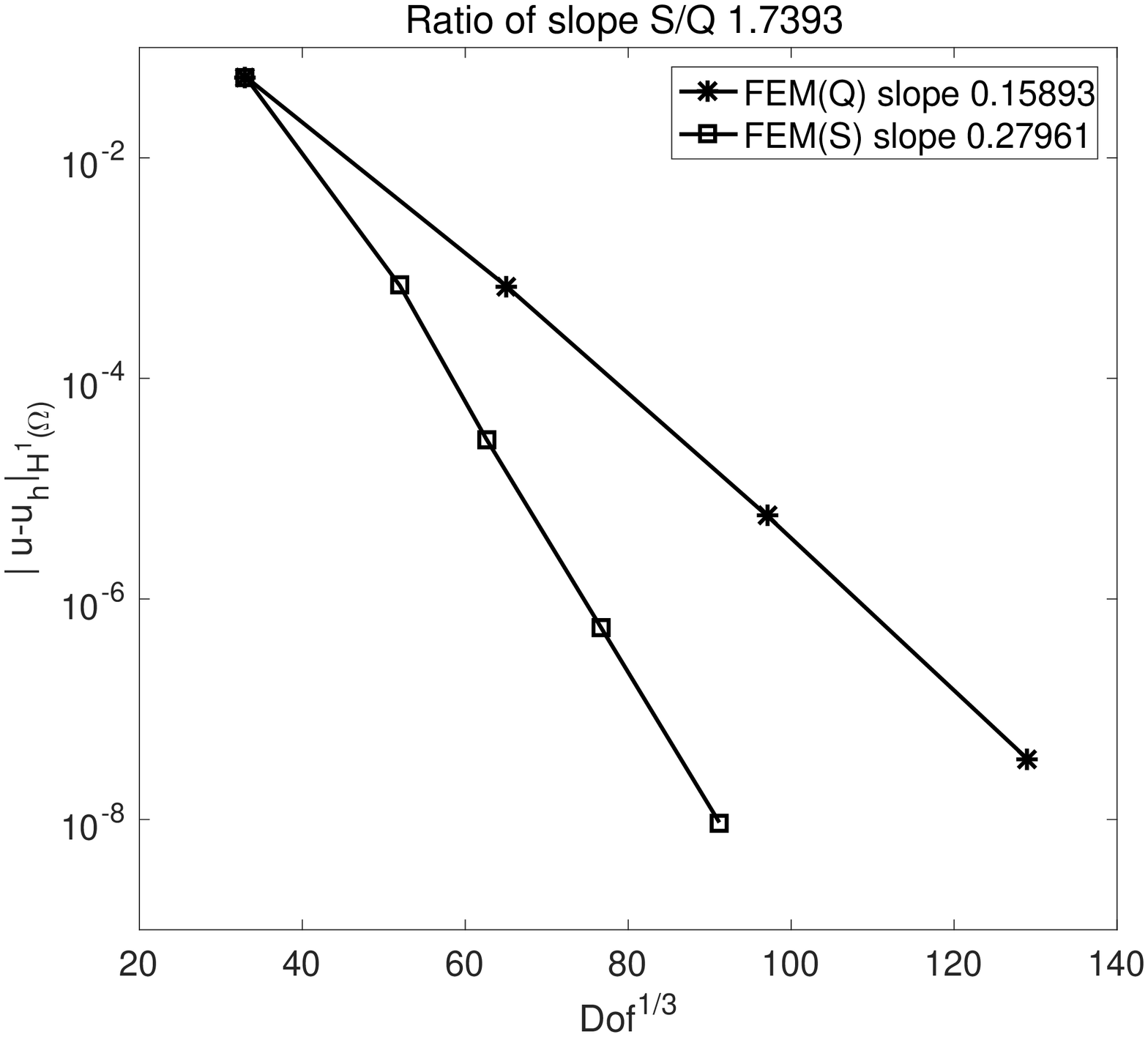}
\caption{Example 2: Convergence of the FEMs under $p$-refinement on uniform hexahedral elements.  {($|u-u_h|_{H^1(\Omega)}$). $4\times4\times4$ mesh (left); $16\times16\times16$ mesh (right);  $32\times32\times32$ mesh (bottom).} }\label{ch7:Elliptic 3D FEM}
\end{figure}

We now consider the three--dimensional variant of the above problem. Let $\Omega=(0,1)^3$ and select $f=3 \pi^2 \sin(\pi x) \sin(\pi y) \sin(\pi z)$, so that the exact solution is given by $u = \sin(\pi x) \sin(\pi y) \sin(\pi z)$. 

 In Figure \ref{ch7:Elliptic 3D FEM},  we  observe that  the $H^1$--seminorm  {$|u-u_h|_{H^1(\Omega)}$ decays exponentially  for both FEM(S)  and FEM(Q) under $p$-refinement on $64$, $4096$ and $32768$ uniform hexahedral elements.} Moreover, we observe that the slope of the convergence line for the FEM(S)  is greater than the line of  FEM(Q)  in error  against $\sqrt[3]{Dof}$. The ratio between the two slopes is about $1.62$ on coarse meshes and  $1.73$ on fine meshes.  {The numerical observation confirms the theoretical results in Theorem \ref{better slope}.}

\subsection{Example 3}

In the third example, we investigate the convergence behaviour of the  FEM(S) and FEM(Q) approaches for the Poisson problem on a non-smooth domain with  fixed computational meshes under $p$-refinement. To this end, we let $\Omega$ be the L-shaped domain
$(-1,1)^2\setminus [0,1)\times (-1,0]$.   Uniform square meshes consisting of $12$ elements are used. 
Then, writing $(r,\varphi)$ to denote the system of polar coordinates, we
impose an appropriate inhomogeneous boundary condition for $u$
so that
$$
u = r^{2/3} \sin(2\varphi/3);
$$
cf. \cite{Wihler2002}. We note that $u$ is analytic in
$\overline{\Omega}\setminus\{{\bf 0}\}$, but $\nabla u$ is
singular at the origin; indeed, here $u \not\in H^2(\Omega)$.
This example reflects the
typical (singular) behaviour that solutions of elliptic boundary value
problems exhibit in the vicinity of reentrant corners in the computational
domain.

In fact, $u \in H^{\frac{5}{3}-\epsilon}(\Omega)$, for any $\epsilon>0$.  We investigate  the convergence rate of the FEM(S) and FEM(Q) under $p$-refinement for this problem.  In Table \ref{p-rate}, we list the  $H^1$--seminorm error and also the convergence rate of FEM(S) and FEM(Q) with polynomial order $p=1,\dots,60$. We point out that due to the singularity at the origin,  geometrically graded quadrature points towards  the origin are used in order to get the desired accuracy (see \cite{chernov2011exponential}).

 \begin{table}[!htb] 
\begin{center}
\begin{small}
\begin{tabular}{|c|c|c|c|c|c|} 
 \hline  
$p$  & \multicolumn{2}{|c|}{FEM(S) }   &  \multicolumn{2}{|c|}{FEM(Q) } & Ratio of Error \\
 \hline   
   & $|{u-u_h}|_{H^1(\Omega)}$   &  $p$-\text{rate} &$|{u-u_h}|_{H^1(\Omega)}$&$p$-\text{rate}& FEM(S)/FEM(Q) \\
 \hline   
1	&	2.09E-01	&		&	2.09E-01	&		&	1	\\
\hline
2	&	1.25E-01	&	0.7386	&	9.62E-02	&	1.1204	&	1.303	\\
\hline
3	&	1.20E-01	&	0.1096	&	5.99E-02	&	1.1691	&	2.0023	\\
\hline
4	&	9.00E-02	&	0.9971	&	4.23E-02	&	1.2087	&	2.128	\\
\hline
5	&	6.93E-02	&	1.1703	&	3.21E-02	&	1.2372	&	2.16	\\
\hline
10	&	2.96E-02	&	1.261	&	1.32E-02	&	1.2968	&	2.2311	\\
\hline
15	&	1.76E-02	&	1.2921	&	7.79E-03	&	1.3143	&	2.2558	\\
\hline
20	&	1.21E-02	&	1.306	&	5.33E-03	&	1.3215	&	2.2675	\\
\hline
25	&	9.03E-03	&	1.3135	&	3.97E-03	&	1.3251	&	2.2741	\\
\hline
30	&	7.10E-03	&	1.3181	&	3.12E-03	&	1.3272	&	2.2783	\\
\hline
35	&	5.79E-03	&	1.3211	&	2.54E-03	&	1.3285	&	2.2811	\\
\hline
40	&	4.86E-03	&	1.3232	&	2.13E-03	&	1.3294	&	2.2832	\\
\hline
45	&	4.15E-03	&	1.3247	&	1.82E-03	&	1.33	&	2.2847	\\
\hline
50	&	3.61E-03	&	1.3259	&	1.58E-03	&	1.3305	&	2.2859	\\
\hline
55	&	3.18E-03	&	1.3268	&	1.39E-03	&	1.3309	&	2.2868	\\
\hline
60    &      2.84E-03	&	1.3275	&	1.24E-03	&	1.3312	&	2.2876	\\
\hline   
\end{tabular}  
\end{small}
 \end{center}
\caption{Example 3: Convergence rate in $p$ of the FEM(S) and FEM(Q) in $H^1$--seminorm.} \label{p-rate}
 \end{table}

As we can see,  the convergence rate in $p$ for both FEM(S) and FEM(Q) are approximately $\mathcal{O}(p^{4/3})$. The convergence rate in $p$ is double  the theoretical rate  with respect to the Sobolev regularity of $u$. This is the \emph{doubling order} convergence in the $p$-version finite element, see \cite{Babuska-Suri:SINUM:1987,schwab} for details. The reason of this doubling order convergence in $p$ is related  to the fact that standard Sobolev space can not optimally characterize the singularity of $r^\gamma \log^\nu r$ type, $\gamma \in \mathbb{R}^+$, $\nu\in \mathbb{N}$; indeed from \cite{babuska2002direct_part1,babuska2002direct_part2,MR1754720}, we know that the modified Jacobi-weighted Besov spaces provide a sharper  function space setting  to characterize such  singular functions.  By using the results in \cite{babuska2002direct_part2}, FEM(Q)  has the following sharp error bound under $p$-refinement for this problem 
$$
c p^{-\frac{4}{3}}\leq |u-u_h|_{H^1(\Omega)}\leq C p^{-\frac{4}{3}}, 
$$
where  constants $C$ and $c$ are independent of $p$.  Moreover, we observe that  the FEM(S)  error is greater than FEM(Q)  error by a factor about $2.29$ for fixed $p$. By noting that $2^{4/3}\approx 2.52$, we  suppose that  the error bound for FEM(S) satisfying the following relation  
$$
|u-u_h|_{H^1(\Omega)}\leq \tilde{C} \Big( \frac{2}{p}\Big)^{\frac{4}{3}}, 
$$
The proof  the above intuitive optimal $p$-version error bound for FEM(S) is beyond the scope of this work, which depends on the  approximation theory for orthogonal projections onto the ${\cal P}_p$ basis  and the ${\cal S}_p$ basis in the  modified Jacobi-weighted Besov spaces. 

\begin{figure}[t]
\begin{center}
\begin{tabular}{cc}
\hspace{-0.8cm}
\includegraphics[width=0.5\linewidth]{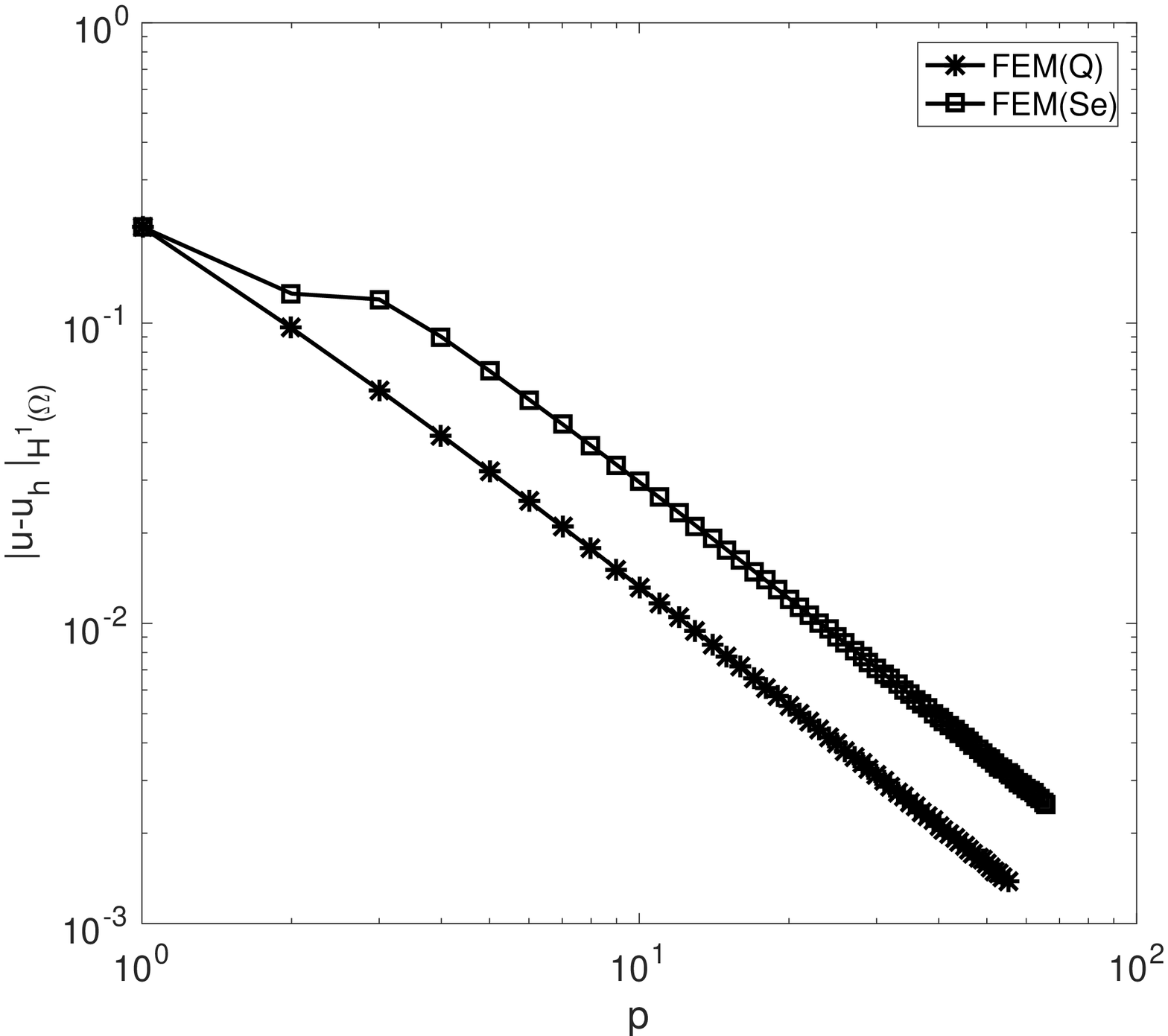} &
\hspace{-0.3cm}
\includegraphics[width=0.5\linewidth]{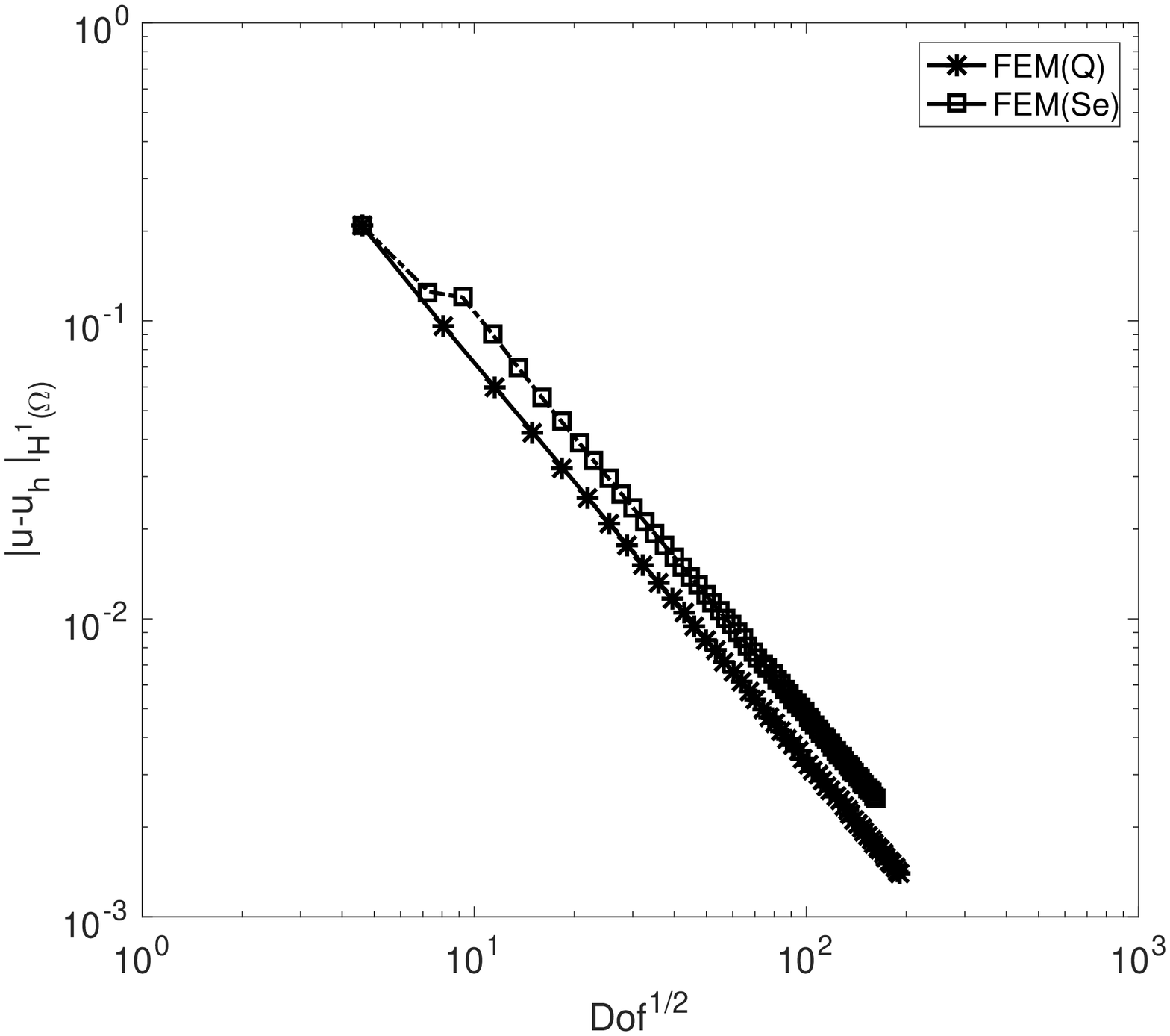} \\
\end{tabular}
\end{center}
\caption{Example 3: Convergence of the FEMs under $p$-refinement on $12$ uniform square elements. Error against $p$ (left);  Error against ${Dof}^{1/2}$ (right).}\label{ch7:Elliptic Lshape} 
\end{figure}

Now, let's consider the relation of the error against $\sqrt{Dof}$. By using the asymptotic relation \eqref{dof Q}, we have the following relation for FEM(Q)
$$
 |u-u_h|_{H^1(\Omega)}\leq C Dof^{-\frac{2}{3}}.
$$
Then, we derive the following relation for FEM(S) by  using the  relation \eqref{dof S},
$$
 |u-u_h|_{H^1(\Omega)}\leq \tilde{C} \Big(\frac{2}{\sqrt{2Dof}}\Big)^{\frac{4}{3}} 
 = \tilde{C}  \Big(\frac{2}{{Dof}}\Big)^{\frac{2}{3}}.
$$
Hence, both FEM(S) and FEM(Q) have the  same algebraic convergence rate in degrees of freedom under $p$-refinement. However, the error bound of FEM(S) seems to be  larger than the error bound of FEM(Q) by a constant.  This result is observed in the Figure \ref{ch7:Elliptic Lshape}, where we can see that the convergence line of FEM(S) and FEM(Q) have the same slope in error against $p$ (left) and error against ${Dof}^{1/2}$ (right). But the error of FEM(S) is always larger than the error of FEM(Q) in both graphs.  

\section{Conclusions and remarks}
In this work, we derived new $hp$-optimal approximation results for the $L^2$-orthogonal projection onto the total degree ${\cal P}_p$ basis, and the $H^1$-projection onto the serendipity basis ${\cal S}_p$ over tensor product elements.  With these results, we proved that  the exponent of the exponential rate of convergence with respect to the  number of degrees of freedom  for the  DGFEM employing  ${\cal P}_p$ basis and  the serendipity FEM  are greater than the exponent  of the exponential rate of convergence for their counterparts employing ${\cal Q}_p$ basis  for analytic functions under $p$-refinement. Moreover, the exponent for  the  ${\cal P}_p$ and ${\cal S}_p$ bases are   larger than that of ${\cal Q}_p$ basis by a constant only depending on dimension. The sharpness of the theoretical results has been verified by numerical examples. 

Finally, we remark on some applications and potential extensions of results in this   work. First, we note that the $hp$-optimal error bounds for the $L^2$-orthogonal projections onto the  ${\cal P}_p$ basis and $H^1$-projections onto the  ${\cal S}_p$ basis can be used to improve the $hp$-error bounds for  mixed-FEMs employing the ${\bf BDFM}$-elements and virtual element methods, see \cite{MR1974174} and \cite{MR3509090}. Next, as we have already observed in the numerical example $7.2$ in \cite{cangiani2013hp},  the $hp$-adaptive DGFEM employing the ${\cal P}_p$ basis gives a greater exponent of the  exponential rate of convergence in terms of number of degrees of freedom than the exponent of  DGFEM employing the  ${\cal Q}_p$ basis for solving the  L-shaped domain problem. We are very interested in  applying the results in this work for  $hp$-version FEMs employing the serendipity basis and DGFEMs employing the ${\cal P}_p$ basis  in a general $hp$-refinement setting. 
 

\begin{acknowledgement}
 The author wishes to express his gratitude to Emmanuil Georgoulis (University of Leicester \& National Technical University of Athens) and Andrea Cangiani (University of Leicester) for their helpful comments.  {Z. D. was supported by the Leverhulme Trust (grant no. RPG-2015-306).} 
\end{acknowledgement}

\bibliographystyle{plain} 
\bibliography{HPFEM}


\section*{Appendix. Technical results}
We present the details for  proving  that $T_{2,a} = \pi_p^{(1)}\pi_p^{(2)}u -  \pi_{{\cal S}_p}^{(1,2)}u$ in relation \eqref{trace term 3D}. The idea is that both of the $H^1$-projections $ \pi_p^{(1)}\pi_p^{(2)}u$  and  $\pi_{{\cal S}_p}^{(1,2)}u$ are independent of the variable $x_3$ for $u\in H^3(\hat{\k})$, $\hat{\k}=(-1,1)^3$. By recalling the definition of the $2$D $H^1$-projection $ \pi_p^{(1)}\pi_p^{(2)}u$ in \eqref{def:H^1 projector Q basis}, then we have the following relation   
\begin{align} \label{2D H1 Q for 3D function}
\pi_p^{(1)}\pi_p^{(2)}u (x_1,x_2,x_3)
&:= \sum_{i_1=0}^{p-1}\sum_{i_2=0}^{p-1} \tilde{a}_{i_1 i_2}(x_3)
\psi_{i_1} (x_1)\psi_{i_2}(x_2)  
+ \sum_{{i_1}=0}^{p-1} \tilde{b}_{{i_1}}(x_3)\psi_{i_1}(x_1) 
\nno \\
&\quad + \sum_{{i_2}=0}^{p-1} \tilde{c}_{{i_2}}(x_3)\psi_{i_2}(x_2)
 +u(-1,-1,x_3) .
\end{align}
with $\tilde{a}_{{i_1}{i_2}}(x_3)$, $\tilde{b}_{i_1}(x_3)$ and  $\tilde{c}_{i_2}(x_3)$ given by: 
{
\begin{align} \label{coefficients 2D in 3D}
\tilde{a}_{{i_1}{i_2}}(x_3)&= \frac{2{i_1}+1}{2}\frac{2{i_2}+1}{2}
\int_{-1}^1 \int_{-1}^1    \partial_1  \partial_2 u(x_1,x_2,x_3) L_{i_1}(x_1) L_{i_2}(x_2)\ud x_1 \ud x_2,  \nno \\
\tilde{b}_{i_1} (x_3)&= \frac{2{i_1}+1}{2} \int_{-1}^1    \partial_1  u(x_1,-1,x_3) L_{i_1}(x_1)\ud x_1, \nno  \\
\tilde{c}_{i_2} (x_3)&= \frac{2{i_2}+1}{2} \int_{-1}^1    \partial_2  u(-1,x_2,x_3) L_{i_2}(x_2)\ud x_2 .
\end{align}
The above coefficients in \eqref{coefficients 2D in 3D} all depend the variable $x_3$. We note that for function $u\in H^3(\hat{\k})$, the above expansion can be rewritten in following way, 
\begin{align} \label{2D H1 Q for 3D function part 2}
&\pi_p^{(1)}\pi_p^{(2)}u  (x_1,x_2,x_3) \nno \\
&= \sum_{i_1=0}^{p-1}\sum_{i_2=0}^{p-1} 
\Big(\int_{-1}^{x_3} \partial_3 \tilde{a}_{i_1 i_2}(x_3) \ud x_3 \Big)
\psi_{i_1} (x_1)\psi_{i_2}(x_2)   
+ \sum_{i_1=0}^{p-1}\sum_{i_2=0}^{p-1} 
 \tilde{a}_{i_1 i_2}(-1)
\psi_{i_1} (x_1)\psi_{i_2}(x_2)  
\nno \\
&  
+ \sum_{{i_1}=0}^{p-1} 
\Big(\int_{-1}^{x_3} \partial_3 \tilde{b}_{{i_1}}(x_3) \ud x_3\Big) \psi_{i_1}(x_1) 
+ 
\sum_{{i_1}=0}^{p-1} \tilde{b}_{{i_1}}(-1)\psi_{i_1}(x_1) 
\nno \\
& 
+\sum_{{i_2}=0}^{p-1} 
\Big(\int_{-1}^{x_3} \partial_3 \tilde{c}_{{i_2}}(x_3) \ud x_3\Big) \psi_{i_2}(x_2) 
+ 
\sum_{{i_2}=0}^{p-1} \tilde{c}_{{i_2}}(-1)\psi_{i_2}(x_2) \nno \\
& +  \int_{-1}^{x_3}  \partial_3 u(-1,-1,x_3) \ud x_3
+   u(-1,-1,-1) .
\end{align}
Next,  we expand above partial derivative of coefficients $\tilde{a}_{{i_1}{i_2}}(x_3)$, $\tilde{b}_{i_1}(x_3)$ and  $\tilde{c}_{i_2}(x_3)$ into Legendre  polynomials  $L_{i_3}(x_3)$,  and compare   coefficients in  \eqref{coefficients 2D in 3D}, \eqref{coefficients 3D} and \eqref{coefficients 3D part 2}, the following relation holds
\begin{align} \label{def:H^1 projector Q basis 2D}
& \pi_p^{(1)}\pi_p^{(2)}u (x_1,x_2,x_3) \nno \\
& = \sum_{i_1=0}^{p-1}\sum_{i_2=0}^{p-1} \sum_{i_3=0}^{\infty} a_{i_1 i_2i_3}\psi_{i_1} (x_1)\psi_{i_2}(x_2)\psi_{i_3}(x_3)  
 + \sum_{{i_1}=0}^{p-1} \sum_{{i_2}=0}^{p-1} b_{i_1i_2}\psi_{i_1}(x_1) \psi_{i_2}(x_2) \nno \\
&  +\sum_{{i_1}=0}^{p-1} \sum_{{i_3}=0}^{\infty} c_{i_1i_3}\psi_{i_1}(x_1) \psi_{i_3}(x_3)  
    +\sum_{{i_2}=0}^{p-1} \sum_{{i_3}=0}^{\infty} d_{i_2i_3}\psi_{i_2}(x_2) \psi_{i_3}(x_3) 
  + \sum_{i_1=0}^{p-1} e_{i_1} \psi_{i_1}(x_1) 
   \nno \\
    &  
    +  \sum_{i_2=0}^{p-1} f_{i_2} \psi_{i_2}(x_2)
  +  \sum_{i_3=0}^\infty g_{i_3} \psi_{i_3}(x_3)+u(-1,-1,-1).  
\end{align}
Similarly, by using the same techniques, in conjunction with  the definition of the  $2$D $H^1$-projection $\pi_{{\cal S}_p}^{(1,2)}u$ in  \eqref{def:H^1 projector S basis}.  The following relation holds for $\pi_{{\cal S}_p}^{(1,2)}u$
\begin{align} \label{def:H^1 projector S basis 2D}
&\pi_{{\cal S}_p}^{(1,2)}u (x_1,x_2,x_3) \nno \\
& =  \sum_{\substack {  {i_1}+{i_2} = 2 \\{i_1}\geq1, {i_2}\geq1 }} ^{p-2 }
\Big( \sum_{i_3=0}^{\infty} a_{i_1 i_2i_3}\psi_{i_1} (x_1)\psi_{i_2}(x_2)\psi_{i_3}(x_3) +b_{i_1i_2}\psi_{i_1}(x_1) \psi_{i_2}(x_2)  \Big)
 \nno  \\
 &+  \sum_{i_1=0}^{p-1} \Big(  \sum_{i_3=0}^\infty  a_{i_1 0i_3}\psi_{i_1} (x_1)\psi_{0}(x_2)\psi_{i_3}(x_3) +b_{i_10}\psi_{i_1}(x_1) \psi_{0}(x_2) \Big) \nno \\
  &   +  \sum_{i_2=1}^{p-1} \Big(  \sum_{i_3=0}^\infty  a_{0i_2i_3}\psi_{0} (x_1)\psi_{i_2}(x_2)\psi_{i_3}(x_3) +b_{0 i_2}  \psi_{0}(x_1) \psi_{i_2}(x_2)  \Big) \nno \\
&  +\sum_{{i_1}=0}^{p-1} \sum_{{i_3}=0}^{\infty} c_{i_1i_3}\psi_{i_1}(x_1) \psi_{i_3}(x_3) 
  +\sum_{{i_2}=0}^{p-1} \sum_{{i_3}=0}^{\infty} d_{i_2i_3}\psi_{i_2}(x_2) \psi_{i_3}(x_3)  
 \nno \\
 &
 + \sum_{i_1=0}^{p-1} e_{i_1} \psi_{i_1}(x_1)
 +  \sum_{i_2=0}^{p-1} f_{i_2} \psi_{i_2}(x_2)
  +  \sum_{i_3=0}^\infty g_{i_3} \psi_{i_3}(x_3)+u(-1,-1,-1). 
\end{align}
By combining above two relation \eqref{def:H^1 projector Q basis 2D} and \eqref{def:H^1 projector S basis 2D}, we derive the desired relation,
\begin{align} \label{Result for T2a}
&(\pi_p^{(1)}\pi_p^{(2)}u -\pi_{{\cal S}_p}^{(1,2)}u) (x_1,x_2,x_3) \nno \\
&= \sum_{\substack {{i_1}+{i_2} = p-1  \\p-1 \geq {i_1}\geq1, p-1 \geq {i_2}\geq1 }} ^{2(p-1)}
\Big( \sum_{i_3=0}^{\infty} a_{i_1 i_2i_3}\psi_{i_1} (x_1)\psi_{i_2}(x_2)\psi_{i_3}(x_3)  +b_{i_1i_2}\psi_{i_1}(x_1) \psi_{i_2}(x_2)  \Big) \nno \\
&= T_{2,a}.
\end{align}
The proof is complete.

\end{document}